\documentclass[11pt,a4paper]{amsart}

\usepackage{amsmath}
\usepackage{amsfonts}
\usepackage{graphicx}
\usepackage{framed}
\usepackage{amssymb}
\usepackage{esvect}
\usepackage{xcolor}

\makeatletter
\@namedef{subjclassname@2020}{\textup{2020} Mathematics Subject Classification}
\makeatother

\usepackage{etex}
\usepackage{latexsym}
\usepackage[english]{babel}
\usepackage[utf8x]{inputenc}

\def \N {\mathbb{N}}
\def \R {\mathbb{R}}

\def \de {\partial}
\def \LL {\mathcal{L}}

\def \BB {\mathcal{B}}
\def \DD {\mathcal{D}}

\usepackage{amsthm}
\theoremstyle{definition}
\newtheorem{definition}{Definition}[section]

\newtheorem{remark}[definition]{Remark}

\theoremstyle{plain}

\newtheorem{theorem}[definition]{Theorem}

\newtheorem{lemma}[definition]{Lemma}

\newtheorem{problem}[definition]{Problem}

\numberwithin{equation}{section}
\textwidth = 13.7cm
\begin{document}
 \begin{abstract}
 We consider the first Dirichlet eigenvalue problem
 for a mixed local/nonlocal elliptic operator and we establish
 a quantitative Faber-Krahn inequality. More precisely, we show that balls
 minimize the first eigenvalue among sets of given volume and we provide
 a stability result for sets that almost attain the minimum.
 \end{abstract}
 \title[Faber-Krahn for
 mixed operators]{A Faber-Krahn inequality \\ for mixed local and nonlocal operators}
 
 \author[S.\,Biagi]{Stefano Biagi}
 \author[S.\,Dipierro]{Serena Dipierro}
 \author[E.\,Valdinoci]{Enrico Valdinoci}
 \author[E.\,Vecchi]{Eugenio Vecchi}
 
 \address[S.\,Biagi]{Dipartimento di Matematica
 \newline\indent Politecnico di Milano \newline\indent
 Via Bonardi 9, 20133 Milano, Italy}
 \email{stefano.biagi@polimi.it}
 
 \address[S.\,Dipierro]{Department of Mathematics and Statistics
 \newline\indent University of Western Australia \newline\indent
 35 Stirling Highway, WA 6009 Crawley, Australia}
 \email{serena.dipierro@uwa.edu.au}
 
 \address[E.\,Valdinoci]{Department of Mathematics and Statistics
 \newline\indent University of Western Australia \newline\indent
 35 Stirling Highway, WA 6009 Crawley, Australia}
 \email{enrico.valdinoci@uwa.edu.au}
 
 \address[E.\,Vecchi]{Dipartimento di Matematica
 \newline\indent Universit\`a degli Studi di Bologna \newline\indent
Piazza di Porta San Donato 5, 40126 Bologna, Italy}
 \email{eugenio.vecchi2@unibo.it}

\subjclass[2020]
{49Q10, 35R11, 47A75, 49R05}

\keywords{Operators of mixed order, first eigenvalue, shape optimization, isoperimetric inequality,
Faber-Krahn inequality, quantitative results, stability.}

\thanks{The authors are members of INdAM. S. Biagi
is partially supported by the INdAM-GNAMPA project 
\emph{Metodi topologici per problemi al contorno associati a certe 
classi di equazioni alle derivate parziali}.
S. Dipierro and E. Valdinoci are members of AustMS.
S. Dipierro is supported by
the Australian Research Council DECRA DE180100957
``PDEs, free boundaries and applications''.
E. Valdinoci is supported by the Australian Laureate Fellowship
FL190100081
``Minimal surfaces, free boundaries and partial differential equations''.
E. Vecchi is partially supported
by the INdAM-GNAMPA project 
\emph{Convergenze variazionali per funzionali e operatori dipendenti da campi vettoriali}.}

 \date{\today}
 
 \maketitle
 
 \section{Introduction}
 
 At the end of the XIX century, relying on explicit calculations on suitable domains, John William Strutt,
 3rd Baron Rayleigh, conjectured that the ball is the minimizer of the first Dirichlet eigenvalue among
 the domains of a given volume, see~\cite{RAY}.
The confirmation of this conjecture entails a number of interesting physical consequences, such as:
\begin{itemize}
\item among all drums of a given surface, the circular drum produces the lowest voice,
\item among all the regions of a given volume
with the boundary maintained at a constant (say, zero) temperature, the one which dissipates heat at the slowest possible rate is the sphere.
\end{itemize}
 Also, the statement with equal volume constraints gives as a byproduct the one with equal perimeter 
 constraint (thanks to the scaling property of the first ei\-gen\-va\-lue and the isoperimetric 
 inequality).
 In this sense, the first attempt to prove
 Lord Rayleigh's conjecture dates back to 1918, when Richard Courant established the above claim with equal 
 perimeter constraint,
 see~\cite{COURANT}. Then, using rearrangement methods and the variational characterization of eigenvalues, 
 the original conjecture with volume constraint was established 
 independently by Georg Faber and Edgar Krahn,
 see~\cite{FABER, KRAHN1, KRAHN2}. See also~\cite[Chapter~2]{HENROT} and \cite{Kes}. We refer to 
 \cite{CupVec} for similar results in the context of composite membranes.

 Given that balls are actually established to be the unique minimizers for the first eigenvalue under 
 volume constraint
 (hence if the first eigenvalue is equal to that of the corresponding ball, then the domain must 
 necessarily be a ball), an intense research activity focused on quantitative versions of the Faber-Krahn 
 inequality: roughly speaking,
 if the eigenvalue is ``close to the one of the ball'', can one deduce 
 that the domain is also ``close to a ball''?
 Classical results in this direction have been obtained by Wolfhard Hansen 
 and Nikolai  Nadirashvili in~\cite{HANSEN}
 and Antonios Melas in~\cite{Melas}, and sharp bounds in terms of the so called Fraenkel asymmetry have 
 been obtained recently by Lorenzo Brasco, Guido De Philippis and Bozhidar Velichkov in~\cite{BRASCO}.
 See also \cite{Avila} for some stability results in space forms.
 \medskip

 The goal of this paper is to obtain a Faber-Krahn inequality and a quantitative
 version of it for an elliptic operator of mixed order. More specifically, for the sake of simplicity,
 we will focus on operators obtained by the superposition
 of a classical and a fractional Laplacian, namely of operators of the form
 $$ \LL :=-\Delta +(-\Delta)^s ,$$
 with~$s\in(0,1)$ and
 $$ (-\Delta)^s u(x):= \frac12\int_{\R^n}\frac{2u(x)-u(x+y)-u(x-y)}{|y|^{n+2s}}\,d y.$$
 Operators of this type present interesting mathematical
 questions, especially due to the lack of scale invariance and in view of the combination
 of local and nonlocal behaviors, see~\cite{JAKO1, JAKO2, BARLES, LLAVE, BISWAP, CIOMAGA, BAR1, CAFFA,
 BAR2, ROSOT, CAB, TESO, VESPRI, PATA, BDVV2, BDVV, PRO, CABRE, ABA, BSM, SilvaSalort, Kinnunen}.
 Moreover, they possess a concrete interest in applications
 since they model diffusion patterns with different time scales
 (loosely speaking,
 the higher order operator leading the diffusion for small times
 and
 the lower order operator becoming predominant for large times)
 and they arise in bi-modal power-law distribution processes,
 see~\cite{PAG}. Further applications arise in the theory of optimal searching strategies, biomathematics
 and animal foraging, see~\cite{PROIETTI, NICHE} and the references therein.
 See also~\cite{CAST} for further applications.
 \medskip

 In our setting, 
 given a bounded open subset~$\Omega$ of~$\R^n$, we consider
 the first Dirichlet eigenvalue~$
 \lambda_\LL(\Omega)$ (see Section~\ref{sec.firstEig} for a detailed presentation) and we characterize
 the optimal set by the following result:
 
 \begin{theorem}[Faber-Krahn inequality for $\lambda_\LL(\Omega)$] \label{thm.FK}
  Let~$\Omega\subseteq\R^n$ be a 
   bounded open set with boundary $\de\Omega$ of class $C^1$.
   Let $m:= |\Omega|\in (0,\infty)$, and let $B^{(m)}$ be any Euclidean ball with 
   volume
   $m$. Then,
   \begin{equation} \label{eq.FK}
    \lambda_\LL(\Omega)\geq \lambda_\LL (B^{(m)}).
   \end{equation}
   Moreover, if the equality holds in \eqref{eq.FK}, then 
   $\Omega$ is a ball.
  \end{theorem}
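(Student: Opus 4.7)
The plan is to combine the variational formulation of $\lambda_\LL(\Omega)$ (expected to be introduced in Section~\ref{sec.firstEig} as the infimum of a suitable Rayleigh quotient over $H^1_0(\Omega)$) with classical symmetrization arguments. Let $u$ denote a positive first eigenfunction on $\Omega$, extended by zero to $\R^n$, and let $u^\ast$ be its Schwarz symmetric decreasing rearrangement, which is supported in a ball $B^{(m)}$ of volume $m=|\Omega|$ centered at the origin.

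To establish~\eqref{eq.FK}, I would test the Rayleigh quotient for $B^{(m)}$ with $u^\ast$. Three classical facts then conclude the argument: equimeasurability gives $\|u^\ast\|_{L^2}=\|u\|_{L^2}$; the Pólya--Szegő inequality yields $\int_{\R^n}|\nabla u^\ast|^2\,dx\le \int_{\R^n}|\nabla u|^2\,dx$; and the analogous nonlocal inequality, which follows from Riesz's rearrangement inequality applied to the singular kernel $|x-y|^{-n-2s}$ (or equivalently from Almgren--Lieb), provides
$$\iint_{\R^n\times\R^n}\frac{(u^\ast(x)-u^\ast(y))^2}{|x-y|^{n+2s}}\,dx\,dy \;\le\; \iint_{\R^n\times\R^n}\frac{(u(x)-u(y))^2}{|x-y|^{n+2s}}\,dx\,dy.$$
Adding up the two energy inequalities, dividing by $\|u\|_{L^2}^2=\|u^\ast\|_{L^2}^2$, and taking the infimum in the definition of $\lambda_\LL(B^{(m)})$ immediately yields $\lambda_\LL(B^{(m)})\le \lambda_\LL(\Omega)$.

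For the rigidity part, assume that equality holds in~\eqref{eq.FK}. Then $u^\ast$ must be a minimizer of the Rayleigh quotient on $B^{(m)}$, which forces equality in both the local and the nonlocal Pólya--Szegő inequalities applied to $u$. I would then invoke the Brothers--Ziemer rigidity theorem for the classical Pólya--Szegő inequality: provided one knows that $u\in C^1(\Omega)$, that $u>0$ inside $\Omega$, and that the critical set $\{x\in\Omega : \nabla u(x)=0,\; 0<u(x)<\max u\}$ has Lebesgue measure zero, equality in the classical Pólya--Szegő inequality forces $u$ to coincide, up to a translation, with $u^\ast$. Consequently the support of $u$, which is $\Omega$, must be a translate of $B^{(m)}$, as desired.

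The main obstacle will be the rigidity step, and in particular verifying the hypotheses of the Brothers--Ziemer theorem in our mixed local/nonlocal framework: one needs interior $C^1$ regularity, strict positivity of the eigenfunction (a Hopf-type statement), and the fact that the level sets of $u$ have zero Lebesgue measure. None of these are automatic for eigenfunctions of $\LL$, and they must be drawn from the regularity and maximum principle theory for mixed operators available in the references cited in the introduction; the $C^1$ assumption on $\partial\Omega$ in the hypothesis is presumably used precisely to ensure that $u$ vanishes on $\partial\Omega$ with the required regularity for these tools to apply.
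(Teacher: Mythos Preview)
Your argument for the inequality \eqref{eq.FK} is exactly the paper's: test the Rayleigh quotient on $B^{(m)}$ with the Schwarz rearrangement $u_0^\ast$, use the classical P\'olya--Szeg\H{o} inequality for the gradient term and its nonlocal analogue (the paper cites Frank--Seiringer \cite[Theorem~A.1]{FS}) for the Gagliardo seminorm, and conclude.

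For the rigidity statement the paper takes a different, and simpler, route. Rather than invoking Brothers--Ziemer on the local part, the paper observes that equality in \eqref{eq.FK} forces equality in the \emph{nonlocal} P\'olya--Szeg\H{o} inequality, and then appeals again to \cite[Theorem~A.1]{FS}: the equality case there already asserts that $u_0$ is, up to a translation, proportional to a symmetric decreasing function. This immediately gives that $\Omega=\{u_0>0\}$ is a ball, with no need to verify interior $C^1$ regularity, strict positivity, or smallness of the critical set---precisely the obstacles you flagged as the main difficulty. Your Brothers--Ziemer strategy could in principle be carried out (note that the relevant nondegeneracy hypothesis is on $u^\ast$, not on $u$; since $u^\ast$ would then coincide with the principal eigenfunction on the smooth ball $B^{(m)}$, one could extract the needed properties from Theorems~\ref{thm:W2pBL} and~\ref{thm:Hopf}), but it is considerably more involved and, under the mere $C^1$ assumption on $\partial\Omega$, the regularity you propose to check for $u$ itself is not available from the paper's results. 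The nonlocal rigidity bypasses all of this.
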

  
  A related Faber-Krahn inequality has been recently obtained for radially symmetric, nonnegative and continuous kernels with compact support in~\cite{GOE}.
  With this respect, the case treated here of singular kernels seems
  to be new to the best of our knowledge. Additionally, 
 and more importantly,
  we establish a stability result for inequality~\eqref{eq.FK}:
  
 \begin{theorem}[Quantitative Faber-Krahn inequality for $\lambda_\LL(\Omega)$]  \label{thm:mainFK} Let~$s\in(0,1/2)$.
  Let $\Omega \subset \mathbb{R}^n$ be an open, bounded 
  and uniformly convex set with boundary $\partial \Omega$ of class $C^3$.

  Then, there exists $\varepsilon_0 > 0$ with the following property:
   if $B\subseteq \R^n$ is a ball with~$|B|=|\Omega|$ and
   \begin{equation}\label{eq:AssumptionFK}
   \lambda_{\LL}(\Omega) \leq (1+\varepsilon ) \lambda_{\LL}(B), 
   \quad \textrm{for some } 0< \varepsilon <\varepsilon_0,
   \end{equation}
   then there exist
   two balls $B^{(1)}$, $B^{(2)} \subset \mathbb{R}^n$ such that 
   $B^{(1)} \subseteq \Omega \subseteq B^{(2)}$ and
   \begin{equation}\label{eq:Goal1}
   |B^{(1)}| \geq \left(1-C \varepsilon^{\frac1{2n}}\right)|\Omega|, 
\end{equation}
and
\begin{equation}\label{eq:Goal2}   
   |\Omega| \geq \left(1-C \varepsilon^{\frac1{n(n+1)}}\right)|B^{(2)}|,
   \end{equation}
where $C>0$ is a structural constant.
  \end{theorem}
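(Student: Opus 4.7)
The plan is a two-step reduction. First, I convert the smallness condition~\eqref{eq:AssumptionFK} on the mixed eigenvalue into a smallness estimate on the Fraenkel asymmetry of~$\Omega$, following the strategy behind the classical quantitative Faber--Krahn inequality of Brasco--De Philippis--Velichkov \cite{BRASCO}. Then I upgrade this measure-theoretic closeness to the inner/outer ball estimates \eqref{eq:Goal1}--\eqref{eq:Goal2} using the uniform convexity together with the $C^3$ regularity of~$\de\Omega$.

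\medskip

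\noindent\emph{Step 1: from the eigenvalue deficit to a Fraenkel bound.} Let $u$ denote the first $\LL$-eigenfunction on~$\Omega$, normalized in $L^2$, and let $u^*$ be its Schwarz symmetric decreasing rearrangement on~$B$. Since $u^*$ is an admissible competitor for~$\lambda_{\LL}(B)$, the Polya--Szego inequality for the gradient and the Almgren--Lieb rearrangement inequality for the Gagliardo seminorm give the chain
\[
\lambda_\LL(B) \le \int_B |\nabla u^*|^2\,dx + [u^*]_s^2 \le \int_\Omega |\nabla u|^2\,dx + [u]_s^2 = \lambda_\LL(\Omega) \le (1+\varepsilon)\lambda_\LL(B),
\]
so that each of the two non-negative rearrangement defects is bounded by $\varepsilon\,\lambda_\LL(B)$. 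Applying the sharp quantitative Polya--Szego inequality (cf.~\cite{BRASCO} and the references therein) to the positive eigenfunction~$u$, whose critical set is negligible and whose boundary regularity follows from the theory developed in \cite{BDVV}, and propagating the estimate to the superlevel set $\{u>0\}=\Omega$, yields the asymmetry bound
\[
\mathcal{A}(\Omega) := \inf_{|B_0|=|\Omega|}\frac{|\Omega\,\triangle\, B_0|}{|\Omega|} \le C\,\varepsilon^{1/2}.
\]

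\medskip

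\noindent\emph{Step 2: from asymmetry to inscribed/circumscribed balls.} Let $B_0=B(x_0,R)$ be a ball of volume $|\Omega|$ realizing the asymmetry, so that $|B_0\setminus\Omega|=|\Omega\setminus B_0|\lesssim \varepsilon^{1/2}|\Omega|$. For the inscribed ball $B^{(1)}$: if the concentric ball $B(x_0,R-\eta)$ fails to lie inside~$\Omega$, the convexity of~$\Omega$ supplies a supporting hyperplane at distance at most $R-\eta$ from~$x_0$, and the corresponding spherical cap of~$B_0$, of height at least~$\eta$, must lie in $B_0\setminus\Omega$; an elementary lower bound on the cap volume, matched against the asymmetry, forces $\eta \lesssim \varepsilon^{1/(2n)}R$, whence~\eqref{eq:Goal1}. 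For the circumscribed ball~$B^{(2)}$: a reciprocal argument controls the maximum protrusion of~$\Omega$ beyond~$\de B_0$. Once~\eqref{eq:Goal1} is at hand, convex-hull considerations and the curvature bounds coming from the $C^3$ regularity of~$\de\Omega$ yield a polynomial lower bound on the excess volume $|\Omega\setminus B_0|$ in terms of the protrusion depth~$\ell$, and matching against the asymmetry gives $\ell\lesssim \varepsilon^{1/(n(n+1))}R$ and hence~\eqref{eq:Goal2}.

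\medskip

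\noindent\emph{Main obstacle.} The technical heart of the argument is Step~1: extracting a classical Polya--Szego deficit at the rate~$\varepsilon^{1/2}$ from the mixed eigenvalue deficit. Since the nonlocal term~$[u]_s^2$ is comparable to~$\int_\Omega|\nabla u|^2$ rather than being of lower order, the rearrangement comparison must be tight for both contributions simultaneously, and one must verify positivity, non-degeneracy and sufficient boundary regularity of the first $\LL$-eigenfunction to apply the sharp Polya--Szego machinery. The restriction $s\in(0,1/2)$ enters here to ensure that $H^1_0$-functions extend to~$\R^n$ by zero without spoiling the Gagliardo seminorm control, so that the Almgren--Lieb rearrangement inequality for~$[\,\cdot\,]_s$ can be applied cleanly alongside the classical Polya--Szego inequality.
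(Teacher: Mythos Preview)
Your Step~1 contains a real gap. There is no black-box ``sharp quantitative Polya--Szeg\H{o} inequality'' that takes the gradient deficit
\[
\int_\Omega |\nabla u|^2 - \int_B |\nabla u^*|^2 \le \varepsilon\,\lambda_\LL(B)
\]
and returns $\mathcal{A}(\Omega)\le C\varepsilon^{1/2}$. The result of \cite{BRASCO} does \emph{not} proceed via a quantitative Polya--Szeg\H{o} estimate: it uses a selection principle together with free-boundary regularity for a penalized problem, and that machinery is tied to the fact that the first $-\Delta$-eigenfunction is the object being rearranged. Here $u$ is the $\LL$-eigenfunction, and $\int_\Omega|\nabla u|^2$ is \emph{not} $\lambda_1(\Omega)$, so you cannot simply feed the deficit into \cite{BRASCO}. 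The available quantitative Polya--Szeg\H{o} results (Cianchi--Fusco type) control the asymmetry of level sets $\{u>t\}$ for generic $t$, or the distance of $u$ to a translate of $u^*$; pushing such information down to the level $t=0$, i.e.\ to $\Omega=\{u>0\}$, is exactly the delicate step you wave away with ``propagating the estimate.'' You would also need to know that the critical set of the mixed eigenfunction is negligible, which is not established anywhere.

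Your diagnosis of the role of $s<1/2$ is also off. Zero extension of $H^1_0(\Omega)$-functions lies in $H^1(\R^n)\hookrightarrow H^s(\R^n)$ for every $s\in(0,1)$, so the Gagliardo seminorm is always under control and the rearrangement inequality of Frank--Seiringer applies regardless. In the paper, $s<1/2$ is needed for $C^{2,\alpha}$ regularity of $u_0$ up to $\de\Omega$ (Theorem~\ref{thm:solvabLL}), which in turn feeds into Lemma~\ref{lem:convexity}: the superlevel sets $\Omega_\delta=\{u_0>\delta\}$ are uniformly convex with $C^1$ boundary for $\delta$ small.

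The paper's actual proof follows Melas's strategy rather than \cite{BRASCO}. Using the coarea formula, the eigenvalue deficit controls an integral of isoperimetric deficits of the level sets $\Gamma(t)=\de\Omega_t$; Lemma~\ref{lem.Melas} (which uses the Faber--Krahn inequality on the convex $C^1$ set $\Omega_\delta$, hence the need for Lemma~\ref{lem:convexity}) then guarantees enough mass for small $t$ so that one can select a level $\tau\le\varepsilon^{1/2}$ with
\[
\mathcal{H}^{n-1}(\de\Omega_\tau)\le n|B_1|^{1/n}|\Omega_\tau|^{1-1/n}+C\varepsilon^{1/2}.
\]
Since $\Omega_\tau$ is convex, the Bonnesen-type inequality~\eqref{eq:Bonnesen} converts this into the inscribed-ball estimate~\eqref{eq:Goal1}, and the geometric Lemma~\ref{ADDI} (with its sharp exponent $2/(n+1)$) then yields the circumscribed ball~\eqref{eq:Goal2}. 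Your Step~2 is in the right spirit for the outer ball and is close to Lemma~\ref{ADDI}, but your inner-ball argument via spherical caps, even if granted Step~1, would produce the exponent $1/(n+1)$ rather than $1/(2n)$ in~\eqref{eq:Goal1}.
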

 The proof of Theorem~\ref{thm.FK} is based on the definition of principal eigenvalue of~$\LL$ and
 on the Polya-Szeg\"o inequality. 

 The proof of Theorem~\ref{thm:mainFK} is more involved and requires some delicate geometric arguments. 
 For this, a careful analysis of the superlevel sets of the principal eigenfunction is needed, combined
 with the use of a Bonnesen-type inequality, which is a strengthening of the classical isoperimetric 
 inequality.

 We stress that, while in Theorem~\ref{thm.FK} we only require the boundary of~$\Omega$ to be~$C^1$, in
 Theorem~\ref{thm:mainFK} we need to ask that the boundary of~$\Omega$ is of class~$C^3$. This is due to  
 the
 fact that, in order to prove Theorem~\ref{thm:mainFK}, we have to use the Faber-Krahn inequality
 in~\eqref{eq.FK} for the superlevel sets of the principal eigenfunction and we are able to prove that 
 these sets
 are convex and with boundary of class~$C^1$ under the condition that~$\Omega$ has a~$C^3$-boundary,
 see the forthcoming Lemma~\ref{lem:convexity}. 
 \medskip

 The rest of this paper is organized as follows. In Section~\ref{sec.main} we introduce the basic notation  
 and the
 setting in which we work, and we provide some regularity results 
 and an Hopf-type lemma for the operator~$\LL$.
 In Section~\ref{sec.firstEig} we introduce the notion of principal Dirichlet eigenvalue for~$\LL$ and we 
 give
 some regularity results for the associated eigenfunction. 
 Section~\ref{sec.firstEig} also contains a result (namely, Lemma~\ref{lem:convexity})
 that proves the convexity of the superlevel sets of the first eigenfunction
 near the boundary of a convex domain: this
 result, based on a detailed use of the 
 Inverse Function Theorem, highlights
 an interesting technical difference
 with respect to the classical case where
 one can exploit the convexity of all the level sets of the Dirichlet principal
 eigenfunction, due to its concavity as a function (which is unknown
 in the fractional case, see~\cite{KASI}).
 
 Section~\ref{pr:sec} contains the proofs
 of Theorems~\ref{thm.FK} and~\ref{thm:mainFK}.
 The paper finishes with a couple of appendices: in the first one, we discuss the optimality of a geometric 
 lemma needed in the proof of Theorem~\ref{thm:mainFK}, while in the second we prove Theorem  
 \ref{thm:solvabLL}.
 
 \section{Basic notions and preliminary results} \label{sec.main}
 In this section we properly introduce the relevant definitions and notation
 which shall be used throughout the rest of the paper. Moreover, 
 we review/establish some regularity results concerning our operator $\LL
 = -\Delta+(-\Delta)^s$.
 \medskip
 
 Let $\Omega\subseteq\R^n$ be a bounded open set with boundary
 $\de\Omega$ of class $C^{1}$. 
 We then consider the space $\mathbb{X}(\Omega)$ defined as follows:
 $$\mathbb{X}(\Omega) :=
 \big\{u\in H^1(\R^n):\,\text{$u\equiv 0$ a.e.\,on $\R^n\setminus\Omega$}\big\}.$$
 In view of the regularity of $\de\Omega$, it is well-known that
 $\mathbb{X}(\Omega)$ can be naturally i\-den\-ti\-fied with 
 $H_0^1(\Omega)$; more precisely, we have (see, e.g., \cite[Prop.\,9.18]{Brezis})
 \begin{equation} \label{eq.indetifyXHzero}
 \text{$u\in\mathbb{X}(\Omega)\,\,\Longrightarrow\,\,u|_\Omega\in H_0^1(\Omega)$}
 \quad\text{and}\quad
 \text{$u\in H_0^1(\Omega)\,\,\Longrightarrow\,\,u\cdot\mathbf{1}_\Omega\in \mathbb{X}(\Omega)$},
 \end{equation}
 where $\mathbf{1}_\Omega$ denotes the indicator function of $\Omega$.
 Throughout what follows, 
 we tacitly identify a function $u\in H_0^1(\Omega)$ with its
 ``zero-extension'' $\widehat{u} = u\cdot\mathbf{1}_{\Omega}\in \mathbb{X}(\Omega)$.
 
 We then observe that, as a consequence of \eqref{eq.indetifyXHzero}, 
 the set
 $\mathbb{X}(\Omega)$ is endowed with a
 structure of real Hilbert space by the scalar product 
 \begin{equation*}
 \langle u, v\rangle_{\mathbb{X}(\Omega)}
  := \int_\Omega \langle \nabla u, \nabla v\rangle\,d x.
  \end{equation*}
 The norm associated with this scalar product is
 \begin{equation*}
\|u\|_{\mathbb{X}(\Omega)} := \big(\int_\Omega|\nabla u|^2\,d x\big)^{1/2},
  \end{equation*}
 and the (linear) map $\mathcal{E}_0:H_0^1(\Omega)\to\mathbb{X}(\Omega)$ defined by
 $$\mathcal{E}_0(u) := u\cdot\mathbf{1}_\Omega$$
 turns out to be a bijective isometry between $H_0^1(\Omega)$ and $\mathbb{X}(\Omega)$.
 \medskip
 
 Let~$s\in (0,1)$. On the space
 $\mathbb{X}(\Omega)$, we consider the bilinear form
 $$\mathcal{B}_{\Omega,s}(u,v) :=
 \int_{\Omega}\langle \nabla u,\nabla v\rangle\,d x
 + \iint_{\R^{2n}}\frac{(u(x)-u(y))(v(x)-v(y))}{|x-y|^{n+2s}}\, dx\, dy;$$
 moreover, for every $u\in \mathbb{X}(\Omega)$ we define
 \begin{equation} \label{eq.defquadD}
  \mathcal{D}_{\Omega,s}(u) := \mathcal{B}_{\Omega,s}(u,u).
 \end{equation}
 \begin{remark} \label{rem.welldef}
  We explicitly notice that $\mathcal{B}_{\Omega,s}$ is well-defined
  on $\mathbb{X}(\Omega)$ in view of the following facts:
  given any $u,v\in \mathbb{X}(\Omega)$, by H\"older's inequality we have
  \begin{equation} \label{eq.estimHolder}
    \iint_{\R^{2n}}\frac{|u(x)-u(y)|\cdot |v(x)-v(y)|}{|x-y|^{n+2s}}\, d x\, dy
   \leq [u]_s\cdot [v]_s,
  \end{equation}
  where we have used the notation
  $$[f]_s := \bigg(\iint_{\R^{2n}}\frac{|f(x)-f(y)|^2}{|x-y|^{n+2s}}\, d x\, dy\bigg)^{1/2}
  \qquad \text{for all~$f\in H^1(\R^n)$}.$$
  Furthermore, for every  $f\in H^1(\R^n)$ one has
  (see, e.g., \cite[Proposition 2.2]{guida})
  \begin{equation} \label{eq.estimsW1p}
   [f]_s \leq c_{n,s}\,\|f\|_{H^1(\R^n)}.
   \end{equation}
  Gathering together \eqref{eq.estimHolder} and \eqref{eq.estimsW1p}, we then get
  $$
  |\mathcal{B}_{\Omega,s}(u,v)|\leq c_{n,s}^2\,\|u\|_{H^1(\R^n)}\cdot \|v\|_{H^1(\R^n)}
  < \infty.$$
 \end{remark}
 
 Using the bilinear form $\BB_{\Omega,s}$, we can give the following definition.
 \begin{definition} \label{def.Weak}
 Let $f\in L^2(\Omega)$. We say that a function $u:\R^n\to\R$
 is a \emph{weak so\-lu\-tion} of the $\LL$-Dirichlet problem
 $$(\mathrm{D})_{f}\qquad\quad
 \begin{cases}
 \LL u = f & \text{in $\Omega$}, \\
 u \equiv 0 & \text{in $\R^n\setminus\Omega$},
 \end{cases}$$
 if it satisfies the following properties:
 \begin{enumerate}
  \item $u\in\mathbb{X}(\Omega)$;
  \item for every test function $\varphi\in C_0^\infty(\Omega)$, one has
 $$\mathcal{B}_{\Omega,s}(u,\varphi) = \int_{\Omega}f\varphi\, dx.$$
 \end{enumerate}
 \end{definition}
 
 \begin{remark} \label{rem.density}
  Let $f\in L^2(\Omega)$ and $u\in\mathbb{X}(\Omega)$. Since
  $C_0^\infty(\Omega)$ is \emph{dense} in $\mathbb{X}(\Omega)$, we see that $u$ is a weak
  solution of $(\mathrm{D})_{f}$ \emph{if and only if}
  $$\mathcal{B}_{\Omega,s}(u,v) = \int_{\Omega}f\,v\, dx\qquad\text{for every $v\in\mathbb{X}(\Omega)$}.$$
 \end{remark}
 
 Then, by applying the Lax-Milgram Theorem to the bilinear form $\BB_{\Omega,s}$,
one can prove the following existence result (see, e.g., \cite[Theorem.\,1.1]{BDVV}).

 \begin{theorem} \label{thm:existenceBasic}
  For every $f\in L^2(\Omega)$, there exists a \emph{unique weak solution}~$u_f\in \mathbb{X}(\Omega)$ of $(\mathrm{D})_{f}$, further
  satisfying the `a-priori' estimate
  $$\|u_f\|_{\mathbb{X}(\Omega)} \leq \mathbf{c}_0\,\|f\|_{L^2(\Omega)}.$$
  Here, $\mathbf{c}_0 > 0$ is a constant \emph{independent of $f$}.  
 \end{theorem}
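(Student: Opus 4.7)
The plan is to verify the hypotheses of the Lax--Milgram Theorem for the bilinear form $\mathcal{B}_{\Omega,s}$ on the Hilbert space $\mathbb{X}(\Omega)$, as suggested by the statement itself. First I would check continuity: by Remark~\ref{rem.welldef}, one already has the bound $|\mathcal{B}_{\Omega,s}(u,v)|\le c_{n,s}^{2}\,\|u\|_{H^{1}(\R^{n})}\|v\|_{H^{1}(\R^{n})}$. Since every $u\in\mathbb{X}(\Omega)$ vanishes outside $\Omega$, we have $\int_{\R^{n}}|\nabla u|^{2}\,dx=\|u\|_{\mathbb{X}(\Omega)}^{2}$, and the classical Poincar\'e inequality on the bounded set $\Omega$ gives $\int_{\R^{n}}|u|^{2}\,dx\le C_{P}\|u\|_{\mathbb{X}(\Omega)}^{2}$. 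Combining these two facts, $\|u\|_{H^{1}(\R^{n})}^{2}\le(1+C_{P})\,\|u\|_{\mathbb{X}(\Omega)}^{2}$, which upgrades Remark~\ref{rem.welldef} to continuity of $\mathcal{B}_{\Omega,s}$ with respect to the intrinsic norm of $\mathbb{X}(\Omega)$.

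Next I would address coercivity, which is essentially immediate and is the conceptual heart of why the mixed operator is so well behaved variationally: for every $u\in\mathbb{X}(\Omega)$,
\begin{equation*}
\mathcal{B}_{\Omega,s}(u,u)=\int_{\Omega}|\nabla u|^{2}\,dx+[u]_{s}^{2}\;\ge\;\|u\|_{\mathbb{X}(\Omega)}^{2},
\end{equation*}
since the nonlocal Gagliardo seminorm contribution is nonnegative. Hence $\mathcal{B}_{\Omega,s}$ is coercive with constant~$1$, and one does not need any spectral argument to separate it from degeneracy. I would then check that the linear functional $L_{f}\colon v\mapsto\int_{\Omega}fv\,dx$ is continuous on $\mathbb{X}(\Omega)$; this follows from Cauchy--Schwarz combined with Poincar\'e, yielding $|L_{f}(v)|\le\sqrt{C_{P}}\,\|f\|_{L^{2}(\Omega)}\,\|v\|_{\mathbb{X}(\Omega)}$.

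With continuity, coercivity, and a bounded right-hand side in place, the Lax--Milgram Theorem produces a unique $u_{f}\in\mathbb{X}(\Omega)$ satisfying $\mathcal{B}_{\Omega,s}(u_{f},v)=\int_{\Omega}f v\,dx$ for every $v\in\mathbb{X}(\Omega)$. By Remark~\ref{rem.density}, this is exactly the notion of weak solution in Definition~\ref{def.Weak}. Testing the equation against $v=u_{f}$ and using coercivity on the left and the bound on $L_{f}$ on the right gives the a-priori estimate $\|u_{f}\|_{\mathbb{X}(\Omega)}\le\sqrt{C_{P}}\,\|f\|_{L^{2}(\Omega)}$, yielding the desired constant $\mathbf{c}_{0}$ depending only on $\Omega$ (through Poincar\'e) and independent of $f$.

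Honestly, there is no serious obstacle here: the construction of $\mathbb{X}(\Omega)$ was tailored precisely so that the local part $\int_{\Omega}|\nabla u|^{2}\,dx$ of $\mathcal{B}_{\Omega,s}$ \emph{is} the square of the ambient norm, making coercivity trivial and the fractional seminorm a harmless bonus term that is dominated by the $H^{1}$-norm thanks to the embedding \eqref{eq.estimsW1p}. The only mild care needed is in translating the a priori continuity estimate of $\mathcal{B}_{\Omega,s}$ from the $H^{1}(\R^{n})$-norm used in Remark~\ref{rem.welldef} to the intrinsic $\mathbb{X}(\Omega)$-norm, for which Poincar\'e on the bounded domain $\Omega$ suffices.
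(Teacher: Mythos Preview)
Your proposal is correct and follows exactly the approach indicated by the paper, which simply states that the result follows by applying the Lax--Milgram Theorem to $\mathcal{B}_{\Omega,s}$ and refers to \cite[Theorem~1.1]{BDVV} for details. Your write-up supplies precisely those details---continuity via Remark~\ref{rem.welldef} and Poincar\'e, coercivity from the nonnegativity of the Gagliardo term, and the a~priori estimate by testing against $u_f$---so there is nothing to add.
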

 
 Since we aim at proving some \emph{global regularity results} for $u_f$,
 it is convenient to fix also the definition of \emph{classical solution} of problem
 $(\mathrm{D})_{f}$. For this, we recall the notation~$C_b(\R^n):=C(\R^n)\cap L^\infty(\R^n)$.
 
 \begin{definition} \label{def:classicalsol} 
 Let $f:\Omega\to\R$. We say that a function $u:\R^n\to\R$
 is a \emph{classical solution} of 
 $(\mathrm{D})_{f}$ if it satisfies the following properties:
 \begin{enumerate}
  \item $u\in C_b(\R^n)\cap C^2(\Omega)$;
  \item $u\equiv 0$ pointwise in $\R^n\setminus\Omega$;
  \item $\LL u(x) = f(x)$ pointwise for every $x\in\Omega$.
 \end{enumerate}
 \end{definition}
 
 \begin{remark} \label{rem:weakclassicalcfr}
 (1)\,\,We explicitly observe that, if
  $u\in C_b(\R^n)\cap C^{2}(\Omega)$
  it is possible to
  compute $\LL u(x)$ pointwise for every $x\in\Omega$. Indeed, we have
  $$(-\Delta)^s u(x) = -\frac{1}{2}\int_{\R^n}\frac{u(x+z)+u(x-z)-2u(x)}{|z|^{n+2s}}\, d z,$$
  and the regularity of $u$ ensures that the ``second-order'' different quotient
 $$z\ni\R^n\mapsto \frac{u(x+z)+u(x-z)-2u(x)}{|z|^{n+2s}}$$
 is in $L^1(\R^n)$. To be more precise, for every $x\in\R^n$ we have
 \begin{align*}
  & \frac{|u(x+z)+u(x-z)-2u(x)|}{|z|^{n+2s}} \\
  & \quad \leq 
  c_n\bigg(
  \frac{\|u\|_{C^2(B(x,\rho_x))}}{|z|^{n+2s-2}}\cdot\mathbf{1}_{B(0,\rho_x)} + 
  \frac{\|u\|_{L^\infty(\R^n)}}{|z|^{n+2s}}\cdot\mathbf{1}_{\R^n\setminus B(0,\rho_x)}\bigg),
 \end{align*}
 where $c_n > 0$ is a suitable constant and $\rho_x > 0$ is such that
 $B(x,\rho_x)\Subset\Omega$.
 \medskip
 
 (2)\,\,Assume that $f\in L^2(\Omega)$, and let $u_f\in\mathbb{X}(\Omega)$ be the (unique)
  weak solution of~$(\mathrm{D})_{f}$, according to Theorem~\ref{thm:existenceBasic}.
  If we further assume that $u_f\in C_b(\R^n)\cap C^2(\Omega)$,
  we can compute
  $$\LL u_f(x) = -\Delta u_f(x)+(-\Delta)^s u_f(x)\quad\text{pointwise for every
  $x\in\Omega$}.$$
  Then, a standard integration-by-parts argument shows that $u_f$ is also a classical
  solution of $(\mathrm{D})_{f}$. Conversely, if $u\in C_b(\R^n)\cap C^2(\Omega)$ 
  is a classical solution
  of $(\mathrm{D})_{f}$ such that $u\in H^1(\R^n)$, then $u$ is also
  a weak solution of $(\mathrm{D})_{f}$. 
 \end{remark}
 
 The first regularity result that we aim to prove is
 a \emph{global $C^{1,\alpha}$-re\-gu\-la\-rity theorem}, which relies
 on the $W^{2,p}$-theory for $\LL$ developed by
 Bensoussan and Lions \cite{BeLi}.
 
 \begin{theorem} \label{thm:W2pBL}
  Let $f\in L^\infty(\Omega)$ and let $u_f\in\mathbb{X}(\Omega)$ denote the unique
  weak solution of $(\mathrm{D})_{f}$. Moreover,
  let us assume that $\de\Omega$ is of class $C^{1,1}$. Then,
  \begin{equation} \label{eq:ufConeHolder}
   u_f\in C^{1,\beta}(\overline{\Omega})\quad\text{for some $\beta\in(0,1)$}.
  \end{equation}
 \end{theorem}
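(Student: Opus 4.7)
The plan is a two-step reduction. First, I would upgrade the weak solution $u_f\in\mathbb{X}(\Omega)$ to a strong $W^{2,p}$-solution for every finite $p$ by appealing to the theory developed by Bensoussan and Lions; second, I would apply Morrey's embedding to conclude $C^{1,\beta}$-regularity up to the boundary. The key observation that makes this approach natural is that, for $s\in(0,1)$, the nonlocal term $(-\Delta)^s$ is of order strictly less than $2$, so it behaves as a lower-order perturbation of $-\Delta$ within the $W^{2,p}$-scale, and this is precisely the regime covered by the cited reference \cite{BeLi}.

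Concretely, since $\Omega$ is bounded, the hypothesis $f\in L^\infty(\Omega)$ gives $f\in L^p(\Omega)$ for every $p\in(1,\infty)$. Combining this with the assumption that $\partial\Omega$ is of class $C^{1,1}$, the Bensoussan--Lions $W^{2,p}$-theory for integro-differential operators of the type $-\Delta+(\text{nonlocal})$ yields that $u_f\in W^{2,p}(\Omega)$ for every $p<\infty$, together with the corresponding a-priori bound. A small preliminary check is needed to identify the weak solution in the sense of Definition~\ref{def.Weak} with the solution considered in \cite{BeLi}; this is routine, relying on the density of $C^\infty_0(\Omega)$ in $\mathbb{X}(\Omega)$ (see Remark~\ref{rem.density}) and on the integration-by-parts remark \ref{rem:weakclassicalcfr}(2).

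Once $W^{2,p}$-regularity is available for every finite $p$, I would fix some $p>n$ and set $\beta:=1-n/p\in(0,1)$. Since $\partial\Omega$ is in particular Lipschitz, Morrey's embedding provides the continuous inclusion
\[
W^{2,p}(\Omega)\hookrightarrow C^{1,\beta}(\overline{\Omega}),
\]
and composing this with the previous step yields $u_f\in C^{1,\beta}(\overline{\Omega})$, which is exactly \eqref{eq:ufConeHolder}.

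The main obstacle---in fact the only substantive input---is the Bensoussan--Lions $W^{2,p}$-estimate, whose proof rests on Calder\'on--Zygmund techniques carefully adapted to accommodate the nonlocal perturbation; from our perspective it functions as a black box. The remaining ingredients (matching the notions of solution, choosing $p>n$, and invoking Morrey's embedding) are entirely standard and present no real difficulty.
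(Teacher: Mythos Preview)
Your proposal is correct and follows essentially the same route as the paper: both observe that $f\in L^p(\Omega)$ for all finite $p$, invoke the $W^{2,p}$-theory for integro-differential operators (the paper cites \cite[Theorem~3.1.22]{GaMe}, with \cite[Theorem~3.2.3]{BeLi} as an explicit alternative in a footnote), and then conclude via the Sobolev/Morrey embedding for some $p>n$. The only cosmetic difference is that you phrase the embedding as Morrey's theorem and discuss the identification of notions of solution, while the paper leaves these routine points implicit.
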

 
 \begin{proof} Note that
  $f\in L^p(\Omega)$
  for every $p\geq 2$.
  We\footnote{For instance, in the most delicate case~$s\in[ 1/2,1)$, one can use the setting in~\cite{GaMe}
with~$j(x,\xi): = \xi$,
$m(x,\xi):= 1$, $\pi(d\xi) := |\xi|^{-n-2s}\, d\xi$, $
\overline{j}(\xi): = \xi$, $
\gamma := 2s+\epsilon$ (with~$\epsilon>0$ conveniently small),
$\theta: = 0$, $
\gamma_1 := \gamma$ and~$
\lambda_1(\xi) := |\xi|^\gamma$.

Alternatively to~\cite{GaMe}, one can use~\cite[Theorem 3.2.3]{BeLi}.} utilize~\cite[Theorem 3.1.22]{GaMe}
to obtain $W^{2,p}$-regularity for some~$p>n$.
By combining this with the Sobolev embedding theorem, we obtain \eqref{eq:ufConeHolder}. \end{proof}
  
Furthermore, we recall that the \emph{$C^{2,\alpha}$-regularity of $u_f$} when $f\in C^{\alpha}(\overline{\Omega})$ can be deduced by applying \cite[Theorem 3.1.12]{GaMe}. We refer to Appendix \ref{AppC2alpha} for an explicit proof.
 
 \begin{theorem} \label{thm:solvabLL}
  Let~$s\in (0,1/2)$ and~$\alpha\in (0,1)$. 
Suppose that $\de\Omega$ is of class $C^{2,\alpha}$.
  If $f\in C^{\alpha}(\overline{\Omega})$ and if $u_f\in\mathbb{X}(\Omega)$
  denotes the unique weak solution of $(\mathrm{D})_{f}$
  \emph{(}according to Theorem \ref{thm:existenceBasic}\emph{)}, then
  $$u_f\in C_b(\R^n)\cap C^{2,\alpha}(\overline{\Omega}).$$
  In particular, $u_f$ is a \emph{classical solution} of  $(\mathrm{D})_{f}$.
 \end{theorem}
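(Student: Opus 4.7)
The strategy is to treat $-\Delta$ as the principal part of $\LL$ and to absorb $(-\Delta)^s u_f$ into a H\"older-regular right-hand side, thereby reducing the analysis to classical elliptic Schauder theory. As a starting point, I would apply Theorem \ref{thm:W2pBL}: since $f\in C^\alpha(\overline{\Omega})\subset L^\infty(\Omega)$ and $\partial\Omega\in C^{2,\alpha}\subset C^{1,1}$, this gives $u_f\in C^{1,\beta}(\overline{\Omega})$ for some $\beta\in(0,1)$. Using $u_f|_{\partial\Omega}=0$ together with $u_f\equiv 0$ outside $\Omega$, a segment argument across $\partial\Omega$ shows that the zero-extension is continuous and globally Lipschitz on $\R^n$, so $u_f\in C_b(\R^n)\cap C^{0,1}(\R^n)$.

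Exploiting $s\in(0,1/2)$, the nonlocal operator $(-\Delta)^s$ is of order strictly less than $1$, so a standard H\"older estimate for the fractional Laplacian of a globally bounded Lipschitz function (the global H\"older exponent being $1>2s$) yields $(-\Delta)^s u_f\in C^{0,1-2s}(\R^n)$. I would then rewrite the equation as the local Poisson problem
$$-\Delta u_f = f-(-\Delta)^s u_f \quad\text{in }\Omega,\qquad u_f=0\text{ on }\partial\Omega,$$
whose right-hand side lies in $C^{0,\min(\alpha,1-2s)}(\overline{\Omega})$. Since $\partial\Omega\in C^{2,\alpha}$, classical Schauder theory applies and delivers $u_f\in C^{2,\min(\alpha,1-2s)}(\overline{\Omega})$.

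A bootstrap then iterates the scheme: each improvement in the interior regularity of $u_f$, combined with a local/nonlocal splitting of the singular integral defining $(-\Delta)^s u_f$ (the local part absorbed via the interior second-order estimate for $u_f$, the nonlocal part controlled by the $L^\infty$-bound and the smoothness of $\partial\Omega$), upgrades the H\"older exponent of $f-(-\Delta)^s u_f$. Reinjecting into Schauder eventually drives the regularity up to $u_f\in C^{2,\alpha}(\overline{\Omega})$. Continuity on all of $\R^n$ then follows because $u_f\in C^{2,\alpha}(\overline{\Omega})$ vanishes on $\partial\Omega$, so the zero-extension is in $C_b(\R^n)$; the classical-solution conclusion is then immediate from Remark \ref{rem:weakclassicalcfr}(2).

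The main obstacle is driving the bootstrap past the threshold $1-2s$. The global H\"older exponent of the zero-extension of $u_f$ cannot exceed $1$, since $\nabla u_f$ need not vanish along $\partial\Omega$, and this a priori caps the exponent that a purely global estimate can extract for $(-\Delta)^s u_f$. Getting past this cap requires exploiting the sharp interior H\"older regularity of $u_f$ near (but not on) $\partial\Omega$ via the local/nonlocal decomposition described above, and this is the technical heart of the argument.
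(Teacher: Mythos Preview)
Your approach differs from the paper's explicit proof in Appendix~\ref{AppC2alpha}. There the authors impose the extra restriction $\alpha+2s<1$ and argue by the \emph{method of continuity}: they first establish the key bound $\|(-\Delta)^s u\|_{C^\alpha(\overline\Omega)}\le \mathbf{c}\,\|u\|_{C^1(\overline\Omega)}$ for zero-extended $C^{2,\alpha}(\overline\Omega)$ functions (using that the zero-extension is globally $C^{\alpha+2s}(\R^n)$ since $\alpha+2s<1$), combine this with classical Schauder and interpolation to obtain the a~priori estimate $\|u\|_{C^{2,\alpha}}\le C(\|\LL u\|_{C^\alpha}+\sup_\Omega|u|)$, and then deform $\LL_t=-\Delta+t(-\Delta)^s$ from $t=0$ to $t=1$. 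You instead start from the $C^{1,\beta}$ regularity supplied by Theorem~\ref{thm:W2pBL}, move $(-\Delta)^s u_f$ to the right-hand side, and invoke Schauder for $-\Delta$ directly. When $\alpha<1-2s$ this single step already yields $u_f\in C^{2,\alpha}(\overline\Omega)$ with no iteration needed, so in that range your route is a valid and slightly more direct alternative to the continuity method, at the price of needing Theorem~\ref{thm:W2pBL} as input.

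The bootstrap you sketch to push past the exponent $1-2s$, however, cannot work as described. Since in general $\partial_\nu u_f$ does not vanish on $\partial\Omega$, the zero-extension of $u_f$ is globally Lipschitz and no better; one checks directly (already for $u=(1-x^2)_+$ on $\R$) that $(-\Delta)^s u_f$ then contains a term behaving like $d(x,\partial\Omega)^{1-2s}$ near the boundary, so $(-\Delta)^s u_f\in C^{1-2s}(\overline\Omega)$ is \emph{sharp} regardless of how smooth $u_f$ is in the interior. Your local/nonlocal splitting does not evade this: as $x\to\partial\Omega$ the ``local'' ball must shrink to radius $d(x,\partial\Omega)$, and the tail integral over its complement is precisely what produces the $d^{\,1-2s}$ profile. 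Hence $f-(-\Delta)^s u_f$ lies only in $C^{\min(\alpha,1-2s)}(\overline\Omega)$, Schauder caps $u_f$ at $C^{2,1-2s}(\overline\Omega)$, and the iteration stalls. Note that the paper's own explicit argument in Appendix~\ref{AppC2alpha} stops at $\alpha+2s<1$ for exactly this reason; for the full range $\alpha\in(0,1)$ the paper merely invokes \cite[Theorem~3.1.12]{GaMe} and gives no self-contained proof.
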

 
We also recall that
a regularity result up to the boundary for eigenfunctions
of mixed operators has been recently obtained in~\cite{DELP} for radially symmetric, nonnegative and continuous kernels with compact support.
 
 We close this section by stating, for future reference,
 the following Hopf-type lem\-ma for our operator $\LL$. Similar statements for this type of operators
 have been proved
 in~\cite[Theorem\,3.1.5]{GaMe}, but with the additional assumption that~$s\in(0,1/2)$.
  
 \begin{theorem} \label{thm:Hopf} Let~$c_0\in\R$ and~$\overline\varepsilon\in(0,1)$.
 Let $u\in C_b(\R^n)\cap C^2(\overline{\Omega})$ be such that
 \begin{equation}\label{LLIPO}
 \LL u \geq 0 \quad \text{pointwise in $\Omega$}.
 \end{equation}
 Let $\xi\in\de\Omega$. Assume that~$u=c_0$ on~$B_{\overline\varepsilon}(\xi)\cap\partial\Omega$ 
 and that $u\ge c_0$ in~$\R^n$.
 Suppose also that~$u\not\equiv c_0$ and that~$B_{\overline\varepsilon}(\xi)\cap\de\Omega$ is 
 of class~$C^2$. Finally, let $\nu$ be the outer unit normal to $\de\Omega$ at $\xi$. Then,
\begin{equation}\label{LLIPO2}
\de_{\nu}u(\xi) < 0.
\end{equation}
 \end{theorem}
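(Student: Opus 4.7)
My plan is to avoid an explicit barrier construction for $\LL$ and instead argue by contradiction, combining a classical second-derivative analysis of $-\Delta u$ at $\xi$ with a strict nonlocal lower bound on $-(-\Delta)^s u(x_k)$ as $x_k\to\xi$ from inside $\Omega$.

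\emph{Reduction and strong minimum principle.} Replacing $u$ by $u-c_0$ (using $\LL c_0=0$), I reduce to $c_0=0$, so $u\ge 0$ in $\R^n$, $u(\xi)=0$, $u\not\equiv 0$. If $u(x_0)=0$ at some interior $x_0$, the second-derivative test at an interior minimum gives $-\Delta u(x_0)\le 0$, while $(-\Delta)^s u(x_0)\le 0$ since the integrand $(2u(x_0)-u(x_0+z)-u(x_0-z))/|z|^{n+2s}$ is non-positive at a global minimum. Thus $\LL u(x_0)\ge 0$ forces both quantities to vanish, and the vanishing of $(-\Delta)^s u(x_0)$ together with $u\ge 0$ yields $u\equiv 0$, a contradiction. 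Hence $u>0$ in $\Omega$, and in particular $u\ge m>0$ on every compact subset of $\Omega$.

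\emph{Local analysis.} Since $u\ge 0$ with $u(\xi)=0$, one has $\partial_\nu u(\xi)\le 0$ directly from the one-sided difference quotient. Assume, for contradiction, $\partial_\nu u(\xi)=0$. Writing $\partial\Omega$ locally as a $C^2$ graph $x_n=g(x')$ with $g(0)=\nabla g(0)=0$ and $\nu=e_n$, one differentiation of the identity $u(x',g(x'))=0$ yields the vanishing of the tangential gradient of $u$ at $\xi$, and a second differentiation yields
\[\partial^2_{x_i x_j}u(\xi)=-\partial_\nu u(\xi)\,\partial^2_{x_i x_j}g(0)=0,\qquad i,j<n.\]
Meanwhile, the Taylor expansion of $u$ along $\xi-t\nu$ together with $u(\xi-t\nu)\ge 0$ forces $\partial^2_\nu u(\xi)\ge 0$, and so $\Delta u(\xi)=\partial^2_\nu u(\xi)\ge 0$.

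\emph{Nonlocal analysis and contradiction.} For $x_k:=\xi-t_k\nu$ with $t_k\downarrow 0$, the interior-ball condition at $\xi$ gives $B(x_k,t_k)\subset\Omega$. Splitting
\[-(-\Delta)^s u(x_k)=\tfrac12\int_{\R^n}\frac{u(x_k+z)+u(x_k-z)-2u(x_k)}{|z|^{n+2s}}\,dz\]
at $|z|=t_k$, the inner piece is bounded by $\tfrac12\|D^2 u\|_{L^\infty(\overline\Omega)}\int_{|z|<t_k}|z|^{2-n-2s}\,dz=O(t_k^{2-2s})\to 0$ via the second-order Taylor expansion of $u\in C^2(\overline\Omega)$ at $x_k$. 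After the change of variable $y=x_k+z$ and the symmetry $z\leftrightarrow-z$, the outer piece equals $\int_{|y-x_k|>t_k}u(y)/|y-x_k|^{n+2s}\,dy-u(x_k)\,\omega_{n-1}/(2s\,t_k^{2s})$; the last term is $O(u(x_k)/t_k^{2s})=O(t_k^{2-2s})$ since the contradiction hypothesis $\nabla u(\xi)=0$ forces $u(x_k)=O(t_k^2)$. For the remaining non-negative piece, Fatou's lemma gives
\[\liminf_{k\to\infty}\int_{|y-x_k|>t_k}\frac{u(y)}{|y-x_k|^{n+2s}}\,dy\ge \int_{\R^n}\frac{u(y)}{|y-\xi|^{n+2s}}\,dy>0,\]
the positivity following from the strong minimum principle on any compact $K\subset\Omega$ with $\xi\notin K$. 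Therefore $\liminf_k[-(-\Delta)^s u(x_k)]>0$, and $\LL u(x_k)\ge 0$ together with the continuity of $-\Delta u$ up to $\partial\Omega$ gives $-\Delta u(\xi)>0$, in contradiction with $\Delta u(\xi)\ge 0$ from the local analysis. The main obstacle is precisely this nonlocal step: $(-\Delta)^s u(\xi)$ is not a priori defined in the classical pointwise sense, since $u\in C^2(\overline\Omega)$ need not be smooth across $\partial\Omega$; the proof avoids the issue by evaluating $\LL u$ only at interior points $x_k\to\xi$, with the quadratic decay $u(x_k)=O(t_k^2)$ furnished by the contradiction hypothesis taming the singular tail uniformly for all $s\in(0,1)$, thereby removing the restriction $s<1/2$ of \cite{GaMe}.
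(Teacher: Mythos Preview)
Your argument is correct and takes a genuinely different route from the paper's proof. The paper does \emph{not} argue by contradiction on $\partial_\nu u(\xi)=0$. Instead, it first proves an auxiliary sign result: by a contradiction argument (assuming there exist interior points $p_k\to\xi$ with $(-\Delta)^s u(p_k)>0$) together with a delicate odd $C^{1,1}$ reflection of $u$ across $\partial\Omega$ to control the near-field contribution, it shows that $(-\Delta)^s u\le 0$ in an entire small interior ball tangent to $\partial\Omega$ at $\xi$. From $\LL u\ge 0$ this yields $-\Delta u\ge 0$ in that ball, and the conclusion follows from the \emph{classical} Hopf lemma for $-\Delta$.

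Your approach bypasses both the reflection construction and the appeal to the classical Hopf lemma. The crucial point is that the contradiction hypothesis $\partial_\nu u(\xi)=0$, combined with the boundary identity $u(x',g(x'))=0$, forces $\nabla u(\xi)=0$; this quadratic decay $u(x_k)=O(t_k^2)$ is exactly what makes the singular tail $u(x_k)/t_k^{2s}$ vanish for \emph{every} $s\in(0,1)$, and Fatou then delivers the strict positivity of $-(-\Delta)^s u(x_k)$ in the limit. This is more elementary and arguably more transparent. On the other hand, the paper's route yields a stronger intermediate statement (the pointwise sign of $(-\Delta)^s u$ in a full neighborhood of $\xi$), which may be of independent interest, and its reduction to the classical Hopf lemma makes the role of the local operator more visible. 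Both proofs dispense with the restriction $s<1/2$ present in \cite{GaMe}.
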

 
 \begin{proof} Without loss of generality, we suppose that~$c_0=0$ and that~$\xi$ coincides with the origin. In particular,
 \begin{equation}\label{aoa232843vb480}
 u(0)=0.\end{equation}
 Also, by the regularity of~$B_{\overline\varepsilon}(\xi)\cap\partial\Omega$, we suppose that there exists~$\rho_0>0$ such that~$B_{\rho_0}(\rho_0 e_n)\subseteq\Omega$ touches
 the boundary of~$\Omega$ at the origin. 
 We remark that, by assumption, there exists a point~$p\in\R^n$ such that~$u(p)>0$, and therefore, by continuity,
there exists a ball~$B\subset\R^n$ such that~$u>0$ in~$B$.
In light of this
and~\eqref{aoa232843vb480}, we can find~$\varepsilon_0\in(0,\overline\varepsilon/4)$ sufficiently small
such that 
$$\text{$B\subset \R^n\setminus B_{\varepsilon}$ for all~$\varepsilon\in(0,\varepsilon_0]$}.$$
Now, we claim that there exists~$\rho\in(0,\rho_0]$ such that
 \begin{equation}\label{poe37567ghgvhvgw}
 (-\Delta)^s u\le0 \quad {\mbox{ in }} B_\rho(\rho e_n). 
 \end{equation}
To prove this, we suppose by contradiction that for every~$k\in\N$ there exists
 $$p_k\in B_{1/k}( e_n/k)\subseteq\Omega$$
such that~$(-\Delta)^s u(p_k)>0$. This implies that, for every~$\varepsilon\in(0,\varepsilon_0]$,
\begin{equation}\begin{split}\label{w24v58bv587}
0&\le\, \liminf_{k\to+\infty} \int_{\R^n}\frac{2u(p_k)-u(p_k+y)-u(p_k-y)}{|y|^{n+2s}}\, d y\\
&=\, \liminf_{k\to+\infty} 
\Bigg(\int_{B_{\varepsilon}}\frac{2u(p_k)-u(p_k+y)-u(p_k-y)}{|y|^{n+2s}}\, d y\\ & 
\qquad \qquad\qquad
+ \int_{\R^n\setminus B_{\varepsilon}}\frac{2u(p_k)-u(p_k+y)-u(p_k-y)}{|y|^{n+2s}}\, d y\Bigg).
\end{split}
\end{equation}
Notice that~$p_k\to0$ as~$k\to+\infty$, and therefore, by the Dominated Con\-ver\-gen\-ce Theorem, one sees that
\begin{equation*}
\begin{split}&
\lim_{k\to+\infty}\int_{\R^n\setminus B_{\varepsilon}}\frac{2u(p_k)-u(p_k+y)-u(p_k-y)}{|y|^{n+2s}}\, d y
\\&\qquad=\int_{\R^n\setminus B_{\varepsilon}}\frac{2u(0)-u(y)-u(-y)}{|y|^{n+2s}}\, d y
=-\int_{\R^n\setminus B_{\varepsilon}}\frac{u(y)+u(-y)}{|y|^{n+2s}}\, d y\\&\qquad\le
-\int_{B}\frac{u(y)+u(-y)}{|y|^{n+2s}}\, d y\le-\int_{B}\frac{u(y)}{|y|^{n+2s}}\, d y\le -c,
\end{split}
\end{equation*}
for some~$c>0$ independent of~$\varepsilon$.
Plugging this information into~\eqref{w24v58bv587}, we obtain that, for every~$\varepsilon\in(0,\varepsilon_0]$,
\begin{equation}\label{si39v3875885}
c\le \liminf_{k\to+\infty} \int_{B_{\varepsilon}}\frac{2u(p_k)-u(p_k+y)-u(p_k-y)}{|y|^{n+2s}}\, d y.
\end{equation}
Now we claim that there exists~$\widetilde u\in C^{1,1}(B_{2\varepsilon})$ such that
\begin{equation}\label{EXTENS}
{\mbox{$\widetilde u=u$ in~$B_{2\varepsilon}\cap\Omega$ and
$\widetilde u\le u$ in~$B_{2\varepsilon}\setminus\Omega$.}}
\end{equation}
To this end, we consider a~$C^2$-diffeomorphism~$\Phi$ such that
\begin{itemize}
 \item[(a)] $B_{2\varepsilon}\subseteq \Phi(B_{C_0\varepsilon})$ for a suitable~$C_0>0$;
 \item[(b)] $B_{2\varepsilon}\cap\Omega\subseteq
\Phi(B_{C_0\varepsilon}\cap\{x_n>0\})\subseteq\Omega$;
 \item[(c)] $B_{2\varepsilon}\setminus\Omega\subseteq
\Phi(B_{C_0\varepsilon}\setminus\{x_n>0\})\subseteq\R^n\setminus\Omega$.
\end{itemize}
 Possibly reducing~$\varepsilon$ we can also assume that 
 $$\Phi(B_{C_0\varepsilon})\subseteq B_{\overline\varepsilon/2}.$$
For all~$x\in B_{C_0\varepsilon}$ we define~$U(x):=u(\Phi(x))$
and we notice that $U$ is a $C^{1,1}$ function
in~$B_{C_0\varepsilon}\cap\{x_n\ge0\}$
and~$U=0$ along~$\{x_n=0\}$.
Thus, we define
$$ \widetilde{U}(x):=\begin{cases}
U(x',x_n) & {\mbox{ if }}x_n>0,\\
-U(x',-x_n) & {\mbox{ if }}x_n\le0.
\end{cases}$$
We observe that~$\widetilde{U}$ is continuous across~$\{x_n=0\}$ and that
$$\nabla \widetilde{U}(x)=\begin{cases}
\nabla U(x',x_n) & {\mbox{ if }}x_n>0,\\
\nabla U(x',-x_n) & {\mbox{ if }}x_n<0.
\end{cases}$$
This gives that~$\widetilde U$ is a~$C^1$ function in~$B_{C_0\varepsilon}$. Moreover, we
claim that
\begin{equation}\label{C11tile}
{\mbox{$\widetilde U$ is a~$C^{1,1}$ function in~$B_{C_0\varepsilon}$.}}
\end{equation}
To check this, it suffices to consider~$x=(x',x_n)$ and~$y=(y',y_n)$ with~$x_n>0>y_n$ and observe that
\begin{eqnarray*}
|\nabla \widetilde{U}(x)-\nabla \widetilde{U}(y)|&=&|\nabla U(x',x_n)-
\nabla U(y',-y_n)|\\&\le&
|\nabla U(x',x_n)-
\nabla U(x',0)|
+|\nabla U(x',0)-
\nabla U(y',0)|\\&&\qquad\qquad
+|\nabla U(y',0)-
\nabla U(y',-y_n)|\\&\le&
\|U\|_{C^{1,1}(B_{C_0\varepsilon}\cap\{x_n\ge0\})}\Big(
x_n+|x'-y'|-y_n
\Big)
\\&\le&
2\|U\|_{C^{1,1}(B_{C_0\varepsilon}\cap\{x_n\ge0\})}|x-y|,
\end{eqnarray*}
thus establishing~\eqref{C11tile}.

Hence, for every~$x\in B_{2\varepsilon}$ we define~$\widetilde u(x):=\widetilde U(\Phi^{-1}(x))$
and we infer from~\eqref{C11tile} that~$\widetilde u\in C^{1,1}(B_{2\varepsilon})$.
Furthermore, if~$x\in B_{2\varepsilon}\cap\Omega$, then 
 $$\Phi^{-1}(x)\in B_{C_0\varepsilon}\cap\{x_n>0\}$$
and, as a result, we have that~$\widetilde u(x)=\widetilde U(\Phi^{-1}(x))
= U(\Phi^{-1}(x))=u(x)$. If instead~$x\in B_{2\varepsilon}\setminus\Omega$, then~$\Phi^{-1}(x)\in B_{C_0\varepsilon}\cap\{x_n\le0\}$.
Hence, letting 
$$(y',y_n):=\Phi^{-1}(x)$$ 
and using that~$u\ge 0$,
we get in this case that
\begin{align*} 
  \widetilde u(x) & = \widetilde U(\Phi^{-1}(x))=
\widetilde U(y',y_n)=-U(y',-y_n)\le0 \\
 & \le U(y',y_n)=U(\Phi^{-1}(x))=u(x).
\end{align*}
These observations complete the proof of~\eqref{EXTENS}.
\medskip

By~\eqref{EXTENS} it follows that
$$ 2u(p_k)-u(p_k+y)-u(p_k-y)\le
2\widetilde u(p_k)-\widetilde u(p_k+y)-\widetilde u(p_k-y).$$
Hence, if~$y\in B_\varepsilon$,
$$ 2u(p_k)-u(p_k+y)-u(p_k-y)\le\|\widetilde u\|_{C^{1,1}(B_{2\varepsilon})}|y|^2$$
and accordingly
\begin{eqnarray*}
\int_{B_{\varepsilon}}\frac{2u(p_k)-u(p_k+y)-u(p_k-y)}{|y|^{n+2s}}\, d y&\le&
\int_{B_{\varepsilon}}
\|\widetilde u\|_{C^{1,1}(B_{2\varepsilon})}|y|^{2-n-2s}
\, d y\\&\le& C\,\|\widetilde u\|_{C^{1,1}(B_{2\varepsilon})}\varepsilon^{2-2s}.
\end{eqnarray*}
This and~\eqref{si39v3875885} entail that~$c\le C\,\|\widetilde u\|_{C^{1,1}(B_{2\varepsilon})}
\varepsilon^{2-2s}$,
and we thereby obtain a contradiction
by choosing $\varepsilon$ conveniently small.
This completes the proof of~\eqref{poe37567ghgvhvgw}.

{F}rom~\eqref{LLIPO} and~\eqref{poe37567ghgvhvgw} we deduce that there exists~$\rho>0$ such that
$$ \text{$0\le \LL u =-\Delta u +(-\Delta)^s u\le -\Delta u$ in~$B_\rho(\rho e_n)$}.$$
Moreover, we have~$u>0$ in~$B_\rho(\rho e_n)$. Indeed, if there exists a point~$q\in B_\rho(\rho e_n)$
such that~$u(q)=0$,
then by the Maximum Principle (see~\cite[Theorem 1.4]{BDVV}) we would have that~$u\equiv0$ in~$\R^n$,
which would contradict our hypothesis. 
As a con\-se\-quen\-ce, we are in the position of applying the Hopf Lemma 
for the classical Laplacian (see e.g.~\cite[Lemma 3.4]{GT})
thus obtaining the desired result in~\eqref{LLIPO2}.
 \end{proof}
 
 \begin{remark}
 We stress that 
\begin{equation}\label{MAGA3}
\begin{split}&
{\mbox{the assumption in Theorem~\ref{thm:Hopf}
that $u(\xi)\le u(x)$ for all~$x\in\R^n$}}
\\&{\mbox{cannot be weakened by assuming that~$u(\xi)\le u(x)$ for all~$x\in\overline\Omega$.}}
\end{split}\end{equation}
As a counterexample, let~$u$
be the minimizer of
$$ \int_{-1}^1|\nabla u(x)|^2\, dx+\iint_{\R^2}\frac{|u(x)-u(y)|^2}{|x-y|^{1+2s}}\, d x\, dy$$
among the functions in~$H^1_0(\R)$ such that~$u=u_0$ outside~$(-1,1)$,
for a given smooth function~$u_0$ such
that 
$$
\chi_{(-4,-3)\cup(3,4)}\le
u_0\le
\chi_{(-5,-2)\cup(2,5)}.$$
By the Sobolev Embedding Theorem, we know that~$u$ is continuous in~$\R$ and, in view of
its minimality property, it satisfies~${\mathcal{L}}u=0$ in~$(-1,1)$.
Thus, let~$\xi\in[-1,1]$ be such that~$u(\xi)=\min_{[-1,1]} u$. We claim that
\begin{equation}\label{MAGA}
\xi\in(-1,1).\end{equation} Indeed, if not, we would have that~$u\ge0$ in~$[-1,1]$.
Since~$u\le0$ outside~$(-1,1)$ we know from the weak maximum principle
(see e.g.~\cite[Theorem~1.2]{BDVV}) that~
$$\text{$u\le0$ in~$[-1,1]$},$$ 
and consequently~$u$ vanishes identically in~$[-1,1]$. In particular, the origin would provide an interior maximum for~$u$. Accordingly,
by the strong maximum principle (see e.g.~\cite[Theorem 1.4]{BDVV}),
we gather that~$u$ vanishes identically in~$\R$. Since this is a contradiction with the values of~$u$ in~$(-4,-3)\cup(3,4)$, the proof of~\eqref{MAGA} is completed.
Now, from~\eqref{MAGA}, we deduce that
\begin{equation}\label{MAGA2}
u'(\xi)=0.\end{equation} We also take~$\delta>0$ such that~$\Omega:=(\xi,\xi+\delta)\subset(-1,1)$.
By construction, we have that~$\xi$ is a minimizing point for~$u$ in~$\overline\Omega$ and that~${\mathcal{L}}u=0$ in~$\Omega$. Thus,
equation~\eqref{MAGA2} proves the observation in~\eqref{MAGA3}. 
 \end{remark}

  \section{The principal Dirichlet eigenvalue for $\LL$} \label{sec.firstEig}
  
  In this section we introduce the notion
  of principal Dirichlet ei\-gen\-va\-lue for 
  $\LL$, 
  and we prove some regularity results for the associated
  eigenfunctions. In what follows, we tacitly inherit all the assumptions
  and notation introduced so far; in particular, $s\in (0,1)$ and
  $\Omega\subseteq\R^n$ is a \emph{bounded open set with $C^1$ boundary}.
  \medskip
  
  To begin with, we give the following definition.
  \begin{definition} \label{def.Eigen}
   We
   define the \emph{prin\-ci\-pal
   Di\-ri\-chlet eigenvalue of $\LL$ in $\Omega$} as 
   \begin{equation} \label{eq.defLambdaLL}
    \lambda_{\LL}(\Omega) :=
    \inf\bigg\{\frac{\mathcal{D}_{\Omega,s}(u)}{\|u\|^2_{L^2(\Omega)}}:\,u\in \mathbb{X}(\Omega)
    \setminus\{0\}\bigg\} \in [0,\infty),
   \end{equation}
   where $\mathcal{D}_{\Omega,s}$ is the quadratic form
   defined in \eqref{eq.defquadD}.
  \end{definition}
  \begin{remark} \label{rem.proplambdaLL}
  Before proceeding we list, for a future reference, some simple pro\-per\-ties
  of $\lambda_\LL(\cdot)$ which easily follow from
   its very definition.
   \begin{enumerate}
    \item For every $x_0\in\R^n$, one has
    $$\lambda_{\LL}(\Omega) = \lambda_\LL(x_0+\Omega).$$
    
    \item For every $t > 0$, one has
    \vspace*{0.05cm}
    
    \begin{itemize}
    \item[(i)] 
     $t^{-2s}\lambda_{\LL}(\Omega)
    \leq \lambda_{\LL}(t\Omega) \leq t^{-2}\lambda_{\LL}(\Omega)$
    if $0 < t \leq 1$;
    \vspace*{0.08cm}
    
    \item[(ii)]
    $t^{-2}\lambda_{\LL}(\Omega)
    \leq \lambda_{\LL}(t\Omega) \leq t^{-2s}\lambda_{\LL}(\Omega)$ if $t > 1$.
    \end{itemize}
    \vspace*{0.2cm}
    
    \item Let $\lambda_1(\Omega)$ be
    the principal
    Dirichlet eigenvalue of $-\Delta$ in $\Omega$, i.e.,
    $$\lambda_1(\Omega) := \inf\bigg\{\frac{\int_{\Omega}|\nabla u|^2\, dx}{\|u\|^2_{L^2(\Omega)}}
    :\,u\in H_0^1(\Omega)\setminus\{0\}\bigg\}.$$
    Then, one has the bound
    $$\lambda_{\LL}(\Omega) \geq \lambda_1(\Omega) > 0.$$
    
    \item There exists a positive constant $c = c(\Omega)>0$ such that
    $$\lambda_{\LL}(\Omega) \leq c \, \lambda_1 (\Omega).$$
   \end{enumerate}
  \end{remark}
  Then, by using standard arguments of the Calculus of Variations
  (see, e.g., the approach in \cite[Proposition 9]{SerVal}),
  it is possible to prove the following result.
  
  \begin{theorem} \label{thm.existencelambdaLL}
    The infimum in \eqref{eq.defLambdaLL} is achieved.
    More precisely, there exists \emph{a unique}
     $u_0\in \mathbb{X}(\Omega)\setminus\{0\}$ such that:
     \begin{enumerate}
      \item $\|u_0\|_{L^2(\Omega)} = 1$ and $\lambda_\LL(\Omega) = \mathcal{D}_{\Omega,s}(u_0)$;
      \vspace*{0.05cm}
      
      \item $u_0\geq 0$ a.e.\,in $\Omega$.
     \end{enumerate}
     Furthermore, $u_0$ is a weak solution of the following problem
     \begin{equation} \label{eq.PDEweak}
      \begin{cases} 
      \LL u = \lambda_\LL(\Omega)u & \text{in $\Omega$}, \\
      u \equiv 0 & \text{in $\R^n\setminus\Omega$},
      \end{cases}
      \end{equation}
   namely
     \begin{equation} \label{eq:uzeroPDE} 
     \begin{split}
      \int_{\Omega}\langle \nabla u_0,\nabla v\rangle\, dx
      & + \iint_{\R^{2n}}\frac{(u_0(x)-u_0(y))(v(x)-v(y))}{|x-y|^{n+2s}}\, d x\, dy
     \\
     & 
     =
     \lambda_\LL(\Omega)\int_{\R^n}u_0v\, dx \qquad\quad
     \text{for every $v\in \mathbb{X}(\Omega)$}.
     \end{split}
     \end{equation}
  \end{theorem}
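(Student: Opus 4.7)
The plan is to apply the direct method of the Calculus of Variations, following the template of~\cite[Proposition~9]{SerVal}. First, I take a minimizing sequence $\{u_k\}\subset \mathbb{X}(\Omega)$ with $\|u_k\|_{L^2(\Omega)}=1$ and $\mathcal{D}_{\Omega,s}(u_k)\to\lambda_\LL(\Omega)$. Since $\mathcal{D}_{\Omega,s}(u_k)\ge\|u_k\|_{\mathbb{X}(\Omega)}^2$, the sequence is bounded in the Hilbert space $\mathbb{X}(\Omega)$, so up to extraction $u_k\rightharpoonup u_0$ weakly in $\mathbb{X}(\Omega)$; by the Rellich--Kondrachov theorem one has $u_k\to u_0$ strongly in $L^2(\Omega)$, which forces $\|u_0\|_{L^2(\Omega)}=1$ (hence $u_0\not\equiv 0$). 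Both the local Dirichlet energy and the Gagliardo seminorm appearing in $\mathcal{D}_{\Omega,s}$ are squared Hilbert norms and therefore weakly lower semicontinuous along $u_k\rightharpoonup u_0$; thus $\mathcal{D}_{\Omega,s}(u_0)\le\liminf_k\mathcal{D}_{\Omega,s}(u_k)=\lambda_\LL(\Omega)$, and $u_0$ attains the infimum.

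To arrange $u_0\ge 0$ I replace $u_0$ by $|u_0|$: the chain rule gives $\bigl|\nabla|u_0|\bigr|=|\nabla u_0|$ a.e., while the pointwise inequality $\bigl||u_0|(x)-|u_0|(y)\bigr|\le |u_0(x)-u_0(y)|$ yields $\mathcal{D}_{\Omega,s}(|u_0|)\le\mathcal{D}_{\Omega,s}(u_0)$, and $\||u_0|\|_{L^2(\Omega)}=1$. The weak Euler--Lagrange identity~\eqref{eq:uzeroPDE} then follows from a standard first variation: imposing that the derivative at $\varepsilon=0$ of $\mathcal{D}_{\Omega,s}(u_0+\varepsilon v)/\|u_0+\varepsilon v\|_{L^2(\Omega)}^2$ vanishes for every test $v\in\mathbb{X}(\Omega)$ produces exactly~\eqref{eq:uzeroPDE}, after expanding $\mathcal{D}_{\Omega,s}$ via the bilinear form $\mathcal{B}_{\Omega,s}$.

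The most delicate point is uniqueness. Suppose $u_0,v_0$ are two nonnegative normalized minimizers; both satisfy~\eqref{eq.PDEweak} in the weak sense, and by the strong maximum principle of~\cite[Theorem~1.4]{BDVV} (the same tool already invoked in the proof of Theorem~\ref{thm:Hopf}) each admits a continuous representative that is strictly positive throughout $\Omega$, using interior regularity for $\mathcal{L}$. I then set $\alpha:=\inf_{x\in\Omega}u_0(x)/v_0(x)>0$ and form $w:=u_0-\alpha v_0$: by linearity $w$ is a nonnegative weak solution of the same eigenvalue equation, and by construction it vanishes at some interior point of $\Omega$, so a second application of the strong maximum principle forces $w\equiv 0$. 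Hence $u_0=\alpha v_0$, and the common normalization $\|u_0\|_{L^2(\Omega)}=\|v_0\|_{L^2(\Omega)}=1$ imposes $\alpha=1$. The hard part is precisely closing this uniqueness step: one must justify continuity and strict positivity of the eigenfunctions in $\Omega$ (only interior regularity is needed, since the boundary hypothesis here is merely $C^1$), and ensure that the linear combination $w$ remains admissible for the strong maximum principle.
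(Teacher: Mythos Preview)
Your existence, nonnegativity, and Euler--Lagrange arguments are correct and coincide with the standard approach that the paper defers to \cite[Proposition~9]{SerVal}. The uniqueness step, however, has a real gap. You set $\alpha:=\inf_{\Omega}u_0/v_0$ and assert both that $\alpha>0$ and that $w:=u_0-\alpha v_0$ vanishes at some interior point of $\Omega$. Neither claim is justified under the hypotheses in force here (only $C^1$ boundary, general $s\in(0,1)$): since $u_0$ and $v_0$ both tend to $0$ at $\partial\Omega$, the ratio $u_0/v_0$ need not be bounded away from zero, and even if it were, its infimum over the \emph{open} set $\Omega$ need not be attained. Controlling this ratio near the boundary would require Hopf-type estimates, but in the paper those (Theorem~\ref{thm:Hopf} and Theorem~\ref{thm:globalCalfauzero}) are established only later and under strictly stronger regularity on $\partial\Omega$; invoking them at this stage would be circular.

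The argument in \cite{SerVal} bypasses this difficulty. One first shows that \emph{every} first eigenfunction has constant sign: if $w\not\equiv 0$ satisfies $\mathcal{D}_{\Omega,s}(w)=\lambda_\LL(\Omega)\|w\|_{L^2}^2$, then so does $|w|$, and since the local parts agree one must have equality in the nonlocal part, i.e.\ $\bigl||w|(x)-|w|(y)\bigr|=|w(x)-w(y)|$ for a.e.\ $x,y$, which forces $w(x)w(y)\ge 0$ a.e.\ and hence $w$ does not change sign. Now, given two nonnegative normalized minimizers $u_0,v_0$, set $t^*:=\int_\Omega u_0\big/\int_\Omega v_0$ and $w:=u_0-t^*v_0$; by linearity $w$ solves the eigenvalue equation and hence (if nonzero) is a minimizer of constant sign, but $\int_\Omega w=0$, so $w\equiv 0$. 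The normalization then yields $t^*=1$. This route needs no interior continuity, no ratio, and no Hopf lemma.
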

  
  \begin{definition}     We refer to $u_0$ given by Theorem~\ref{thm.existencelambdaLL} as the 
     \emph{principal
  eigenfunction of $\LL$} \emph{(}in $\Omega$\emph{)}.
  \end{definition}
  
  We devote the rest of this section
  to prove some regularity properties of $u_0$. 
  To begin with, we establish the following \emph{global boundedness} result.
  \begin{theorem} \label{thm:uzeroglobalBd}
   The principal eigenfunction $u_0$ is \emph{globally bounded} on $\Omega$.
  \end{theorem}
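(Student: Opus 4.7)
My plan is to establish $u_0 \in L^\infty(\Omega)$ by a Moser-type iteration; since $u_0\equiv 0$ on $\R^n\setminus\Omega$ and $\Omega$ is bounded, this immediately yields global boundedness. The eigenvalue equation~\eqref{eq:uzeroPDE} will be tested against truncated powers of $u_0$, and the crucial observation is that the nonlocal part of $\BB_{\Omega,s}$ contributes a \emph{nonnegative} quantity against such test functions, so it may simply be discarded in favour of the local gradient term. The classical Sobolev embedding then drives the iteration.

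For each $M>0$ and $\gamma\geq 1$ I would set $u_M:=\min\{u_0,M\}$ and use the test function $v:=u_M^{2\gamma-1}$. This $v$ belongs to $\mathbb{X}(\Omega)$ since it is bounded, vanishes outside $\Omega$, and has an $L^2$ gradient. Inserting $v$ into~\eqref{eq:uzeroPDE}, the local part produces
$$\int_\Omega \langle \nabla u_0,\nabla u_M^{2\gamma-1}\rangle\,dx \;=\; \frac{2\gamma-1}{\gamma^2}\int_\Omega |\nabla u_M^\gamma|^2\,dx,$$
while the nonlocal part is nonnegative, because both $u_0(x)-u_0(y)$ and $u_M^{2\gamma-1}(x)-u_M^{2\gamma-1}(y)$ share the same sign (the map $t\mapsto \min\{t,M\}^{2\gamma-1}$ being nondecreasing on $[0,\infty)$). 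Exploiting $u_M\leq u_0$ on the right-hand side together with the Sobolev inequality $\|\varphi\|_{L^{2^*}(\R^n)}\leq C_S\|\nabla\varphi\|_{L^2(\R^n)}$ (valid for $n\geq 3$), and sending $M\to\infty$ by monotone convergence whenever $u_0\in L^{2\gamma}(\Omega)$ is already known, I obtain the recursive bound
$$\|u_0\|_{L^{\gamma\cdot 2^*}(\Omega)}^{2\gamma} \;\leq\; \frac{C_S^{\,2}\,\lambda_{\LL}(\Omega)\,\gamma^2}{2\gamma-1}\,\|u_0\|_{L^{2\gamma}(\Omega)}^{2\gamma}.$$

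I would then iterate this estimate along the geometric sequence $\gamma_k:=\chi^k$, with $\chi:=n/(n-2)>1$, starting from $u_0\in L^2(\Omega)$ given by $u_0\in \mathbb{X}(\Omega)$. At each stage the integrability established at the previous stage legitimises the next application of the recursive bound, and taking $2\gamma_k$-th roots gives $\|u_0\|_{L^{2\chi^{k+1}}}\leq A_k\|u_0\|_{L^{2\chi^k}}$ with constants $A_k$ satisfying $\log A_k = O(k\chi^{-k})$. Since this sequence is summable, the product $\prod_k A_k$ converges, and sending $k\to\infty$ yields $\|u_0\|_{L^\infty(\Omega)}\leq C\|u_0\|_{L^2(\Omega)}=C$, recalling that $\|u_0\|_{L^2(\Omega)}=1$. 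The low-dimensional cases are even easier: for $n=1$ the embedding $H^1\hookrightarrow L^\infty$ applies directly, while for $n=2$ one replaces $2^*$ by any large fixed exponent $p<\infty$ and runs the same iteration. The main technical point I expect to verify with care is the admissibility of $u_M^{2\gamma-1}$ as an element of $\mathbb{X}(\Omega)$ (routine, since $u_0\geq 0$ and $u_0=0$ outside $\Omega$ ensure that $u_M$ is a valid truncation) and the monotone passage $M\to\infty$ in the nonlocal bilinear form, which follows from Fatou's lemma applied to the nonnegative integrand together with the $H^1$-control of $u_0$.
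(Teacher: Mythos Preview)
Your proposal is correct and follows a genuinely different path from the paper. The paper applies the Stampacchia level-set method: it tests the eigenvalue equation against the truncations $w_k:=(\sqrt{\delta}\,u_0-C_k)_+$ with $C_k:=1-2^{-k}$, uses the same sign observation you make to drop the nonlocal term, and then combines H\"older and Sobolev to obtain the nonlinear recursion $U_k\le c'\,\eta^{\,k-1}U_{k-1}^{1+2/n}$ for $U_k:=\|w_k\|_{L^2}^2$, finally invoking the standard geometric-convergence lemma (Giusti, Lemma~7.1) to force $U_k\to0$ when the initial datum is small.

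By contrast, you run a Moser iteration on the $L^p$ scale with test functions $u_M^{2\gamma-1}$ and geometric exponent sequence $\gamma_k=\chi^k$. Both arguments exploit the same structural fact---the nonlocal bilinear form is nonnegative against monotone functions of $u_0$---and both ultimately rest on the classical Sobolev inequality; the difference is purely in bookkeeping. The Stampacchia route packages the iteration into level-set measures and an off-the-shelf lemma; your Moser route tracks $L^p$ norms directly and makes the convergence of $\prod_k A_k$ explicit via $\log A_k=O(k\chi^{-k})$. Either is perfectly adequate here. One small remark: once you discard the nonlocal term by nonnegativity, there is nothing nonlocal left when you pass $M\to\infty$; the limit is only needed on the (local) left-hand side, where monotone convergence applied to $u_M^{\gamma\cdot 2^*}$ suffices.
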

  The proof of Theorem \ref{thm:uzeroglobalBd} relies on the 
  classical method by Stampacchia, and is essentially analogous to that
  of \cite[Theorem 4.7]{BDVV} (see also \cite{IRE, SVBd}).  
  However, we present it here with all the details
  for the sake of completeness.
  
  \begin{proof}[Proof of Theorem~\ref{thm:uzeroglobalBd}] 
   To begin with, we arbitrarily fix $\delta > 0$ and we set
   $$\widetilde{u}_0 := \sqrt{\delta}\,u_0.$$
   Moreover, for every $k\in\N$, we define $C_k := 1-2^{-k}$ and
	$$v_k:=\widetilde{u}_0-C_k, \quad w_k:=(v_k)_+:=\max\{v_k,0\},\quad
	U_k:= \|w_k\|_{L^{2}(\Omega)}^2.$$ 
  We explicitly point out that, in view of these definitions, one has
  \begin{itemize}
   \item[(a)] $\|\widetilde{u}_0\|^2_{L^2(\Omega)} = \delta\,\|u_0\|^2_{L^2(\Omega)} = \delta$;
   \vspace*{0.05cm}
   \item[(b)] $w_0 = v_0 = \widetilde{u}_0$ (since $C_0 = 0$);
   \vspace*{0.05cm}
   \item[(c)] $v_k\geq v_{k+1}$ and $w_k\geq w_{k+1}$ (since $C_k < C_{k+1}$).
  \end{itemize}
  We now observe that, since $u_0\in\mathbb{X}(\Omega)\subseteq H^1(\R^n)$, we
  have $v_k\in H^1_{\mathrm{loc}}(\R^n)$; fur\-ther\-mo\-re, since $\widetilde{u}_0 = \sqrt{\delta}\,
  u_0\equiv 0$
  a.e.\,in $\R^n\setminus\Omega$, one also has
  $$v_k = \widetilde{u}_0-C_k = -C_k < 0\quad\text{on $\R^n\setminus\Omega$},$$
  and thus $w_k = (v_k)_+\in\mathbb{X}(\Omega)$
  (remind that $\Omega$ is bounded).
  We are then entitled to use the function $w_k$ as a test function
  in \eqref{eq:uzeroPDE}, obtaining
  \begin{equation} \label{eq:PDEsolveduzerotilde}
  \begin{split}
   \int_{\Omega}\langle \nabla \widetilde{u}_0,\nabla w_k\rangle\, dx
   & +  \iint_{\R^{2n}}\frac{(\widetilde{u}_0(x)-\widetilde{u}_0(y))(w_k(x)-w_k(y))}{|x-y|^{n+2s}}\, d x\, dy
   \\
   & = \lambda_\LL(\Omega)\,\int_{\Omega}\widetilde{u}_0w_k\, dx.
   \end{split}
  \end{equation}
  To proceed further we notice that, for a.e.\,$x,y\in\R^n$, one has
  \begin{equation}\label{EAFS2}
	 \begin{split} |w_{k}(x)-w_{k}(y)|^2
	 & = |(v_{k})_+(x)-(v_{k})_+(y)|^2 \\[0.2cm] 
	 & \leq ( (v_{k})_+(x)-(v_{k})_+(y))(v_{k}(x)-v_{k}(y)) \\[0.2cm]
	&= (w_{k}(x)-w_{k}(y))(\widetilde{u}_0 (x)-\widetilde{u}_0(y)).
	\end{split}
	\end{equation}
  Moreover, taking into account the definition of $w_k$, we get
  \begin{equation} \label{eq:localpart}
   \int_\Omega\langle \nabla\widetilde{u}_0,\nabla w_k\rangle \, d x =
	\int_{\Omega\cap\{ \widetilde{u}_0 > C_k\}}
	\langle \nabla\widetilde{u}_0,\nabla v_k\rangle\, dx =
	\int_\Omega|\nabla w_{k}(x)|^2\, dx.
  \end{equation}
  Gathering together \eqref{eq:PDEsolveduzerotilde}, 
  \eqref{EAFS2} and \eqref{eq:localpart}, we obtain
  \begin{equation} \label{eq:estimnablawkI} 
    \int_\Omega|\nabla w_{k}(x)|^2\, dx
   \leq \lambda_\LL(\Omega)\,\int_{\Omega}\widetilde{u}_0w_k\, dx.
  \end{equation}
  We then claim that, for every $k\geq 1$, one has
  \begin{equation} \label{eq:tildeuzeroleqwk}
   \widetilde{u}_0 < 2^{k}w_{k-1}\quad\text{on $\{w_{k} > 0\}$}.
  \end{equation}
  Indeed, if $x\in\{w_k > 0\}$, we have 
  $\widetilde{u}_0(x) > C_{k} > C_{k-1}$; as a consequence,
  \begin{align*}
   2^{k}w_{k-1} (x)& \geq (2^k-1)w_{k-1}(x) = 
   (2^{k}-1)(\widetilde{u}_0(x)-C_{k-1}) \\
   & = \widetilde{u}_0 (x)+ (2^k-2)\big(\widetilde{u}_0(x)-C_k\big) > \widetilde{u}_0(x),
  \end{align*}
  which is exactly the claimed \eqref{eq:tildeuzeroleqwk}. 
  By combining~\eqref{eq:tildeuzeroleqwk} with \eqref{eq:estimnablawkI}, and taking
  into account that $w_k\leq w_{k-1}$ a.e.\,in $\R^n$, for every $k\geq 1$ we get
  \begin{equation} \label{eq:estimnablawkII}
   \begin{split}
    & \int_\Omega|\nabla w_{k}(x)|^2\, dx 
    \leq \lambda_\LL(\Omega)\,\int_{\{w_k > 0\}}\widetilde{u}_0 w_k\, dx \\[0.1cm]
    & \qquad \leq 2^k\cdot\lambda_\LL(\Omega)\,\int_{\{w_k > 0\}}
    w_{k-1}w_k\, dx \\[0.1cm]
    & \qquad \leq 2^k\cdot\lambda_\LL(\Omega)\,\int_{\Omega}w_{k-1}^2\, dx
    = 2^k\cdot\lambda_\LL(\Omega)\,\|w_{k-1}\|^2_{L^2(\Omega)} \\[0.1cm]
    & \qquad = 2^k\cdot\lambda_\LL(\Omega)\,U_{k-1}.
   \end{split}
  \end{equation}
  We now turn to estimate from below the left-hand side of 
  \eqref{eq:estimnablawkII}. To this end we first observe that, owing to the
  very definition of $w_k$, we have
	\begin{equation*}
	\{ w_{k}>0\} = \{\widetilde{u}_0 > C_k\} 
	\subseteq \{w_{k-1}>{2^{-k}}\}\qquad{\mbox{ for all }}k\geq 1.
	\end{equation*}
	As a consequence, we obtain
	\begin{equation} \label{eq:Uklower}
	 \begin{split}
	 U_{k-1} & = \int_{\Omega}w_{k-1}^2\, dx
	 \geq \int_{\{w_{k-1} > 2^{-k}\}}w_{k-1}^2\, dx \\[0.15cm]
	 & \geq 2^{-2k}\,\big|\{w_{k-1} > 2^{-k}\}\big|
	 \geq 2^{-2k}\big|\{w_k > 0\}|.
	 \end{split}
	\end{equation}
	Using the H\"older inequality (with exponents $2^*/2$ and $n/2$, being~$2^*:=\frac{2n}{n-2}$
	the Sobolev exponent), jointly with the
	Sobolev inequality, from 
	\eqref{eq:estimnablawkII}-\eqref{eq:Uklower}
	we obtain the following estimate for every~$k\ge1$
	\begin{equation} \label{eq:finaleperconcludere}
	 \begin{split}
	 U_k &= \|w_k\|^2_{L^2(\Omega)}
	 \leq \bigg(\int_\Omega w_k^{2^*}\, d x\bigg)^{2/{2^*}}\,
	 \big|\{w_k > 0\}\big|^{2/n} \\[0.1cm]
	 & \leq \mathbf{c}_S\,\int_\Omega|\nabla w_k|^2\, dx
	 \cdot
	 \big|\{w_k > 0\}\big|^{2/n} \\[0.1cm]
	 & \leq \mathbf{c}_S\,\big(2^k\,\lambda_\LL(\Omega)U_{k-1}\big)\,\big(
	 2^{2k}U_{k-1}\big)^{2/n} \\[0.2cm]
	 & = \mathbf{c}'\,\big(2^{1+4/n}\big)^{k-1}\,U_{k-1}^{1+2/n}
,
	 \end{split}
	\end{equation}
where~$\mathbf{c}_S$ is the Sobolev constant and
 $\mathbf{c}' := 2^{1+4/n}\,\mathbf{c}_S\,\lambda_\LL(\Omega)$.

With estimate \eqref{eq:finaleperconcludere} at hand, we are finally
	ready to complete the proof. Indeed, since $2/n > 0$ and
	$$\eta := 2^{1+4/n} > 1,$$
	we deduce from \cite[Lemma 7.1]{Giusti} that $U_k\to 0 $ as $k\to\infty$, provided that
	\begin{equation*}
	 U_0 = \|\widetilde{u}_0\|^2_{L^2(\Omega)} = \delta
	< (\mathbf{c}')^{-n/2}\,\eta^{-n^2/4}. 
	\end{equation*}
	As a consequence, if $\delta > 0$ is small enough, we obtain
	$$0 = \lim_{k\to\infty} U_k = 
	\lim_{k\to\infty}\int_\Omega(\widetilde{u}_0-C_k)_+^2\, dx
	= \int_{\Omega}(\widetilde{u}_0-1)_+^2\, dx.$$
	Bearing in mind that $\widetilde{u}_0 = \sqrt{\delta}\,u_0$ (and $u_0\geq 0$), we then get
	$$0\leq u_0\leq \frac{1}{\sqrt{\delta}}\qquad\text{a.e.\,in $\Omega$},$$
	from which we conclude that $u_0\in L^\infty(\Omega)$.
 \end{proof} 
  Now, if $\de\Omega$ is sufficiently
  regular, by com\-bi\-ning Theorem \ref{thm:uzeroglobalBd}
  with Theorems \ref{thm:W2pBL}-\ref{thm:solvabLL} we obtain the next key result.
  
  \begin{theorem} \label{thm:globalCalfauzero}
   Let~$s\in (0,1)$ and $\alpha\in (0,1)$ be such that
   $$\alpha+2s < 1.$$
  Let us suppose that $\de\Omega$ is of class $C^{2,\alpha}$. Denoting
   by $u_0\in\mathbb{X}(\Omega)$ the prin\-ci\-pal
   eigenfunction of $\LL$ in $\Omega$ \emph{(}according
   to Theorem \ref{thm.existencelambdaLL}\emph{)}, we have
   $$u_0\in C_b(\R^n)\cap C^{2,\alpha}(\overline{\Omega}).$$
   Moreover, $u_0$ is a \emph{classical solution} of \eqref{eq.PDEweak}, that is,
   \begin{equation}\label{3.2BIS}
   \begin{cases}
   \LL u_0 = \lambda_\LL(\Omega)u_0 & \text{pointwise in $\Omega$}, \\
   u_0 \equiv 0 & \text{in $\R^n\setminus\Omega$}.
   \end{cases}\end{equation}
   Furthermore, $u_0$ satisfies the following additional properties:
   \begin{enumerate} 
   \item $u_0 > 0$ pointwise in $\Omega$;
   \item denoting by $\nu$ the external unit normal to $\de\Omega$, we have 
   \begin{equation} \label{eq.denuneguzero}
    \de_{\nu}u(\xi) < 0\quad{\mbox{ for all }} \xi\in\de\Omega.
   \end{equation}
   \end{enumerate}
  \end{theorem}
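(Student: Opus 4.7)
The plan is a short bootstrap argument built on the auxiliary regularity results of Section~\ref{sec.main}, followed by the strong maximum principle to get positivity and by Theorem~\ref{thm:Hopf} to get the boundary derivative bound.

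Set $f:=\lambda_\LL(\Omega)\,u_0$, so that \eqref{eq:uzeroPDE} identifies $u_0$ as the unique weak solution of the Dirichlet problem $(\mathrm{D})_f$ in the sense of Definition~\ref{def.Weak}. By Theorem~\ref{thm:uzeroglobalBd} one has $u_0\in L^\infty(\Omega)$, hence $f\in L^\infty(\Omega)$. Since $\partial\Omega$ is $C^{2,\alpha}$, and in particular $C^{1,1}$, Theorem~\ref{thm:W2pBL} gives $u_0\in C^{1,\beta}(\overline\Omega)$ for some $\beta\in(0,1)$; inspecting its proof (which uses $W^{2,p}$-regularity with $p$ free to be chosen arbitrarily large, thanks to $f\in L^\infty$) allows $\beta$ to be picked at least as large as $\alpha$. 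In particular $f=\lambda_\LL(\Omega)\,u_0\in C^\alpha(\overline\Omega)$.

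Now the standing assumption $\alpha+2s<1$ forces $s\in(0,1/2)$, so the hypotheses of Theorem~\ref{thm:solvabLL} are in place. Applying that theorem to the datum $f\in C^\alpha(\overline\Omega)$ yields a weak solution of $(\mathrm{D})_f$ which belongs to $C_b(\R^n)\cap C^{2,\alpha}(\overline\Omega)$ and is automatically a classical solution; by the uniqueness part of Theorem~\ref{thm:existenceBasic} this solution coincides with $u_0$. This gives at once the claimed global regularity of $u_0$ and the pointwise equations in \eqref{3.2BIS}.

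For the remaining assertions, note that $u_0\geq 0$, $u_0\not\equiv 0$ and $\LL u_0=\lambda_\LL(\Omega)\,u_0\geq 0$ pointwise in $\Omega$. The strong maximum principle \cite[Theorem 1.4]{BDVV} rules out an interior zero, proving (1). For (2) I would fix $\xi\in\partial\Omega$ and invoke Theorem~\ref{thm:Hopf} with $c_0=0$: the regularity $u_0\in C_b(\R^n)\cap C^2(\overline\Omega)$ just established, the inequality $\LL u_0\geq 0$, the identity $u_0\equiv 0$ on $\partial\Omega$ (so on $B_{\overline\varepsilon}(\xi)\cap\partial\Omega$), the global sign $u_0\geq 0$ in $\R^n$, the non-triviality of $u_0$, and the $C^2$-smoothness of $\partial\Omega$ are all at hand, and the Hopf-type conclusion \eqref{LLIPO2} is precisely \eqref{eq.denuneguzero}. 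The only genuinely delicate point is completing the bootstrap so that $f=\lambda_\LL(\Omega)\,u_0$ reaches the class $C^\alpha(\overline\Omega)$ required by Theorem~\ref{thm:solvabLL}; the restriction $\alpha+2s<1$ is exactly what both localizes $s$ in $(0,1/2)$ and lets us promote the $C^{1,\beta}$ regularity from Theorem~\ref{thm:W2pBL} to a Hölder exponent no smaller than $\alpha$.
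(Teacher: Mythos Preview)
Your proof is correct and follows essentially the same bootstrap argument as the paper: use Theorem~\ref{thm:uzeroglobalBd} to get $f=\lambda_\LL(\Omega)u_0\in L^\infty$, invoke Theorem~\ref{thm:W2pBL} (with the freedom in $p$ to reach any $\beta\in(0,1)$) so that $f\in C^\alpha(\overline\Omega)$, then apply Theorem~\ref{thm:solvabLL}, and finish with the strong maximum principle and Theorem~\ref{thm:Hopf}. The paper also claims $u_0\in C^{1,\beta}(\overline\Omega)$ for \emph{every} $\beta\in(0,1)$ by the same $W^{2,p}$ mechanism you identify, so your reading of the proof of Theorem~\ref{thm:W2pBL} matches exactly.
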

  
  \begin{proof} First of all we observe that, setting $f:= \lambda_\LL(\Omega)\,u_0$,
  from Theorem \ref{thm:uzeroglobalBd} we infer that $f\in L^\infty(\Omega)$; as a consequence,
  since $u_0$ is the (unique) weak solution of
   $$(\mathrm{D}_f)\qquad\quad\begin{cases}
   \LL u = f & \text{in $\Omega$}, \\
   u \equiv 0 &\text{in $\R^n\setminus\Omega$},
   \end{cases}
   $$
   (and $\de\Omega$ is of class $C^{2,\alpha}$), we deduce from Theorem \ref{thm:W2pBL} that 
   $$\text{$u_0\in C^{1,\beta}(\overline{\Omega})$ for
   every $\beta\in (0,1)$}.$$ 
   In particular, $f \in C^\alpha(\overline{\Omega})$. In view of this fact,
   and taking into account our as\-sump\-tions on $s$ and on $\de\Omega$, 
   we can apply 
   Theorem 	 
   \ref{thm:solvabLL} to $u_0$: therefore,
	$$u_0\in C_b(\R^n)\cap C^{2,\alpha}(\overline{\Omega}),$$
   and $u_0$ is a \emph{classical solution} of $(\mathrm{D}_f)$, that is,
   $u_0$ satisfies \eqref{3.2BIS}. 
   To complete the proof, we now turn to prove the validity of
   properties (1)-(2).
   \medskip
   
   \noindent (1)\,\,Since $u_0$ satisfies \eqref{3.2BIS} 
   and $u_0\geq 0$ in $\Omega$, we have
   \begin{equation} \label{eq:solveduzerosign}
   \text{$\LL u_0 = \lambda_\LL(\Omega)u_0\geq 0$ pointwise in $\Omega$ and $u_0\equiv 0$
   on $\R^n\setminus\Omega$};
   \end{equation}
   hence, we can invoke the Strong Maximum Principle for $\LL$ in \cite[Theorem 1.4]{BDVV},
   ensuring that either $u_0\equiv 0$ in $\R^n$ or $u_0 > 0$ in $\Omega$.
   On the other hand, since  $u_0\not\equiv 0$, 
   we conclude that
   $u_0 > 0$ in $\Omega$, as desired.
   \medskip
   
   \noindent (2)\,\,Since 
   $u_0 > 0$ in $\Omega$ and $u_0\equiv 0$ on $\de\Omega$, by \eqref{eq:solveduzerosign} we derive that
   \begin{itemize}
    \item[(a)] $\LL u_0\geq 0$ pointwise in $\Omega$;
    \vspace*{0.05cm}
    
    \item[(b)] $u_0(\xi) = \min_{\de\Omega}u_0 = 0$ for every $\xi\in\de\Omega$.
   \end{itemize}
   As a consequence, taking into account that $u_0\in C^{1}(\overline{\Omega})$
   and $\de\Omega$ is of class $C^{2,\alpha}$, we are entitled to apply
   the Hopf-type Theorem \ref{thm:Hopf}, obtaining that
   $$\de_{\nu}u_0(\xi) < 0\qquad{\mbox{ for all }} \xi\in\de\Omega.$$
   This is precisely the desired \eqref{eq.denuneguzero}, and the proof is complete.
  \end{proof}
  
 We want to stress that one can get almost everywhere positivity of~$u_0$ by means of the strong maximum principle for weak solutions proved in \cite{BSM,BiMuVe}.
\medskip

  Finally, by exploiting Theorem \ref{thm:globalCalfauzero},
  we can prove Lemma \ref{lem:convexity} below. We stress that in the purely local regime, the following lemma has been proved in \cite{SWYY} without any restriction on $\delta$. On the contrary, the same property is not known to hold in the purely fractional case.
  As a matter of fact, convexity and superharmonicity properties
  for fractional eigenfunctions can reserve surprisingly
  severe difficulties, see e.g.~\cite{KASI}.
  
  \begin{lemma} \label{lem:convexity}
Let $s\in (0,1/2)$ 
and let us suppose that $\Omega$ is
uniformly convex, and 
  that $\de\Omega$ is
  of class $C^3$. Then, denoting by $u_0$ the principal eigenfunction of $\LL$ in $\Omega$,
  there exists $\delta_0 > 0$ such that, \emph{for every
  fixed $\delta\in (0,\delta_0)$}, the set
  $$\Omega_\delta := \big\{x\in\Omega:\,u_0(x) > \delta\big\}\quad
  \text{is convex}.$$
  Moreover, $\de\Omega_\delta$ is of class $C^1$.
  \end{lemma}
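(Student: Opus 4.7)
The plan is to express $\partial\Omega_\delta$ as a smooth normal graph over $\partial\Omega$ for all small $\delta>0$, show that this graph is $C^{2}$-close to $\partial\Omega$ as $\delta\to 0$, and then transfer the uniform convexity of $\partial\Omega$ to $\partial\Omega_\delta$; the convexity of $\Omega_\delta$ itself will then follow from a standard local-to-global convexity principle.

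First, since $s\in(0,1/2)$ and $\partial\Omega$ is of class $C^3$, I will apply Theorem~\ref{thm:globalCalfauzero} with any $\alpha\in(0,1-2s)$ to obtain $u_0\in C^{2,\alpha}(\overline{\Omega})$, together with the strict Hopf inequality $\partial_\nu u_0(\xi)<0$ at every $\xi\in\partial\Omega$. Compactness of $\partial\Omega$ and continuity of $\nabla u_0$ then yield a uniform bound $\partial_\nu u_0\leq -c_0$ on $\partial\Omega$ for some $c_0>0$. Using the $C^3$-regularity of $\partial\Omega$, I fix a tubular neighborhood radius $r_0>0$ small enough that
$$\Psi(\xi,t):=\xi-t\,\nu(\xi),\qquad (\xi,t)\in\partial\Omega\times(-r_0,r_0),$$
is a $C^2$-diffeomorphism onto an open neighborhood $\mathcal{N}$ of $\partial\Omega$, with $t>0$ pointing into $\Omega$. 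Setting $v(\xi,t):=u_0(\Psi(\xi,t))$, one has $v\in C^{2,\alpha}$, $v(\xi,0)=0$, and $\partial_t v(\xi,0)\geq c_0$ uniformly in $\xi\in\partial\Omega$.

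A uniform Implicit Function Theorem argument then produces $\delta_1>0$ and, for each $\delta\in(0,\delta_1)$, a function $\tau_\delta\in C^{2,\alpha}(\partial\Omega)$ solving $v(\xi,\tau_\delta(\xi))=\delta$ and satisfying $\tau_\delta\to 0$ in $C^2(\partial\Omega)$ as $\delta\to 0$; the uniformity is controlled by the lower bound $\partial_t v\geq c_0/2$ in a $t$-neighborhood of $0$, valid on all of $\partial\Omega$ by continuity. Since $u_0$ is continuous, positive on $\Omega$ and vanishes only on $\partial\Omega$, fixing a smaller tubular collar $\mathcal{N}'\Subset\mathcal{N}$ yields $m_0:=\min_{\overline{\Omega}\setminus\mathcal{N}'}u_0>0$. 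For $\delta<\min(\delta_1,m_0)$, every point of the level set $\{u_0=\delta\}$ must lie in $\mathcal{N}$, and by uniqueness in the IFT it coincides with the normal graph $\{\Psi(\xi,\tau_\delta(\xi)):\xi\in\partial\Omega\}$. This realizes $\partial\Omega_\delta$ as a $C^{2,\alpha}$ normal graph over $\partial\Omega$, in particular proving the claimed $C^1$-regularity.

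For convexity, the second fundamental form of $\partial\Omega_\delta$ is expressible in terms of the second fundamental form of $\partial\Omega$ and of $\tau_\delta$, $\nabla\tau_\delta$, $\nabla^2\tau_\delta$; as $\tau_\delta\to 0$ in $C^2$, it converges uniformly to the second fundamental form of $\partial\Omega$, which by hypothesis is uniformly positive definite. Hence for all sufficiently small $\delta$, $\partial\Omega_\delta$ is itself uniformly convex. Since $\Omega_\delta$ is an open bounded subset of $\R^n$ whose boundary, being $C^2$-diffeomorphic to $\partial\Omega$ via $\xi\mapsto\Psi(\xi,\tau_\delta(\xi))$, is connected and locally convex at each point, Tietze's classical convexity theorem gives that $\Omega_\delta$ is convex. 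The delicate step, and the one on which the whole argument hinges, is this uniform quantitative passage from $\tau_\delta\to 0$ in $C^2$ to positivity of the shape operator of the graph; this is a concrete but standard Weingarten computation for a normal graph over a $C^2$ hypersurface, and the uniform convexity of $\Omega$ together with the compactness of $\partial\Omega$ are precisely what ensure that the perturbative error is dominated by the principal curvatures of $\partial\Omega$ uniformly.
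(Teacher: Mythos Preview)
Your proposal is correct and follows essentially the same strategy as the paper's proof: use the $C^{2,\alpha}$ regularity of $u_0$ together with the Hopf bound $\partial_\nu u_0\le -c_0$ to parametrize $\partial\Omega_\delta$ as a small $C^2$ perturbation of $\partial\Omega$ via the Implicit Function Theorem, and then transfer the uniform convexity of $\partial\Omega$ to $\partial\Omega_\delta$ by continuity of second-order data. The paper carries this out in local Cartesian charts, writing $\partial\Omega$ as $\{x_n=\gamma(x')\}$ and $\partial\Omega_\delta$ as $\{y_n=\Gamma(y',\delta)\}$ after an additional reparametrization by the Inverse Function Theorem, then checking $D^2_{y'}\Gamma(\cdot,0)=D^2\gamma$ directly; you instead work intrinsically with the second fundamental form of the normal graph and invoke Tietze's theorem for the local-to-global step. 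Your route is slightly more economical in that it avoids the explicit chart reparametrization, at the cost of citing the Weingarten formula for normal graphs and Tietze's theorem as black boxes. One small correction: since $\partial\Omega\in C^3$ gives $\nu\in C^2$ and hence $\Psi\in C^2$, the composition $v=u_0\circ\Psi$ is only $C^2$, so $\tau_\delta\in C^2(\partial\Omega)$ rather than $C^{2,\alpha}$; this does not affect your argument.
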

  \begin{proof}
   We split the proof into three steps.
   \medskip
   
   \textsc{Step I.} In this first step we prove that,
   if $\delta > 0$ is sufficiently small, the su\-per\-le\-vel set
   $\Omega_\delta$ can be realized as a suitable ``deformation'' of $\Omega$.
   To this end we first notice that, owing to the assumptions on $\Omega$,
   Theorem \ref{thm:globalCalfauzero} ensures that
   \begin{itemize}
    \item[(a)] $u_0\in C_b(\R^n)\cap C^{2,\alpha}(\overline{\Omega})$, 
    for every~$\alpha\in (0,1)$ such that~$\alpha+2s < 1$,
     and $u_0$
    is a classical solution of \eqref{eq.PDEweak};
    \vspace*{0.05cm}
    \item[(b)] $u_0 > 0$ pointwise in $\Omega$;
    \vspace*{0.05cm}
    \item[(c)] $\de_\nu u_0(\xi) < 0$ for every $\xi\in\de\Omega$.
   \end{itemize}
   Moreover, since $u_0\equiv 0$ on $\de\Omega$, we have
   $$\text{either $\nabla u_0
   = |\nabla u_0|\nu$ or $\nabla u_0
   = -|\nabla u_0|\nu$ pointwise in $\de\Omega$}.$$
   Now, since property (c)
   implies that $\langle \nabla u_0,\nu\rangle < 0$ on $\de\Omega$, we get
   \begin{equation} \label{eq:nablauzeronu}
    \nabla u_0(\xi) = -|\nabla u_0(\xi)|\nu(\xi)\qquad{\mbox{ for all }} \xi\in\de\Omega;
   \end{equation}
   as a consequence, since $u_0\in C^{2,\alpha}(\overline{\Omega})$, again by 
   property (c) we have
   \begin{equation} \label{eq:bounda}
    0 > -a := \max_{\de\Omega}\de_{\nu}u \geq \langle\nabla u_0(\xi),\nu(\xi)
   \geq - |\nabla u_0(\xi)|\quad{\mbox{ for all }} \xi\in\de\Omega.
   \end{equation}
   Hence, it is possible to find some $d_0 > 0$ such that
   \begin{equation} \label{eq:nablauzerogeq}
    |\nabla u_0|\geq \frac{a}{2}\quad\text{on
   $\Omega_{0} := \big\{x\in\Omega:\,d(x,\de\Omega)< d_0\big\}$}.
   \end{equation}
   Furthermore, bearing in mind that $\de\Omega$ is of class $C^3$, by possibly
   shrinking $d_0$ we can also suppose that
   (see, e.g., \cite[Section~14.6]{GT})
   \begin{itemize}
    \item[(i)] $d(\cdot) := d(\cdot,\de\Omega)$ is of class $C^3$ on $\Omega_0$;
    \item[(ii)] for every $x\in\Omega_0$ there exists a unique $\xi\in\de\Omega$ such that
    $$x-\xi = -d(x)\,\nu(\xi).$$
   \end{itemize}
  For a fixed $\xi\in\de\Omega$, we then consider the function
  $$F(t) := u_0\big(\xi-t\nu(\xi)\big)\qquad {\mbox{ for }} 0\leq t < d_0.$$
  Since $u_0$ is of class $C^{2,\alpha}$ on $\overline{\Omega}$, by 
  \eqref{eq:nablauzeronu}-\eqref{eq:bounda} we have
  \begin{align*}
   F'(t) & = -\langle \nabla u_0\big(\xi-t\nu(\xi)\big),\nu(\xi)\rangle  \\
  &  = -\langle\nabla u_0(\xi),\nu(\xi)\rangle - \big[
  \langle \nabla u_0\big(\xi-t\nu(\xi)\big)-\nabla u_0(\xi),\nu(\xi)\rangle\big] \\
  & = |\nabla u_0(\xi)| - t\|u\|_{C^2(\overline{\Omega})}
  \geq a-t\|u\|_{C^2(\overline{\Omega})};
  \end{align*}
  as a consequence, again by shrinking $d_0$ if necessary, we obtain
  \begin{equation} \label{eq:Fprimelowerbd}
   F'(t)\geq \frac{a}{2}\qquad{\mbox{ for all }}0\leq t < d_0.
  \end{equation}
  In particular, $F$ is \emph{strictly increasing} on $[0,d_0)$, and
  \begin{equation} \label{eq:Fincreasing}
   F(t) = F(t)-F(0) \geq \frac{ta}{2}\qquad{\mbox{ for all }}0\leq t < d_0.
   \end{equation}
  To proceed further, we consider the set
  $\mathcal{O} := \big\{x\in\Omega:\,d(x,\de\Omega) > d_0/2\big\}\subseteq\Omega$
  and we notice that, since $u_0 > 0$ in $\Omega$, we have
  \begin{equation} \label{eq:defczero}
   c_0 := \inf_{\mathcal{O}}u_0 > 0.
   \end{equation}
  Hence, we define
  \begin{equation} \label{eq:choicedeltazero}
   \delta_0 := \frac{1}{2}\,\min\Big\{c_0,\frac{ad_0}{4}\Big\} > 0.
   \end{equation}
  \vspace*{0.05cm}
  
  \noindent Taking into account 
  \eqref{eq:Fincreasing}, it is immediate to see that, for every $\delta\in [0,\delta_0)$, 
  there exists 
  a \emph{unique} point
  $t = t(\xi,\delta)\in [0,d_0)$
  such that 
  $$F\big(t(\xi,\delta)\big) = u_0\big(\xi-t(\xi,\delta)\nu(\xi)\big) = \delta;$$
  then, we set
  \begin{equation*}
   \mathcal{V}_\delta := \big\{\xi-t\nu(\xi):\,\xi\in\de\Omega,\,0\leq t\leq t(\xi,\delta)\big\},\quad
  O_\delta := \Omega\setminus\mathcal{V}_\delta,
  \end{equation*}
  see Figure~\ref{inverse}.
  \begin{center}
  \begin{figure}[ht]
   \includegraphics[scale=0.13]{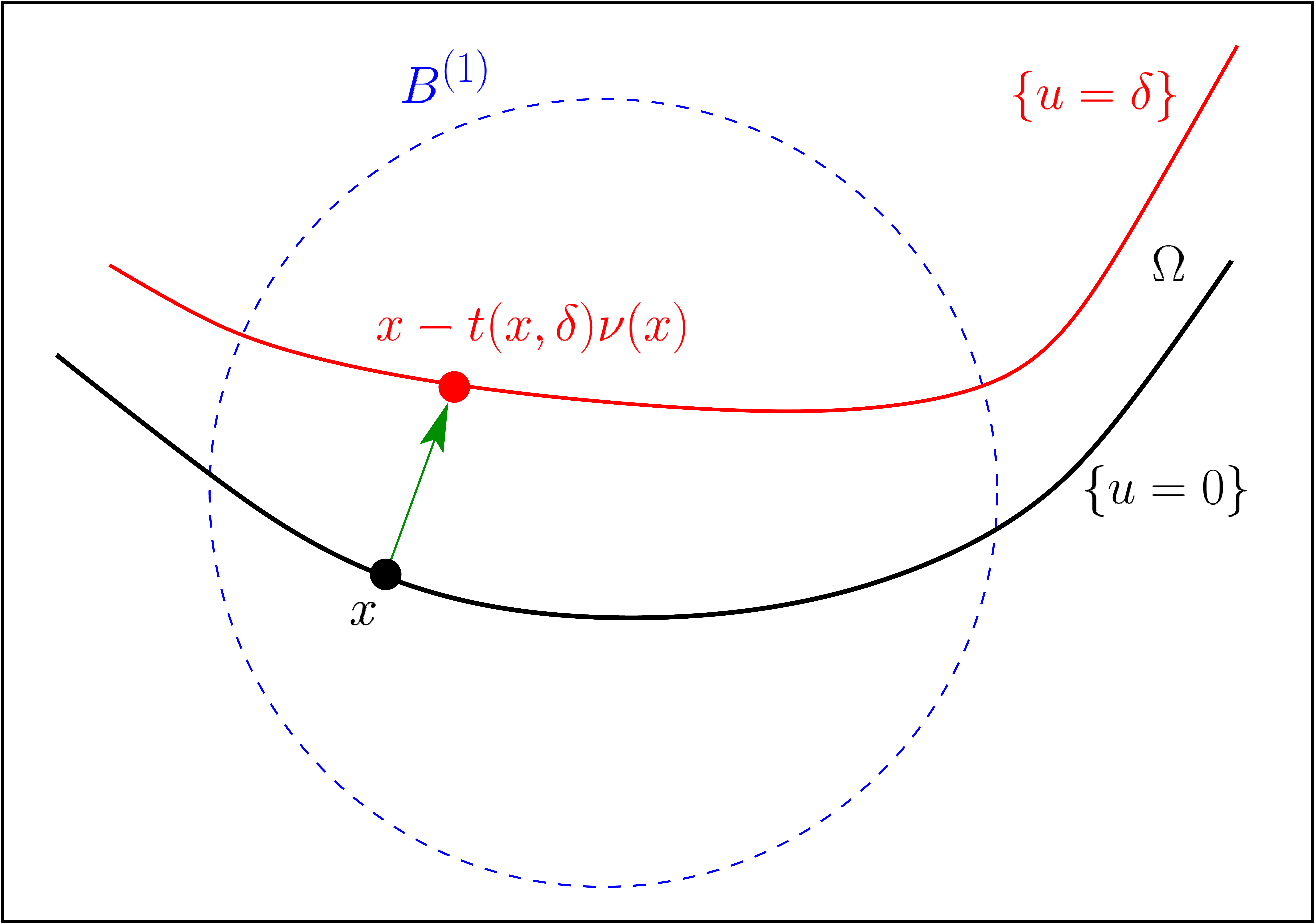}
    \caption{\sl \footnotesize The construction of $O_\delta$} \label{inverse}
   \end{figure}
  \end{center}
  Before proceeding we highlight, for future reference, the following fact: since
  $F(0) = 0$ and $F$ is strictly increasing on $[0,d_0)$, we have
  \begin{equation} \label{eq.txizero}
   t(\xi,0) = 0\quad\text{for every $\xi\in\de\Omega$}.
  \end{equation}
  We now turn to prove that, if $0<\delta<\delta_0$, one has
  \begin{equation} \label{eq:claimOdelta}
   O_\delta = \Omega_\delta = \big\{x\in\Omega:\,u_0(x) > \delta\big\}.
  \end{equation}
  To this end we first observe that, by the very definition of $t(\xi,\delta)$, we have
  $u_0\equiv \delta$ on $\de O_\delta$; moreover, since $F$ is strictly increasing
  on $(0,d_0)$, one also has
  $$u_0\big(\xi-t\nu(\xi)\big) = F(t) \leq F\big(t(\xi,\delta)\big) = \delta
  \qquad{\mbox{ for all }} 0\leq t\leq t(\xi,\delta),$$
  and thus $u_0\leq \delta$ out of $O_\delta$.
  Finally, we show that
  $$u(x) > \delta\quad\text{for every $x\in O_\delta$}.$$
  Let then $x\in O_\delta$ be fixed. If $x\in\mathcal{O}_0$, by 
  \eqref{eq:defczero}-\eqref{eq:choicedeltazero} we have
  $$u_0(x)\geq c_0 > \delta_0 > \delta.$$
  If, instead, $x\notin\mathcal{O}_0$, one has $d(x) = d(x,\de\Omega)\leq d_0/2$, and thus
  $x\in\Omega_0$. By property (ii) of $\Omega_0$, we know that there exists a \emph{unique}
  $\xi\in\de\Omega$ such that 
  $$x = \xi-d(x)\nu(\xi);$$
  on the other hand, since $x\in O_\delta = \Omega\setminus\mathcal{V}_\delta$, we necessarily have
  $$d_0 > d(x) > t(\xi,\delta).$$
  This, together with the strict monotonicity of $F$ on $(0,d_0)$, implies that
  $$u_0(x) = u_0\big(\xi-d(x)\nu(\xi)\big) = F(d(x)) > F(t(\xi,\delta)) = \delta,$$
  and completes the proof of \eqref{eq:claimOdelta}.
  \medskip
  
  \textsc{Step II.} We now turn to prove that, if $\delta_0 > 0$ is as in Step I and
  $\delta\in (0,\delta_0)$, the boundary of $\de\Omega_\delta$ is of class $C^1$.
  To this end we observe that, by crucially exploiting
  identity \eqref{eq:claimOdelta} and the very
  definition of $O_\delta$, one has
  \begin{equation} \label{eq:deOmegaexplicit}
  \de\Omega_\delta = \de O_\delta  = \big\{\xi-t(\xi,\delta)\nu(\xi):\,\xi\in\de\Omega\big\}
  \subseteq \{x\in\Omega:\,d(x) < d_0\}.
  \end{equation}
  This, together with \eqref{eq:nablauzerogeq}, shows that
  $$\text{$|\nabla u_0|\neq 0$ on $\de\Omega_\delta$},$$ 
  and thus
  $\de\Omega_\delta$ is of class $C^1$ (actually, it is of class $C^{2,\alpha}$).
  \medskip
  
  \textsc{Step III.} In this last step we prove that, if $\delta \in (0,\delta_0)$
  (where $\delta_0$ is as in Step~I), the set $\Omega_\delta$ is convex.
  As in Step II, we use in crucial way identity \eqref{eq:claimOdelta}.
  \vspace*{0.05cm}
  
  To begin with, we consider a covering 
  of~$\partial\Omega$
  of finitely many small open balls
  $$\{B^{(i)}\}_{i\in\{1,\dots,N\}}$$
  such that for every~${i\in\{1,\dots,N\}}$
  we can write~$2B^{(i)}\cap\partial\Omega$ as a graph
  in some coordinate direction (where~$2B^{(i)}$
  is the concentric ball of~$B^{(i)}$ with twice the radius of~$B^{(i)}$).
  For~$\delta>0$ sufficiently small, we can
  suppose that
  $$\partial\Omega_\delta
  \subset B^{(1)}\cup\cdots\cup B^{(N)}.$$
  In this setting, it suffices to check that~$B^{(i)}\cap \partial\Omega_\delta$
  can locally be written as a graph of a convex function
  for all~${i\in\{1,\dots,N\}}$. Without loss of generality, we argue for~$i=1$
  and assume that 
  $$2B^{(1)}\cap\partial\Omega = \{x_n=\gamma(x')\}$$
  for some~$\gamma\in C^2(\R^{n-1})$ satisfying
  \begin{equation}\label{KSMGAMMAdue}
  D^2\gamma\ge a_0\,{\rm Id}\qquad(\text{for some $a_0 > 0$}).
 \end{equation} 
  Thus, by \eqref{eq:deOmegaexplicit},
  in this chart~$\partial\Omega_\delta$
 can be locally parameterized by
 \begin{equation}\label{LOCALCHART}
 \begin{split}&(x',\gamma(x'))-t(x',\gamma(x'),\delta)\nu(x',\gamma(x'))\\=\,&
 \Big(
  x'-t(x',\gamma(x'),\delta)\nu'(x',\gamma(x')),\,
  \gamma(x')-t(x',\gamma(x'),\delta)\nu_n(x',\gamma(x'))\Big),
  \end{split}
  \end{equation}
	where we used the notation~$\nu=(\nu',\nu_n)\in\R^{n-1}\times\R$
	and~$x'$ belongs to a domain of~$\R^{n-1}$.
 We now introduce the function
$$(x',t)\longmapsto G(x',\delta,t):=u\big(
(x',\gamma(x'))-t\nu(x',\gamma(x'))\big)-\delta.$$
Let also~$g(x',\delta):=t(x',\gamma(x'),\delta)$.
We notice that, as in~\eqref{eq:Fprimelowerbd}, for~$|t|$ small enough, 
$$ \partial_t G(x',\delta,t)=-
\nabla u\big(
(x',\gamma(x'))-t\nu(x',\gamma(x'))\big)
\cdot\nu(x',\gamma(x'))\ge\frac{a}{2}.
$$
Also~$G$ is locally a~$C^2$ function and~$G\big(x',\delta,g(x',\delta)\big)=0$.
As a consequence,
we find that~$g$ is locally a~$C^2$ function. We also observe that, by~\eqref{LOCALCHART},
the set~$\partial\Omega_\delta$
can be locally parameterized by
\begin{equation}\label{LOCALPAPRAGP}
\Big(
x'-g(x',\delta)\nu'(x',\gamma(x')),\,
\gamma(x')-g(x',\delta)\nu_n(x',\gamma(x'))
\Big).\end{equation}
Now we set
$$ H(x',\delta):=\Big(x'-g(x',\delta)\nu'(x',\gamma(x')),\,\delta\Big).$$
Observe that~$H$ is locally a~$C^2$ function.
Moreover, by \eqref{eq.txizero} we have
\begin{equation}\label{LOCALPAPRAGP-BIS}
g(x',0)=t(x',\gamma(x'),0)=0\end{equation}
and, up to a rotation, we can focus our analysis at a point~$x_0'$
for which
\begin{equation}\label{PIATTO}
\nu(x_0',\gamma(x'_0))=-e_n.
\end{equation}
Therefore
\begin{equation}\label{CORRI}
{\mbox{the Jacobian matrix of~$H$ at~$(x'_0,0)$ is the identity}}
\end{equation}
and we can exploit the Inverse Function Theorem,
denote by~$I$ the inverse function of~$H$ in the vicinity of~$(x'_0,0)$
and have that~$I$ is also a $C^2$ function. Using the notation~$H=(H',H_n)$
and~$I=(I',I_n)$,
we have that~$H(I(y',\varepsilon))=(y',\varepsilon)$ and thus
$$ I_n(y',\varepsilon)=H_n(I(y',\varepsilon))=\varepsilon.$$
Additionally,
$$ x'-g(x',\delta)\nu'(x',\gamma(x'))=H'(I(y',\varepsilon))=y'$$
and 
$$ I'(y',\delta)=
I'\big(x'-g(x',\delta)\nu'(x',\gamma(x')),\delta\big)=
I'(H(x',\delta))=x'.$$
Accordingly, recalling~\eqref{LOCALPAPRAGP},
we can locally parameterize~$\partial\Omega_\delta$
as
\begin{equation*}
 \begin{split}&
 \Big(
  y',\,
  \gamma(I'(y',\delta))-g(I'(y',\delta),\delta)\,\nu_n(I'(y',\delta),\gamma(I'(y',\delta)))
  \Big)\\=\,&
  \Big(
  y',\,
  \gamma(I'(y',\delta))-\eta(y',\delta)\Big),
\end{split}
\end{equation*}
with
$$\eta(y',\delta):=g(I'(y',\delta),\delta)\;\nu_n(I'(y',\delta),\gamma(I'(y',\delta))).$$
For this reason, to complete the proof of the convexity property of~$\partial\Omega_\delta$,
it suffices to check the convexity of the function
 \begin{equation}\label{09876ujhy098765p08098765098765987654} 
  y'\longmapsto \Gamma(y',\delta):=\gamma(I'(y',\delta))-\eta(y',\delta)
 \end{equation} that describes
its graph. To this end, we remark that, for each~$i$, $j\in\{1,\dots,n-1\}$,
\begin{eqnarray*}
\delta_{ij}=\frac{\partial }{\partial y_i'}y'_j=
\frac{\partial }{\partial y_i'}
H_j(I'(y',\varepsilon),\varepsilon)
=\sum_{\ell=1}^{n-1}
\frac{\partial H_j}{\partial x_\ell'}(I'(x',\varepsilon),\varepsilon)\,
\frac{\partial I_\ell}{\partial y_i'}(y',\varepsilon)
\end{eqnarray*}
that is
$$ {\rm Id}=
\frac{\partial H'}{\partial x'}(I'(y',\varepsilon),\varepsilon)\,
\frac{\partial I'}{\partial y'}(y',\varepsilon).$$
Setting~$y'_0:=H'(x'_0,0)$, we thus deduce from~\eqref{CORRI}
that
$$ \frac{\partial I'}{\partial y'}(y'_0,0)={\rm Id}.$$
Also, by~\eqref{PIATTO}, we have that the gradient of~$\gamma$ at~$x'_0$ vanishes.
With these items of information, we find that
\begin{eqnarray*}
\frac{\partial }{\partial y_i'}\gamma(I'(y',\delta))=\sum_{\ell=1}^{n-1}
\frac{\partial \gamma}{\partial x_\ell'}(I'(y',\delta))\,
\frac{\partial I_\ell}{\partial y_i'}(y',\delta)
\end{eqnarray*}
and
\begin{equation*}
\begin{split}\frac{\partial^2}{\partial y_i'\partial y_j'}
\gamma(I'(y',\delta))\Big|_{(y',\delta)=(y'_0,0)}\,&=\,
\sum_{\ell,m=1}^{n-1}
\frac{\partial^2 \gamma}{\partial x_\ell'\partial x'_m}(I'(y'_0,0))\,
\frac{\partial I_\ell}{\partial y_i'}(y'_0,0)
\frac{\partial I_m}{\partial y_j'}(y'_0,0)
\\&=\,\frac{\partial^2\gamma}{\partial y_i'\partial y_j'}(I'(y'_0,0)).
\end{split}
\end{equation*}
Besides, owing to~\eqref{LOCALPAPRAGP-BIS}, we have
$$ \eta(y',0)=g(I'(y',0),0)\;\nu_n(I'(y',0),\gamma(I'(y',0)))
=0$$
and therefore
$$\frac{\partial^2\eta}{\partial y_i'\partial y_j'}(y',0)=0.$$
As a result, recalling the definition of~$\Gamma$ in~\eqref{09876ujhy098765p08098765098765987654},
\begin{equation*}
\frac{\partial^2\Gamma}{\partial y_i'\partial y_j'}(y'_0,0)=\frac{\partial^2}{\partial y_i'\partial y_j'}
\gamma(I'(y',\delta))\Big|_{(y',\delta)=(y'_0,0)}=
\frac{\partial^2\gamma}{\partial y_i'\partial y_j'}(I'(y'_0,0)).
\end{equation*}
By the uniform convexity of the domain~$\Omega$
in~\eqref{KSMGAMMAdue}, we obtain that
$$ \frac{\partial^2\Gamma}{\partial y_i'\partial y_j'}(y'_0,0)\ge a_0\,{\rm Id}.$$
This, together with the fact that $\Gamma$ is a $C^2$ function in~$(y',\delta)$, gives
$$\frac{\partial^2\Gamma}{\partial y_i'\partial y_j'}\ge \frac{a_0}2\,{\rm Id}$$
in a neighborhood of~$(y'_0,0)$,
and proves that~$\Omega_\delta$ is uniformly convex, as desired.
  \end{proof}
  
  \section{A Faber-Krahn inequality and proofs of Theorems~\ref{thm.FK} and \ref{thm:mainFK}}\label{pr:sec}
  This section is devoted to show a \emph{quantitative}
  Faber-Krahn inequality for $\lambda_\LL(\Omega)$ and in particular 
  to prove Theorems~\ref{thm.FK} and~\ref{thm:mainFK}.
   In what follows, $\lambda_\LL(\Omega)$ denotes the principal Dirichlet eigenvalue
  of $\LL$ (in $\Omega$), as given by Definition~\ref{def.Eigen},
  and $u_0$ the corresponding principal eigenfunction
  (according to Theorem \ref{thm.existencelambdaLL}). 
  \medskip
  
  We begin by proving Theorem~\ref{thm.FK}.
  
  \begin{proof}[Proof of Theorem~\ref{thm.FK}]
   Let
   $\widehat{B}^{(m)}$ be the (unique) Euclidean ball with centre $0$ and volume $m$.
   If $u_0\in\mathbb{X}(\Omega)\setminus\{0\}$ is the principal
   eigenfunction of $\LL$ in $\Omega$, we define
   $$u_0^\ast:\R^n\to\R$$
   as the (decreasing) Schwarz symmetrization of $u_0$.
   Now, since $u_0\in\mathbb{X}(\Omega)$, it follows from
   a well-known theorem by Polya-Szeg\"o that
   \begin{equation} \label{eq.PSLoc}
    u_0^\ast\in \mathbb{X}(\widehat{B}^{(m)})\qquad\text{and}\qquad
   \int_{\widehat{B}^{(m)}}|\nabla u_0^\ast|^2\, dx
   \leq \int_{\Omega}|\nabla u|^2\, dx;
   \end{equation}
   furthermore, by \cite[Theorem A.1]{FS} we also have
   \begin{equation} \label{eq.PSNonloc}
    \iint_{\R^{2n}}\frac{|u_0^\ast(x)-u_0^\ast(y)|^2}{|x-y|^{n+2s}}\, d x\, dy
   \leq \iint_{\R^{2n}}\frac{|u_0(x)-u_0(y)|^2}{|x-y|^{n+2s}}\, d x\, dy.
   \end{equation}
   Gathering together all these facts and recalling~(1) in Theorem~\ref{thm.existencelambdaLL}, we then get
   \begin{equation} \label{eq.estimuusharp}
   \begin{split}
    \lambda_{\LL}(\Omega)
    & = \mathcal{D}_{\Omega,s}(u_0)
    = \int_{\Omega}|\nabla u_0|^2\, dx
    +\iint_{\R^{2n}}\frac{|u_0(x)-u_0(y)|^2}{|x-y|^{n+2s}}\, d x\, dy \\
    & \geq 
    \int_{\widehat{B}^{(m)}}|\nabla u_0^\ast|^2\, dx
    + \iint_{\R^{2n}}\frac{|u_0^\ast(x)-u_0^\ast(y)|^2}{|x-y|^{n+2s}}\, d x\, dy
    \\
    & = \DD_{\widehat{B}^{(m)},s}\big(u_0^\ast\big) 
    \geq \lambda_{\LL}(\widehat {B}^{(m)}).
   \end{split}
   \end{equation}
   From this, reminding that $\lambda_{\LL}(\cdot)$ is translation-invariant
   (see Remark \ref{rem.proplambdaLL}-(1)), we derive the validity
   of \eqref{eq.FK} for every Euclidean ball $B^{(m)}$ with volume $m$.
   \medskip
   
   To complete the proof of Theorem~\ref{thm.FK}, let us suppose that 
   $$\lambda_{\LL}(\Omega) = \lambda_{\LL}(B^{(m)})$$
   for some (and hence, for every) ball $B^{(m)}$ with $|B^{(m)}| = m$.
   By \eqref{eq.estimuusharp} we have
   \begin{align*}
    & \int_{\Omega}|\nabla u_0|^2\, dx
    +\iint_{\R^{2n}}\frac{|u_0(x)-u_0(y)|^2}{|x-y|^{n+2s}}\, d x\, dy
    = \lambda_\LL(\Omega) \\
    & \qquad = \lambda_\LL(\widehat{B}^{(m)}) =
    \int_{\widehat{B}^{(m)}}|\nabla (u_0)^\ast|^2\, dx
    + \iint_{\R^{2n}}\frac{|u_0^\ast(x)-u_0^\ast(y)|^2}{|x-y|^{n+2s}}\, d x\, dy.
    \end{align*}
    In particular, by \eqref{eq.PSLoc}-\eqref{eq.PSNonloc} we get
    $$
    \iint_{\R^{2n}}\frac{|u_0(x)-u_0(y)|^2}{|x-y|^{n+2s}}\, d x\, dy 
    = \iint_{\R^{2n}}\frac{|u_0^\ast(x)-u_0^\ast(y)|^2}{|x-y|^{n+2s}}\, d x\, dy.$$
    We are then entitled to apply once again \cite[Theorem A.1]{FS},
    which ensures that
    $u_0$ must be proportional to a translate of a symmetric decreasing function.
    As a consequence of this fact, we immediately deduce that
    $$\Omega = \{x\in\R^n:\,u_0(x) > 0\}$$
    must be a ball (up to a set of zero Lebesgue measure).
  \end{proof}
    
  Now that we have established Theorem \ref{thm.FK}, we can finally prove the main result
  of this paper, namely, the \emph{quantitative version} of inequality \eqref{eq.FK} presented in
  Theorem \ref{thm:mainFK}.
  
  The proof of Theorem \ref{thm:mainFK} is based on the following technical lemma.
  
  \begin{lemma} \label{lem.Melas} Let~$s\in(0,1/2)$.
  Let $\Omega \subset \mathbb{R}^n$ be bounded open
    set with boundary $\partial \Omega$ of class $C^3$.
   Let $m := |\Omega|$, and let
   $B^{(m)}$ be any Euclidean ball with volume $m$.
   Moreover, let~$u_0$ be the principal eigenfunction of~$\LL$ in~$\Omega$,
   let $\delta_0 > 0$ be as in Lemma \ref{lem:convexity}, and let
\begin{equation}\label{NOME}0 < \delta < \min\big\{\tfrac{1}{2}\,m^{-1/2},\delta_0\big\}.\end{equation}
   Let also~$\Omega_\delta:=\{x\in\Omega\,:\, u_0(x)>\delta\}$.
   
   Then, there exists a small $\widetilde\varepsilon > 0$, only depending
   on $n$ and $s$, with the following property: if $0<\varepsilon<\widetilde\varepsilon$
   is such that
   \begin{equation} \label{eq.assLambda}
    \lambda_\LL(\Omega)\leq (1+\varepsilon)\lambda_\LL(B^{(m)}),
   \end{equation}
   then we have the estimate
   \begin{equation*}
    |\Omega_\delta| \geq \left[1-\frac{2n}{s}\cdot\max\{\delta|\Omega|^{1/2},\varepsilon\}\right]\cdot
    |\Omega|.
   \end{equation*}
  \end{lemma}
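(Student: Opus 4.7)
The plan is a Melas-style test-function argument on the superlevel set~$\Omega_\delta$. Normalize the principal eigenfunction so that~$\|u_0\|_{L^2(\Omega)}=1$ (cf.~Theorem~\ref{thm.existencelambdaLL}) and consider the competitor~$v:=(u_0-\delta)_+$. Since~$u_0\le\delta$ on~$\R^n\setminus\Omega_\delta$, the function~$v$ is supported in~$\overline{\Omega_\delta}$ and belongs to~$\mathbb{X}(\Omega_\delta)$, hence is admissible in the Rayleigh quotient defining~$\lambda_\LL(\Omega_\delta)$. The local part of~$\mathcal{D}_{\Omega_\delta,s}(v)$ equals~$\int_{\Omega_\delta}|\nabla u_0|^2\le\int_\Omega|\nabla u_0|^2$ by the chain rule for the truncation, while the pointwise monotonicity~$|a_+-b_+|\le|a-b|$ applied with~$a=u_0(x)-\delta$, $b=u_0(y)-\delta$ yields~$[v]_s\le[u_0]_s$. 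Consequently,
\[
\mathcal{D}_{\Omega_\delta,s}(v)\le\mathcal{D}_{\Omega,s}(u_0)=\lambda_\LL(\Omega).
\]

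For the denominator, I plan to expand~$(u_0-\delta)^2$ on~$\Omega_\delta$, combine the Cauchy--Schwarz bound~$\int_{\Omega_\delta}u_0\le m^{1/2}$ with the pointwise bound~$\int_{\Omega\setminus\Omega_\delta}u_0^2\le\delta^2 m$ (since~$u_0\le\delta$ there), and absorb the quadratic correction~$\delta^2 m$ into~$\tfrac12\delta m^{1/2}$ via assumption~\eqref{NOME}, arriving at
\[
\|v\|_{L^2(\Omega_\delta)}^2\ge 1-\tfrac{5}{2}\,\delta\, m^{1/2},\qquad\text{so that}\qquad\lambda_\LL(\Omega_\delta)\le\frac{\lambda_\LL(\Omega)}{1-\tfrac{5}{2}\,\delta\, m^{1/2}}.
\]
Next I apply Faber--Krahn~\eqref{eq.FK} to~$\Omega_\delta$ itself: the restriction~$s<1/2$ and the~$C^3$ regularity of~$\partial\Omega$ (with the uniform convexity inherited from the ambient context of Theorem~\ref{thm:mainFK}) activate Lemma~\ref{lem:convexity}, which supplies the~$C^1$ regularity of~$\partial\Omega_\delta$ required by Theorem~\ref{thm.FK}. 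Setting~$m_\delta:=|\Omega_\delta|\le m$ and writing~$B^{(m_\delta)}=tB^{(m)}$ with~$t=(m_\delta/m)^{1/n}\le 1$, the mixed-order scaling in Remark~\ref{rem.proplambdaLL}(2)(i) then gives
\[
\lambda_\LL(\Omega_\delta)\ge\lambda_\LL(B^{(m_\delta)})\ge\left(\frac{m_\delta}{m}\right)^{-2s/n}\lambda_\LL(B^{(m)}).
\]

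Chaining the previous inequalities with the hypothesis~\eqref{eq.assLambda} produces
\[
\left(\frac{m_\delta}{m}\right)^{-2s/n}\le\frac{1+\varepsilon}{1-\tfrac{5}{2}\,\delta m^{1/2}}.
\]
Writing~$\eta:=\max\{\delta m^{1/2},\varepsilon\}$, a short computation shows that choosing~$\widetilde\varepsilon$ small enough (depending only on~$n,s$; e.g.\ the choice~$\widetilde\varepsilon:=s/(2n)$ is sufficient, since the statement reduces to the tautology~$|\Omega_\delta|\ge 0$ as soon as~$\eta\ge s/(2n)$) makes the right-hand side at most~$1+4\eta$. The elementary Bernoulli inequality~$(1+x)^{-\alpha}\ge 1-\alpha x$, valid for~$x\ge 0$ and~$\alpha>0$, applied with~$\alpha=n/(2s)$, then delivers
\[
\frac{m_\delta}{m}\ge(1+4\eta)^{-n/(2s)}\ge 1-\frac{2n}{s}\,\eta,
\]
which is the announced bound. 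The only genuinely nontrivial ingredient is securing the~$C^1$ regularity of~$\partial\Omega_\delta$ required to invoke Faber--Krahn, which is precisely the content of Lemma~\ref{lem:convexity} and the reason for the assumption~$s<1/2$; everything else amounts to a Polya--Szeg\"o-style variational comparison, the monotonicity of the Gagliardo seminorm under positive-part truncation, and careful bookkeeping so that the prefactor~$5/2$ from the~$L^2$ bound and the exponent~$n/(2s)$ from the scaling combine into the announced constant~$2n/s$.
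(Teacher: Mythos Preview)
Your proposal is correct and follows essentially the paper's route: the same test function $v=(u_0-\delta)_+$, the same appeal to Lemma~\ref{lem:convexity} to justify Faber--Krahn on $\Omega_\delta$, and the same scaling via Remark~\ref{rem.proplambdaLL}(2). The only real variation is that you bound the numerator crudely by $\mathcal D_{\Omega_\delta,s}(v)\le\mathcal D_{\Omega,s}(u_0)=\lambda_\LL(\Omega)$ using the contraction $|a_+-b_+|\le|a-b|$, whereas the paper tests the eigenvalue equation~\eqref{eq:uzeroPDE} with $v$ to obtain the slightly sharper $\mathcal D_{\Omega_\delta,s}(v)\le\lambda_\LL(\Omega)\|v\|_{L^2(\Omega_\delta)}$, hence $\lambda_\LL(\Omega_\delta)\le\lambda_\LL(\Omega)/\|v\|_{L^2}$ rather than your $\lambda_\LL(\Omega)/\|v\|_{L^2}^2$; this is why you end up with the prefactor $\tfrac52$ in place of the paper's $2$. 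One arithmetic caution: your intermediate claim $(1+\varepsilon)/(1-\tfrac52\delta m^{1/2})\le 1+4\eta$ actually needs $\eta\le 1/20$, not merely $\eta<s/(2n)$; the painless repair is to write $(1-\tfrac52\eta)/(1+\eta)\ge 1-\tfrac72\eta$ and apply Bernoulli directly, which gives $m_\delta/m\ge 1-\tfrac{7n}{4s}\eta\ge 1-\tfrac{2n}{s}\eta$.
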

  
  \begin{proof}
  First of all we observe that, since $\de\Omega$ is of class
  $C^3$, from Theorem~\ref{thm:globalCalfauzero} we de\-ri\-ve that the principal
  eigenfunction $u_0$ of $\LL$ in $\Omega$ satisfies
  $$u_0\in C_b(\R^n)\cap C^{2}(\overline{\Omega})$$
  (actually, $u_0\in C^{2,\alpha}(\overline{\Omega})$ if $\alpha+2s < 1$);
  moreover, since $\delta < \delta_0$, by Lemma \ref{lem:convexity} we know that
  $\de\Omega_\delta$ is of class $C^1$.
  We then consider the function 
  $$v:= (u_0-\delta)_+.$$
  Since $u_0\in \mathbb{X}(\Omega)$ and $u_0\leq \delta$ on $\R^n\setminus\Omega_\delta$,
  it is readily seen that $v\in\mathbb{X}(\Omega_\delta)\subseteq\mathbb{X}(\Omega)$;
  as a consequence, since
  $u_0$ is a weak solution of \eqref{eq.PDEweak}, we have
   \begin{align*}
    \mathcal{D}_{\Omega_\delta,s}(v)
    & = \int_{\Omega_\delta}|\nabla v|^2\, dx
    + \iint_{\R^{2n}}\frac{|v(x)-v(y)|^2}{|x-y|^{n+2s}}\, d x\, dy \\
    & =
    \int_{\Omega}\langle \nabla u_0,\nabla v\rangle\, dx
    + \iint_{\R^{2n}}\frac{(u_0(x)-u_0(y))(v(x)-v(y))}{|x-y|^{n+2s}}\, d x\, dy
    \\
    & = \lambda_{\LL}(\Omega)\int_{\Omega}
    u_0\,v\, dx
    \\&= \lambda_{\LL}(\Omega)\int_{\Omega_\delta}
    (u_0-\delta)\,u_0\, dx \\
    & \leq 
    \lambda_{\LL}(\Omega)\bigg(\int_{\Omega_\delta}
    (u_0-\delta)^2\, dx\bigg)^{1/2}\\&= \lambda_{\LL}(\Omega)\,\|v\|_{L^2(\Omega_\delta)}
   \end{align*}
   where we have used
H\"older's inequality and the fact that~$\|u_0\|_{L^2(\Omega)} = 1$.

   On the other hand, since $u_0\leq \delta$ in $\Omega\setminus\Omega_\delta$
   and $\delta|\Omega|^{1/2}<1/2$ (by the choice of
   $\delta$ and the definition of $\Omega_\delta$), an application
   of Minkowski's inequality gives
   \begin{align*}
    \|v\|_{L^2(\Omega_\delta)}
    & = \bigg(\int_{\Omega_\delta}(u_0-\delta)^2\, dx\bigg)^{1/2}
    \geq
    \bigg(\int_{\Omega_\delta}u_0^2\, dx\bigg)^{1/2}- 
    \bigg(\int_{\Omega_\delta}\delta^2\, dx\bigg)^{1/2} \\
    & \geq \bigg(1-\int_{\Omega\setminus\Omega_\delta}\delta^2\bigg)^{1/2}
    - \delta|\Omega|^{1/2}
    \geq 1-2\delta|\Omega|^{1/2}.
   \end{align*}
   Gathering together all these estimates, 
   and reminding \eqref{eq.assLambda}, we obtain
   \begin{equation} \label{eq.estimlambdaOmegadelta}
   \begin{split}&
    \lambda_{\LL}(\Omega_\delta)\leq \frac{ \mathcal{D}_{\Omega_\delta,s}(v)}{ \|v\|_{L^2(\Omega_\delta)}^2}
     \leq\frac{\lambda_{\LL}(\Omega)}{
    \|v\|_{L^2(\Omega_\delta)}}
    \leq \lambda_{\LL}(\Omega)\big(1-2\delta|\Omega|^{1/2}\big)^{-1} \\
    &\qquad \leq (1+\varepsilon)\lambda_\LL(B^{(m)})\big(1-2\delta|\Omega|^{1/2}\big)^{-1}.
    \end{split}
   \end{equation}
   U\-sing Remark \ref{rem.proplambdaLL}-(2) and the Faber-Krahn inequality
  in Theorem \ref{thm.FK} (notice that we are in the position of applying Theorem \ref{thm.FK} for the set~$t\Omega_\delta$
 in light of the regularity result in Lemma~\ref{lem:convexity}, and notice also that $|t\Omega_\delta| = |\Omega| = m$), we get
  \begin{equation} \label{eq.applicationFKRemark}
   t^{-2s}
   \lambda_\LL(\Omega_\delta)
   \geq \lambda_\LL(t\Omega_\delta) \geq \lambda_\LL(B^{(m)}),
   \qquad\text{where
   $t:= \frac{|\Omega|^{1/n}}{|\Omega_\delta|^{1/n}} > 1$}.
  \end{equation}
  By combining 
  \eqref{eq.estimlambdaOmegadelta} with 
  \eqref{eq.applicationFKRemark}, we then get
  \begin{equation}\label{doeti9b645}
   \frac{|\Omega_\delta|}{|\Omega|}
 \geq \bigg(\frac{\lambda_\LL(B^{(m)})}{\lambda_\LL(\Omega_\delta)}\bigg)^{n/(2s)}
   \geq 
   \bigg[\frac{1-2\delta|\Omega|^{1/2}}{1+\varepsilon}\bigg]^{n/(2s)}.
  \end{equation}
  Finally, if $\widetilde\varepsilon > 0$ is sufficiently small and $0<\varepsilon<\widetilde\varepsilon$,
  we obtain
  $$ \bigg[\frac{1-2\delta|\Omega|^{1/2}}{1+\varepsilon}\bigg]^{n/(2s)} \geq 1-\frac{2n}{s}\cdot\max\{\delta|\Omega|^{1/2},\varepsilon\}.$$
  {F}rom this and~\eqref{doeti9b645}, we obtain the 
desired result.
  \end{proof}

  Now we provide a convexity result that turns out to be useful for the proof
   of Theorem \ref{thm:mainFK}:
\begin{lemma}\label{ADDI}
Let~$\Omega\subset\R^n$ be open, bounded
and convex.
Then, there exists $\widehat\varepsilon > 0$ 
 with the following property: if there exists a ball $B\subseteq \Omega$ such that
\begin{equation}\label{BDE} 
 |B|\ge (1-\varepsilon)|\Omega|\qquad\text{for some $0<\varepsilon<\widehat\varepsilon$},
\end{equation}
then there exists a ball~$B_*$, which is concentric to~$B$,
such that~$\Omega\subseteq B_*$ and
\begin{equation}\label{BDE22}  |\Omega|\ge \big(1-C\varepsilon^{\frac{2}{n+1}}\big)|B_*|.\end{equation}
Here, the positive constant~$C$ depends only on~$n$.
\end{lemma}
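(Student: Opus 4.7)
The goal is to upgrade~\eqref{BDE} to a two-sided volume comparison, for which it suffices to bound the radius $R$ of the smallest ball $B_\ast$ concentric with $B$ that contains $\Omega$. Place $B=B_r(0)$ at the origin, so that $B_\ast=B_R(0)$ with $R:=\sup_{x\in\overline\Omega}|x|$; the supremum is attained at some $p\in\overline\Omega$ by compactness. Setting $\tau:=(R-r)/r\geq 0$, the conclusion~\eqref{BDE22} reduces to showing $\tau\leq C\varepsilon^{2/(n+1)}$: indeed $|B_\ast|/|B|=(1+\tau)^n\leq 1+C'\tau$ for bounded $\tau$, and combined with $|B|\leq|\Omega|$ this yields $|\Omega|\geq(1-C''\varepsilon^{2/(n+1)})|B_\ast|$.

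By convexity, $\Omega$ contains the convex hull $K:=\mathrm{conv}(B\cup\{p\})$. After rotating so that $p=R\,e_1$, the set $K$ admits a simple description: the tangent cone from $p$ to $\partial B$ touches the sphere along $\{x_1=r^2/R,\ |x'|=r\sqrt{1-r^2/R^2}\}$, hence the cross-section of $K$ at height $x_1$ coincides with that of $B$ for $x_1\leq r^2/R$ and is a disk of radius $\rho(x_1):=r\sqrt{1-r^2/R^2}\cdot(R-x_1)/(R-r^2/R)$ for $x_1\in[r^2/R,R]$. The excess $V_{\mathrm{ext}}:=|K|-|B|$ therefore equals the tangent-cone volume minus the spherical cap $\{x\in B:x_1\geq r^2/R\}$ that it replaces.

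For small $\tau$, the relations $1-r^2/R^2\sim 2\tau$, $R-r^2/R\sim 2r\tau$ and $r-r^2/R\sim r\tau$ lead, by direct integration, to
$$\text{(cone volume)}\ \sim\ \frac{\omega_{n-1}\,2^{(n+1)/2}}{n}\,r^n\tau^{(n+1)/2},\qquad \text{(cap volume)}\ \sim\ \frac{\omega_{n-1}\,2^{(n+1)/2}}{n+1}\,r^n\tau^{(n+1)/2}.$$
Since $1/n>1/(n+1)$, subtracting gives $V_{\mathrm{ext}}\geq c_n\,r^n\tau^{(n+1)/2}$ for all $\tau$ below some absolute threshold $\tau_0>0$, with an explicit $c_n>0$. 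Together with $V_{\mathrm{ext}}\leq|\Omega|-|B|\leq\varepsilon|\Omega|\leq\varepsilon|B|/(1-\varepsilon)=\varepsilon\,\omega_n\,r^n/(1-\varepsilon)$, this forces $\tau\leq C\varepsilon^{2/(n+1)}$ once $\widehat\varepsilon$ is chosen small enough that $\tau<\tau_0$ and the linearization $(1+\tau)^n\leq 1+C'\tau$ applies, which yields~\eqref{BDE22}.

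The only delicate step is the asymptotic comparison above: the exponent $(n+1)/2$ in $\tau$ is sharp because the height and transverse radius of the cap near the contact circle scale like $\tau$ and $\sqrt\tau$ respectively, and the positive gap $1/n-1/(n+1)$ between the cone and cap leading coefficients is precisely what yields a lower bound on $V_{\mathrm{ext}}$. Everything else is bookkeeping, namely choosing $\widehat\varepsilon$ small enough to remain within the asymptotic regime.
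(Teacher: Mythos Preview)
Your argument is correct and follows the same geometric strategy as the paper: pick the farthest point~$p$ from the center, use convexity to trap a cone-like region in~$\Omega\setminus B$, and extract the exponent~$(n+1)/2$ from the scaling of this region. The difference is in how the excess volume is isolated. The paper slices the convex hull by the hyperplane tangent to~$B$ at its pole, obtaining a right circular cone of height~$\delta=R-r$ and base radius~$\sim r\sqrt{\delta/r}$ that lies \emph{entirely} in~$\Omega\setminus B$; this gives~$|\Omega\setminus B|\ge c\,\delta^{(n+1)/2}r^{(n-1)/2}$ directly, with no cap to subtract and no asymptotics needed. Your route via the full tangent cone minus the spherical cap reaches the same bound but requires matching the two leading coefficients and checking that the cone's~$1/n$ beats the cap's~$1/(n+1)$. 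The paper also treats the large-$\delta$ regime explicitly (their inequality~$\delta\le\widetilde C R$), whereas you absorb it into ``choosing~$\widehat\varepsilon$ small enough that~$\tau<\tau_0$''; this is fine, but note that the implicit step---that~$V_{\mathrm{ext}}/r^n$ is bounded below by a positive dimensional constant on~$[\tau_0,\infty)$---deserves one line, since it is what lets you force~$\tau<\tau_0$ from~$\varepsilon<\widehat\varepsilon$ rather than the other way around.
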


\begin{proof} 
 Up to a translation, we can assume that~$B$ is centered at the origin. We assume that~$B$
 has radius~$R$ and we take~$P\in\overline\Omega$
 maximizing the distance to the origin among points in~$\Omega$. Let also~$\delta:=|P|-R$.
 By construction, we have that~$\delta\ge0$ (since~$\Omega$ contains~$B$)
 and that~$\Omega$ is contained in the ball~$B_*$ of radius~$R+\delta$. 
 Since~$|P|=R+\delta$, up to a rotation we can suppose that~$P=(0,\dots,0,R+\delta)$.
 We also consider the convex hull~$H$
 of~$P$ and~$B$. By construction, $H$ lies in the closure of~$\Omega$.
 Let~$K$ be the right circular cone obtained by intersecting~$H$ and the halfspace~$\{x_n\ge R\}$.
 Notice that the height of the cone~$K$ is equal to~$\delta$ and we denote by~$r$ the radius
 of its basis. By triangular similitude (see the triangles~$\stackrel{\triangle}{PTO}$
and~$\stackrel{\triangle}{PRQ}$ in Figure~\ref{HULL} on page~\pageref{HULL}), we see that
 $$ \frac{r}{\delta}=\frac{R}{\sqrt{(R+\delta)^2-R^2}}.$$
 As a consequence,
 \begin{equation}\label{DSFBD} 
  \begin{split}&|\Omega\setminus B|\ge |K|\ge cr^{n-1}\delta=
 c\delta\left(\frac{\delta R}{\sqrt{(R+\delta)^2-R^2}}\right)^{n-1}\\&\qquad=
 c\delta
 \left( \frac{\delta R}{ \sqrt{2\delta R+\delta^2}}\right)^{n-1}=
 c\delta^{\frac{n+1}2}\left( \frac{R}{ \sqrt{2R+\delta}}
 \right)^{n-1},\end{split}
 \end{equation}
for some~$c>0$ depending only on~$n$.

On the other hand, in view of~\eqref{BDE},
\begin{equation*} |\Omega\setminus B|=|\Omega|-|B|\le\varepsilon|\Omega|.\end{equation*}
Combining this and~\eqref{DSFBD}, we have that
\begin{equation}\label{KJS:0olktyhgro} c\delta^{\frac{n+1}2}\left( \frac{R}{ \sqrt{2R+\delta}}
\right)^{n-1}\le\varepsilon|\Omega|.\end{equation}
Also, by~\eqref{BDE} we know that, for~$\varepsilon$ sufficiently small,
$$|\Omega|\le2(1-\varepsilon)|\Omega|\le2|B|=
CR^n$$
for some~$C>0$ depending only on~$n$.
{F}rom this and~\eqref{KJS:0olktyhgro}, up to renaming~$c$ we conclude that
\begin{equation}\label{LABB} \frac{c\delta^{\frac{n+1}2}}{ (2R+\delta)^{\frac{n-1}2}}
\le\varepsilon R.\end{equation}

Now we claim that
\begin{equation}\label{LABB2}
\delta\le \widetilde{C} R,
\end{equation}
where~$\widetilde{C}:=2+\frac{ 2^{\frac{n+1}2}}{c}$. Indeed, 
suppose by contradiction that~$\delta> \widetilde{C} R$.
Then, by~\eqref{LABB},
\begin{eqnarray*}&&
1\ge\varepsilon\ge \frac{c\delta^{\frac{n+1}2}}{ R\,(2R+\delta)^{\frac{n-1}2}}=
\frac{c\delta}{ R\,\left(\frac{2R}\delta+1\right)^{\frac{n-1}2}}\ge
\frac{c\delta}{ R\,\left(\frac{2}{\widetilde{C}}+1\right)^{\frac{n-1}2}}\\&&\qquad\qquad\ge
\frac{c\delta}{ R\,\left(1+1\right)^{\frac{n-1}2}}\ge
\frac{c\,\widetilde C}{ 2^{\frac{n-1}2}}\ge2.
\end{eqnarray*}
This is a contradiction and thus~\eqref{LABB2} is established.

Now, combining~\eqref{LABB} and~\eqref{LABB2} we find that
$$ \frac{\widetilde{c}\,\delta^{\frac{n+1}2}}{ R^{\frac{n-1}2}}
\le\varepsilon R,$$
with~$\widetilde{c}:=\frac{c}{(2+\widetilde{C})^{\frac{n-1}2}}$, and therefore
\begin{equation}\label{UJSNYnboB}
\delta\le C_\star\,\varepsilon^{\frac2{n+1}}\,R
\end{equation}
with~$C_\star>0$ depending only on~$n$.

We also remark that
$$ |B_*|= |B|\frac{(R+\delta)^n}{R^n}\le
|\Omega|\frac{(R+\delta)^n}{R^n}.$$
This and~\eqref{UJSNYnboB} entail that
\begin{equation}\label{UJSNYnboB-2} |B_*|\le\big(1+C_\star\,\varepsilon^{\frac2{n+1}}\big)
|\Omega|.
\end{equation}
It is also useful to point out that, for every~$t\ge0$,
$$ (1+t)(1-t)=1-t^2\le1,$$
thus we deduce from~\eqref{UJSNYnboB-2} that
\begin{equation*}|B_*|\le\frac{\big(1+C_\star\,\varepsilon^{\frac2{n+1}}\big)\big(1-C_\star\,\varepsilon^{\frac2{n+1}}\big)
|\Omega|}{\big(1-C_\star\,\varepsilon^{\frac2{n+1}}\big)}\le
\frac{
|\Omega|}{\big(1-C_\star\,\varepsilon^{\frac2{n+1}}\big)},
\end{equation*}
as desired.
\end{proof}

In spite of some comments appeared in the literature, we believe that the exponent in formula~\eqref{BDE22} of
Lemma~\ref{ADDI}
is optimal, as remarked in Appendix~\ref{AER}. 

\begin{proof}[Proof of Theorem \ref{thm:mainFK}]
  Along the proof, constants depending only on $n$, $s$ and $\Omega$ may change 
  passing from a line to another. Nevertheless, to avoid a cumbersome notation, we will keep the same 
  symbol $C$ for all of them.
  
  Let $u^{\ast}_0$ be the decreasing Schwarz symmetrization of the first eigenfunction $u_0$ (given by Theorem~\ref{thm.existencelambdaLL}). We recall 
  from Theorem \ref{thm.existencelambdaLL} that we can assume $\|u_0\|_{L^{2}}=1$, 
  and hence  $\|u^{\ast}_0\|_{L^{2}}=1$ as well. We define the sets 
  $$\Gamma(t):= \left\{ x \in \Omega: u_0 (x) =t \right\},$$
  and
  $$\Gamma^{\ast}(t):= \left\{ x \in \Omega: u^{\ast}_0 (x) =t \right\}.$$
  We further define $T:= \sup_{\Omega}u_0 \in (0,+\infty)$ and the function
  $$\psi(t) := \int_{\Gamma(t)}\dfrac{1}{|\nabla u_0|}d\mathcal{H}^{n-1}, \quad {\mbox{ for all }}t \in (0,T).$$
  We recall that
  \begin{equation}\label{eq:GammaPsi}
  \mathcal{H}^{n-1}(\Gamma(t)) \leq \psi(t) \int_{\Gamma(t)}|\nabla u_0| \, d\mathcal{H}^{n-1},
  \end{equation}
  and, thanks to the classical isoperimetric inequality, 
  \begin{equation}\label{eq:GammaAstGamma}
  \mathcal{H}^{n-1}(\Gamma^{\ast}(t)) \leq \mathcal{H}^{n-1}(\Gamma(t)).
  \end{equation}
  We also recall that $\Gamma(t)$ is the boundary of the set 
  $\Omega_t := \{x\in \Omega: u_0(x) >t\}$ and $\Gamma^{\ast}(t)$ is the boundary 
  of a ball with volume equal to $|\Omega_t|$. Therefore, 
  \begin{equation}\label{eq:Hn-1Gamma}
  \mathcal{H}^{n-1}(\Gamma^{\ast}(t)) = n \,|B_1|^{1/n} \,|\Omega_t|^{1-1/n}, 
  \quad \textrm{for every } t \in [0,T),
  \end{equation}
   where, as usual, $|B_1|$ denotes the $n$-dimensional Lebesgue measure of the unit ball. \medskip
  
Now we take~$\widetilde\varepsilon$ to be as in Lemma~\ref{lem.Melas} and~$\widehat\varepsilon$
as in Lemma~\ref{ADDI}.
  We also define~$\overline\varepsilon:=\min\big\{\tfrac{1}{4|\Omega|},\delta_0^2\big\}$,
  where~$\delta_0$ is as in Lemma~\ref{lem:convexity}. We stress that~$\widetilde\varepsilon$,
  $\widehat\varepsilon$ and~$\overline\varepsilon$
  are small quantities depending only on the structural parameters of the problem and we suppose that
  the parameter~$\varepsilon_0$ in the statement of Theorem~\ref{thm:mainFK}
  satisfies
  \begin{equation}\label{NOME2}
  \varepsilon_0<\min\left\{\widetilde\varepsilon,{\widehat{\varepsilon}}^{2n}, \overline\varepsilon,|\Omega|,\frac{s^2}{16 n^2|\Omega|}\right\}.
  \end{equation}
  \medskip
  
  \textsc{Step I.} 
  We first prove that, if~$\varepsilon\in(0,\varepsilon_0)$ and \eqref{eq:AssumptionFK} is satisfied, then
  \begin{equation}\label{eq:Step1}
  \int_{0}^{T}\left[ \mathcal{H}^{n-1}(\Gamma(t))^2 - \mathcal{H}^{n-1}(\Gamma^{\ast}(t))^2 \right] \dfrac{1}{\psi(t)}\, d t \leq \lambda_{\LL}(B) \varepsilon.
  \end{equation}
To this aim, we use \eqref{eq:GammaPsi} and~\eqref{eq:GammaAstGamma} to observe that
\begin{eqnarray*}&&
 \int_{\Omega}|\nabla u_0|^2 \, d x =
\int_{0}^{T}\int_{\Gamma(t)}|\nabla u_0| \, d\mathcal{H}^{n-1}\, d t =
\int_{0}^{T}\mathcal{H}^{n-1}(\Gamma(t))^2 \dfrac{d t}{\psi(t)} \\&&\qquad\qquad\ge
\int_{0}^{T}\mathcal{H}^{n-1}(\Gamma^{\ast}(t))^2 \dfrac{d t}{\psi(t)} = 
\int_{B}|\nabla u_0^{\ast}|^2 \,d x.
\end{eqnarray*}
Consequently, using~\eqref{eq:AssumptionFK} and~\eqref{eq.PSNonloc}, we get that
\begin{align*}
 \int_{0}^{T}\bigg[ \mathcal{H}^{n-1}&(\Gamma(t))^2 - \mathcal{H}^{n-1}(\Gamma^{\ast}(t))^2 \bigg]
  \dfrac{1}{\psi(t)}\,d t 
 = \int_{\Omega}|\nabla u_0|^2 \, d x - \int_{B}|\nabla u_0^{\ast}|^2 \, d x
\\[0.1cm]
&  = \lambda_{\LL}(\Omega) - \lambda_{\LL}(B) + 
\iint_{\mathbb{R}^{2n}}\dfrac{|u_0^{\ast}(x)-u_0^{\ast}(y)|^2}{|x-y|^{n+2s}}\, d x \, d y  
 \\[0.1cm]
 & \qquad\qquad - \iint_{\mathbb{R}^{2n}}\dfrac{|u_0(x)-u_0(y)|^2}{|x-y|^{n+2s}}\, d x \, d y \\
& \leq \lambda_{\LL}(\Omega)-\lambda_{\LL}(B) 
\leq \lambda_{\LL}(B) \varepsilon,
\end{align*}
  which is precisely \eqref{eq:Step1}.
  \medskip

  \textsc{Step II.} We prove that
  if~$\varepsilon\in(0,\varepsilon_0)$ and \eqref{eq:AssumptionFK} is satisfied, then
there exists a structural constant $C >0$ and~$\delta>0$ sufficiently small such that
  \begin{equation}\label{eq:Step2}
  \inf_{0\leq t \leq \delta} \left[ \mathcal{H}^{n-1}(\Gamma(t))^2 - \mathcal{H}^{n-1}(\Gamma^{\ast}(t))^2 \right] \leq C \varepsilon^{1/2}.
  \end{equation}
  For this we take \begin{equation}\label{soey5yyvtb}\delta := \varepsilon^{1/2}\end{equation} and we point out that, as a consequence of~\eqref{NOME2}, the condition in~\eqref{NOME} is satisfied. This allows us to use
  Lemma~\ref{lem.Melas}, yielding that
  \begin{equation*}
  |\Omega \setminus \Omega_{\delta}| \le
  \frac{2n\,|\Omega|}{s}\,\max\big\{\varepsilon^{1/2}|\Omega|^{1/2},\varepsilon\big\}=
  \frac{2 n\,|\Omega|^{3/2}\,\varepsilon^{1/2}}{s}
.\end{equation*}   
Therefore, because of the above choice for $\delta$ and exploiting the Cauchy-Schwarz inequality, we also find that
\begin{align*}
\varepsilon = \delta^2 & = \left( \int_{0}^{\delta}d t\right)^2 
\leq \left(\int_{0}^{\delta}\tfrac{d t}{\psi(t)}\right)\,\left( \int_{0}^{\delta}\psi(t)\, d t\right) 
\\[0.15cm]
& = \left(\int_{0}^{\delta}\tfrac{d t}{\psi(t)}\right) \, |\Omega \setminus \Omega_{\delta}|  
 \leq \frac{2 n\,|\Omega|^{3/2}\,\varepsilon^{1/2}}{s}\left(\int_{0}^{\delta}\tfrac{d t}{\psi(t)}\right),
\end{align*}
and thus there exists some $C > 0$ such that
  \begin{equation}\label{eq:StimaPsi}
  \int_{0}^{\delta}\frac{d t}{\psi(t)} \geq C  \varepsilon^{1/2},
  \end{equation}
  Hence, using~\eqref{eq:Step1} and~\eqref{eq:StimaPsi}, 
  and recalling~\eqref{eq:GammaAstGamma}, we deduce that
\begin{equation*}
\begin{aligned}
\inf_{0\leq t \leq \delta}& \left[ \mathcal{H}^{n-1}(\Gamma(t))^2 - 
\mathcal{H}^{n-1}(\Gamma^{\ast}(t))^2 \right]\\ 
&{\leq}\,\inf_{0\leq t \leq \delta}\left[ \mathcal{H}^{n-1}(\Gamma(t))^2 - 
 \mathcal{H}^{n-1}(\Gamma^{\ast}(t))^2 \right] \, C \varepsilon^{-1/2} 
 \int_{0}^{\delta}\dfrac{d t}{\psi(t)} \\
&\leq   \,
C \varepsilon^{-1/2} \int_{0}^{\delta}\left[ \mathcal{H}^{n-1}(\Gamma(t))^2 - \mathcal{H}^{n-1}
(\Gamma^{\ast}(t))^2 \right]\dfrac{1}{\psi(t)}\, d t\\
&{\leq}\, C \varepsilon^{1/2} \lambda_{\LL}(B) ,
\end{aligned}
\end{equation*}  
which gives \eqref{eq:Step2}.  

  We notice that, by the very definition of infimum, and recalling the 
  choice of~$\delta$ in~\eqref{soey5yyvtb}, there exists 
  $\tau \in [0,\delta]=[0,\varepsilon^{1/2}]$ such that   
  \begin{equation}\label{eq:DefInf}
   \mathcal{H}^{n-1}(\Gamma(\tau))^2 - \mathcal{H}^{n-1}(\Gamma^{\ast}(\tau))^2 \leq 2C\varepsilon^{1/2},  
   \end{equation}
   for~$\varepsilon\in(0,\varepsilon_0)$ such that \eqref{eq:AssumptionFK} is satisfied.
   \medskip
  
   \textsc{Step III.}  We let~$\varepsilon\in(0,\varepsilon_0)$ such 
   that \eqref{eq:AssumptionFK} is satisfied, and
   we take~$\tau \in [0,\delta]$ such that \eqref{eq:DefInf} holds true. We prove that
   there exists a structural constant $C >0$ such that
  \begin{equation}\label{eq:Step3}
  \mathcal{H}^{n-1}(\partial \Omega_{\tau}) = \mathcal{H}^{n-1}(\Gamma(\tau)) 
  \leq n |B_1|^{1/n} |\Omega_{\tau}|^{1-1/n} + C  \varepsilon^{1/2}.
\end{equation}     
For this, 
recalling that \eqref{eq:GammaAstGamma} and \eqref{eq:Hn-1Gamma} hold for every $t \in [0,T)$,
and using~\eqref{eq:DefInf}, we have that
\begin{align*}
 & \mathcal{H}^{n-1}(\Gamma(\tau)) - \mathcal{H}^{n-1}(\Gamma^{\ast}(\tau)) =\,
 \dfrac{\mathcal{H}^{n-1}(\Gamma(\tau))^2 - \mathcal{H}^{n-1}(\Gamma^{\ast}(\tau))^2 }{\mathcal{H}^{n-1}
  (\Gamma(\tau)) + \mathcal{H}^{n-1}(\Gamma^{\ast}(\tau))} \\ 
&  \qquad\quad \leq \,\dfrac{2\, C  \varepsilon^{1/2}}{\mathcal{H}^{n-1}(\Gamma(\tau)) + \mathcal{H}^{n-1} 
 (\Gamma^{\ast}(\tau))} \leq\, 
 \dfrac{2\,\varepsilon^{1/2}}{\mathcal{H}^{n-1}(\Gamma^{\ast}(\tau))} \\
 & \qquad\quad
{=}\, \dfrac{2\,
 \varepsilon^{1/2}}{n |B_1|^{1/n}|\Omega_{\tau}|^{1-1/n}}.
\end{align*}
 {F}rom this, eventually modifying the constant $C>0$, we further obtain that
 \begin{equation}\label{dj3857485709999}
 \mathcal{H}^{n-1}(\Gamma(\tau)) \leq n\, 
 |B_1|^{1/n}\,|\Omega_{\tau}|^{1-1/n} + \dfrac{C \, \varepsilon^{1/2}}{|\Omega_{\tau}|^{1-1/n}}.
 \end{equation}
 Furthermore, since~$\tau\le\delta=\varepsilon^{1/2}$, recalling~\eqref{NOME2}, we have that the
 condition in~\eqref{NOME} is satisfied, and therefore
  we can exploit Lemma \ref{lem.Melas}. In this way, we obtain that
  $$|\Omega_{\tau}|^{1-1/n} \geq
  \left[1-\frac{2n}{s}\cdot \varepsilon^{1/2}|\Omega|^{1/2}\right]^{1-1/n}\,    |\Omega|^{1-1/n}\ge 
   \left(\dfrac{|\Omega|}{2}\right)^{1-1/n},$$
  thanks to~\eqref{NOME2}. Plugging this information 
  into~\eqref{dj3857485709999}, we obtain~\eqref{eq:Step3}.
 \medskip 
  
  \textsc{Step IV.} We are now ready to finish the proof of \eqref{eq:Goal1}. 
   Once this is established, the existence of a ball $B_2$ for which \eqref{eq:Goal2} 
   holds follows from the convexity of $\Omega$
  and Lemma~\ref{ADDI} (notice that we are in the position of 
  exploiting Lemma~\ref{ADDI} thanks to~\eqref{NOME2}). \medskip
  
  Hence we focus on the proof of~\eqref{eq:Goal1}.
  We let $\rho>0$ be the inradius of $\Omega_\tau$ and let us 
  consider the ball $B^{(1)}$ whose radius is $\rho$
  and such that $B^{(1)} \subseteq \Omega_\tau$.
  We recall that the convexity of the set~$\Omega_\tau$ ensures that
  the following Bonnesen-type inequality holds,
  see e.g.~\cite{DING, HADW, OSSE}:
  \begin{equation}\label{eq:Bonnesen}
  \left( \dfrac{\mathcal{H}^{n-1}(\partial \Omega_\tau)}{\mathcal{H}^{
  n-1}(\partial B^{(1)})}\right)^{n/(n-1)} - \dfrac{|\Omega_{\tau}|}{|B^{(1)}|} 
  \geq \left[ \left(\dfrac{\mathcal{H}^{n-1}(\partial \Omega_\tau)}{
  \mathcal{H}^{n-1}(\partial B^{(1)})}\right)^{1/(n-1)} -1 \right]^{n}.
  \end{equation}
  Also, from~\eqref{eq:Step3},
  we have that
  \begin{equation*}
 \mathcal{H}^{n-1}(\partial \Omega_\tau) \leq n\,|B_1|^{1/n}\, |\Omega_\tau|^{
 1-1/n}+C\,
  \varepsilon^{1/2}.
    \end{equation*}
  Therefore,
  \begin{equation}\label{eq:Step3,5}
  \begin{split}
  & \left( \dfrac{\mathcal{H}^{n-1}(\partial \Omega_\tau)}
  {\mathcal{H}^{n-1}(\partial B^{(1)})}\right)^{n/(n-1)} - \dfrac{|\Omega_{\tau}|}{|B^{(1)}|} 
  \le \\
  & \qquad\qquad\left( \dfrac{n \, |B_1|^{1/n}\, |\Omega_\tau|^{
 1-1/n}+C\,
  \varepsilon^{1/2}}{\mathcal{H}^{n-1}(\partial B^{(1)})} \right)^{n/(n-1)}- 
  \dfrac{|\Omega_{\tau}|}{|B^{(1)}|} .
  \end{split}
  \end{equation}

Furthermore, using the isoperimetric inequality we see that
\begin{equation*}
\frac{|\Omega_\tau|^{1-1/n}}{{\mathcal{H}}^{n-1}(\partial\Omega_\tau)}\le  
   \frac{|B_1|^{1-1/n}}{{\mathcal{H}}^{n-1}(\partial B_1)}=
   \frac{|B_1|^{1-1/n}}{n\,|B_1|}=\frac{1}{n\,|B_1|^{1/n}}.
\end{equation*}
As a result, we obtain that
  \begin{equation}\label{eq:Step3,75}\begin{split}&
 \left[ \left(\dfrac{\mathcal{H}^{n-1}(\partial \Omega_\tau)}
 {\mathcal{H}^{n-1}(\partial B^{(1)})}\right)^{1/(n-1)} -1 \right]^{n} \ge  
 \left[ \left(\dfrac{| \Omega_
 \tau|^{1-1/n}}{| B^{(1)}|^{1-1/n}}\right)^{1/(n-1)} -1 \right]^{n}\\
 &\qquad\qquad = \left[ \left(\dfrac{| \Omega_\tau|}{| B^{(1)}|}\right)^{1/n} -1 \right]^{n}=
 \dfrac{\left[| \Omega_\tau|^{1/n}-|B^{(1)}|^{1/n}\right]^n }{ | B^{(1)}| } .
 \end{split} \end{equation}
  Now, combining \eqref{eq:Bonnesen}, \eqref{eq:Step3,5} and \eqref{eq:Step3,75}, we find that
  \begin{equation*}
   \left( \dfrac{n \, |B_1|^{1/n}\, |\Omega_\tau|^{
 1-1/n}+C\,
  \varepsilon^{1/2}}{\mathcal{H}^{n-1}(\partial B^{(1)})} \right)^{n/(n-1)}- 
  \dfrac{|\Omega_{\tau}|}{|B^{(1)}|}\ge
  \dfrac{\left[| \Omega_\tau|^{1/n}-|B^{(1)}|^{1/n}\right]^n }{ | B^{(1)}| }
  .\end{equation*}
Recalling that~$B^{(1)}$ has radius~$\rho$, hence $|B^{(1)}|=|B_1|\rho^n$ and
$$\mathcal{H}^{n-1}(\partial B^{(1)})=
\mathcal{H}^{n-1}(\partial B_1)\rho^{n-1}
=n|B_1|\rho^{n-1},$$ 
it thus follows that
  \begin{equation}\label{eq:Step3,8}
  \left(  |\Omega_\tau|^{
 1-1/n}+C\,\varepsilon^{1/2} \right)^{n/(n-1)}- |\Omega_{\tau}|\ge
\left[| \Omega_\tau|^{1/n}-|B^{(1)}|^{1/n}\right]^n 
  .\end{equation}
  We also point out that
\begin{equation}\label{eq:Omega_Tau}
\dfrac{|\Omega|}{2}	\leq |\Omega_\tau| \leq |\Omega|,
\end{equation}  
 for sufficiently small $\tau$ and therefore
  \begin{eqnarray*}
&&  \left(  |\Omega_\tau|^{
 1-1/n}+C\,\varepsilon^{1/2} \right)^{n/(n-1)}
  =|\Omega_\tau|\left(  1+\frac{C\,\varepsilon^{1/2}}{|\Omega_\tau|^{
 1-1/n}} \right)^{n/(n-1)}\\&&\qquad\le
 |\Omega_\tau|\left(  1+\frac{2^{1-1/n}C\,\varepsilon^{1/2}}{|\Omega|^{
 1-1/n}} \right)^{n/(n-1)}\le|\Omega_\tau|\left(  1+C\varepsilon^{1/2}\right),
  \end{eqnarray*}
  up to renaming~$C$ in the last inequality.
  
  Combining this and~\eqref{eq:Step3,8} we gather that
$$ \left[| \Omega_\tau|^{1/n}-|B^{(1)}|^{1/n}\right]^n \le C\,|\Omega_\tau| \varepsilon^{1/2}.$$
This, the Bernoulli inequality and~\eqref{eq:Omega_Tau} entail that
$$ | \Omega_\tau|^{1/n}-|B^{(1)}|^{1/n}\ \le C\,|\Omega_\tau|^{1/n} \varepsilon^{1/(2n)},$$
which gives \eqref{eq:Goal1}.  
 \end{proof}
\begin{appendix}

\section{Convex sets, remarks on the literature, and optimality of Lemma~\ref{ADDI}}\label{AER}

 In the literature, it seems to be suggested (see e.g. the
 end of Section~2 in~\cite{Melas}) that the result in Lemma~\ref{ADDI} could be improved,
 for instance by posing the following natural question:
 
\begin{problem}\label{MER}
Let~$\varepsilon>0$. If~$\Omega\subset\R^n$ is bounded
and convex, contains the unit ball~$B_1$ and
\begin{equation}\label{B1hy} |B_1|\ge (1-\varepsilon)|\Omega|,\end{equation}
is it true that there exists a ball~$B_*$ such that~$\Omega\subseteq B_*$ and
\begin{equation}\label{B2hy} |\Omega|\ge (1-C\varepsilon)|B_*|\end{equation}
for some constant~$C>0$?\end{problem}

Here we show that the answer to Problem~\ref{MER} is {\em negative}.
We provide {\em two counterexamples}, one closely related to the proof of Lemma~\ref{ADDI},
and one in which we additionally assume that the set~$\Omega$
is uniformly convex and its boundary is of class~$C^\infty$.
\medskip

\noindent{\bf Counterexample 1.}
Let~$n\ge2$. Let~$\varepsilon>0$ be small and
$$ \delta:=\varepsilon^{\frac2{ n+1}}.$$
Let~$P:=(0,\dots,0,1+\delta)$
and~$\Omega$ be the convex hull of~$B_1\cup P$, see Figure~\ref{HULL}.

  \begin{center}
  \begin{figure}[ht]
   \includegraphics[scale=0.33]{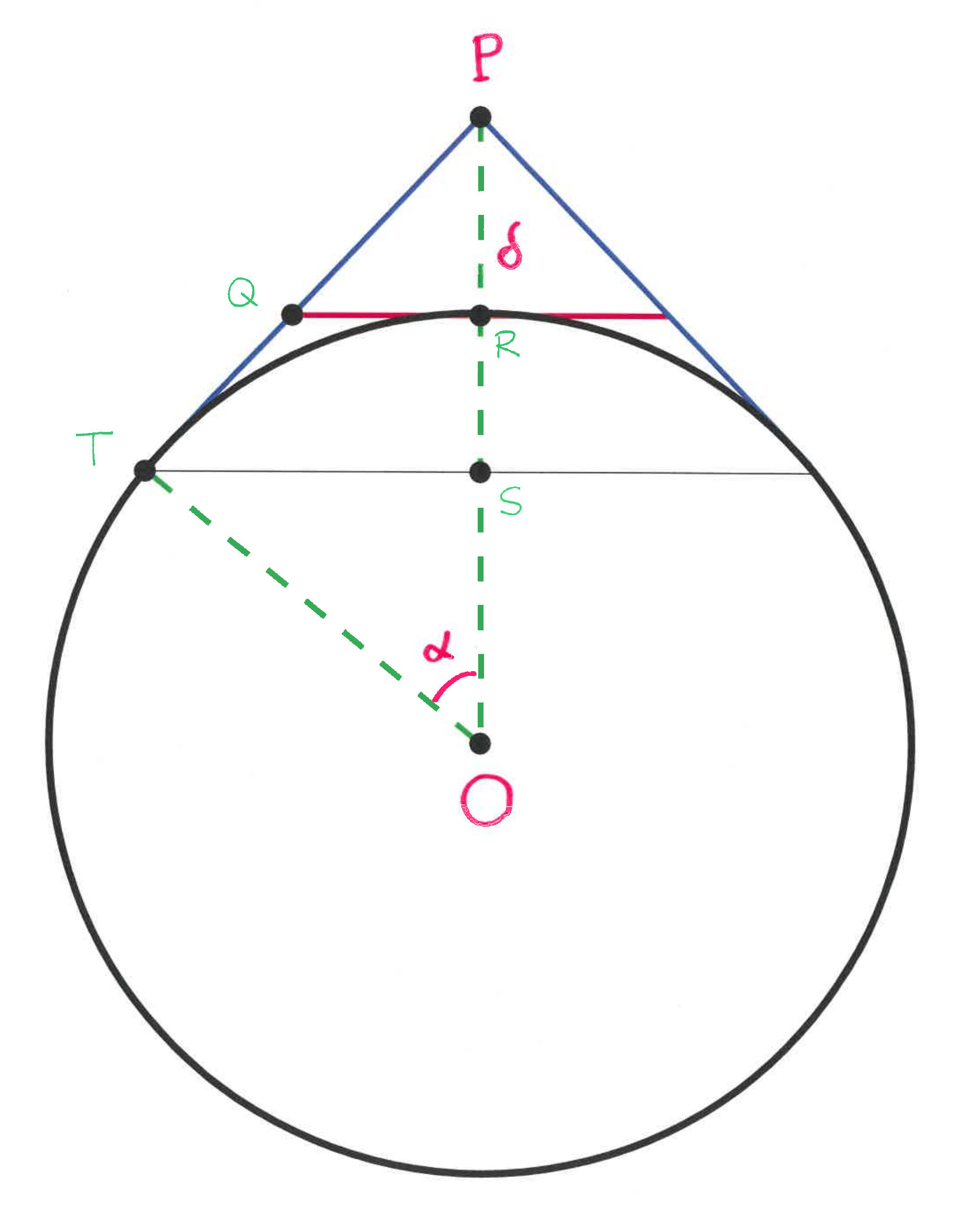}
    \caption{\sl \footnotesize The convex hull of~$B_1\cup P$.} \label{HULL}
   \end{figure}
  \end{center}

We claim that~\eqref{B1hy} holds true.
Indeed, considering Figure~\ref{HULL}, we have 
that~$\overline{OR}=1=\overline{OT}$ and~$\overline{PR}=\delta$.
As a result, we see that~$\overline{OP}=\overline{OR}+\overline{PR}=1+\delta$ and
$$ \overline{PT}=\sqrt{ \overline{OP}^2-\overline{RO}^2}=
\sqrt{ (1+\delta)^2-1}=\sqrt{2\delta+\delta^2}\in[\sqrt\delta,2\sqrt\delta].$$
We also remark that the triangles~$\stackrel{\triangle}{PTO}$,
and~$\stackrel{\triangle}{PST}$ are similar and accordingly
$$ \frac{\overline{ST}}{\overline{PT}}=\frac{\overline{OT}}{\overline{OP}}\qquad{\mbox{and}}\qquad
\frac{\overline{PS}}{\overline{PT}}=\frac{\overline{PT}}{\overline{OP}}
.$$
These identities entail that
\begin{eqnarray*}
&&
{\overline{ST}}=\frac{ {\overline{OT}}\;\; {\overline{PT}} }{\overline{OP}}\le\frac{2{\sqrt\delta}}{1+
 \delta}\le4{\sqrt\delta}\\
{\mbox{and }}&&
{\overline{PS}}=\frac{\overline{PT}^2}{\overline{OP}}=\frac{2\delta+\delta^2}{1+\delta}
\le4\delta.
\end{eqnarray*}
As a result, if
$$ {\mathcal{Q}}:=P+[-4{\sqrt\delta},4{\sqrt\delta}]^{n-1}\times[-4\delta,4\delta],$$
we find that~$\Omega\setminus B_1\subseteq{\mathcal{Q}}$.

{F}rom this, it follows that
\begin{eqnarray*}
&&|\Omega|\le|B_1|+|\Omega\setminus B_1|\le|B_1|+|{\mathcal{Q}}|=
|B_1|+(8{\sqrt\delta})^{n-1}(8\delta)\\&&\qquad\qquad\le |B_1|(1+C\delta^{\frac{n+1}2})=|B_1|(1+C\varepsilon)
\end{eqnarray*}
for some~$C>0$, which gives~\eqref{B1hy} (up to renaming~$\varepsilon$).

Now, consider a ball~$B_*$ such that~$\Omega\subseteq B_*$.
Since~
$$\text{$(0,\dots,0,1+\delta)\in\overline\Omega$ and~$(0,\dots,0,-1)\in\overline\Omega$},$$ 
we have that the diameter of~$\Omega$
is at least~$2+\delta$, hence the radius of~$B_*$ is at least~$1+\frac\delta2$ and thus, using~\eqref{B1hy},
\begin{equation*}
\begin{split}&
|B_*|\ge |B_1|\left(1+\frac\delta2\right)^n\ge |B_1| \left(1+\frac\delta2\right)\ge
\left(1+\frac\delta2\right)\left(1-\varepsilon\right)|\Omega|\\&\qquad=
\left(1+\frac{\delta}2\right)\left(1-\delta^{\frac{n+1}{2}}\right)|\Omega|=
\left(1+\frac{\delta}2+O\left(\delta^{\frac{n+1}{2}}\right)\right)|\Omega|\\&\qquad=
\left(1+\frac{\delta}2+O\left(\delta^{\frac{3}{2}}\right)\right)|\Omega|\ge
\left(1+\frac{\delta}4\right)|\Omega|.
\end{split}
\end{equation*}
This yields that~\eqref{B2hy} is not satisfied in this case, since otherwise
\begin{eqnarray*}
&&1=\frac{|\Omega|}{|\Omega|}\ge\frac{\big(1-\varepsilon\big)|B_*|}{|B_*|/\left(1+\frac{\delta}4\right)}=
\left(1+\frac{\delta}4\right)\left(1-\delta^{\frac{n+1}{2}}\right)\\&&\qquad\qquad=
\left(1+\frac{\delta}4\right) \left( 1+O\left( \delta^{\frac{3}{2}} \right)\right)=1+\frac{\delta}4+O\left( \delta^{\frac{3}{2}} \right)\ge
1+\frac{\delta}8>1,
\end{eqnarray*}
which is a contradiction.\bigskip

\noindent{\bf Counterexample 2.}
For simplicity, we take here~$n=2$ (the case~$n>2$ can be obtained by rotations of the example constructed
in a given plane). Let~$\varepsilon>0$ be small and
$$ \delta:=\varepsilon^{\frac2{ 3}}.$$
We construct here a counterexample of Problem~\ref{MER}
even under the additional assumption that~$\Omega$ is uniformly convex with boundary of class~$C^\infty$. For this,
let~$f\in C^\infty_0([-1,1],\,[0,1])$ with~$f(0)=1$ and
$$ (-\pi,\pi] \ni\vartheta\mapsto g(\vartheta):=1+c\delta f\left(\frac\vartheta{\sqrt\delta}\right),$$
with
$$ c:=\frac{1}{4(1+\|f\|_{C^2(\R)})}.$$
We consider the set
$$ \Omega:=\Big\{
(\rho\cos\vartheta,\rho\sin\vartheta),\;\,
\vartheta\in(-\pi,\pi],\;\, \rho\in[0,g(\vartheta))
\Big\}.$$
We observe that~$g(\vartheta)\in[1,1+\delta]$ for all~$\vartheta\in(-\pi,\pi]$, hence~$B_1\subseteq\Omega$,
and
\begin{equation}\label{WHE}
{\mbox{$g(\vartheta)=1$ whenever~$|\vartheta|>\sqrt\delta$.}}\end{equation}

We let~$\kappa(\vartheta)$ be the curvature of~$\partial\Omega$ at the point~$
(g(\vartheta)\cos\vartheta,\,g(\vartheta)\sin\vartheta)$. Then,
\begin{eqnarray*}
\kappa(\vartheta)&=&
\frac{2\dot{g}^2(\vartheta)-g(\vartheta) \,\ddot{g}(\vartheta)+ g^2(\vartheta)}{\Big(\dot{g}^2(\vartheta)+g^2(\vartheta)\Big)^{\frac{3}{2}}}\\&=&
\frac{2c^2\delta \dot{f}^2\left(\frac\vartheta{\sqrt\delta}\right)
-c g(\vartheta) \,
\ddot{f}\left(\frac\vartheta{\sqrt\delta}\right)+ g^2(\vartheta)}{\left(
c^2\delta \dot{f}^2\left(\frac\vartheta{\sqrt\delta}\right)
+g^2(\vartheta)\right)^{\frac{3}{2}}}.
\end{eqnarray*}
Hence, if
$$ \xi(\vartheta):=c g(\vartheta) \,
\ddot{f}\left(\frac\vartheta{\sqrt\delta}\right),$$
we have that
\begin{equation}\label{4LAXI} |\xi(\vartheta)|\le c (1+\delta) \,\|f\|_{C^2(\R)}\le
2c \,\|f\|_{C^2(\R)}\le\frac12\end{equation}
and
\begin{eqnarray*}
\kappa(\vartheta)&=&
\frac{O(\delta)
-\xi(\vartheta)+ (1+O(\delta))^2}{\left(
O(\delta)+(1+O(\delta))^2\right)^{\frac{3}{2}}}\\
&=&
\frac{1
-\xi(\vartheta)+ O(\delta)}{\left(1+
O(\delta)\right)^{\frac{3}{2}}}\\&=&1
-\xi(\vartheta)+ O(\delta).
\end{eqnarray*}
{F}rom this relation and~\eqref{4LAXI}, we conclude that
$$ \kappa(\vartheta)\in\left[\frac14,2\right],$$
hence~$\Omega$ is uniformly convex.

Moreover, using polar coordinates and~\eqref{WHE},
\begin{eqnarray*}
|\Omega\setminus B_1|&=&\int_{-\pi}^\pi \left[\int_1^{g(\vartheta)}\rho\, d\rho \right]\,d\vartheta
\\&\le& \int_{-\sqrt\delta}^{\sqrt\delta} \left[\int_1^{1+\delta} 2\, d\rho \right]\,d\vartheta\\&\le&
4\,\delta^{\frac32}\\&=&4\,\varepsilon,
\end{eqnarray*}
and therefore
$$ |B_1|\ge|\Omega|-4\varepsilon=\left(1-\frac{4\varepsilon}{|\Omega|}\right)|\Omega|,$$
which shows that~\eqref{B1hy} holds true (up to renaming~$\varepsilon$).

Now, consider a ball~$B_*$ such that~$\Omega\subseteq B_*$.
Since
$$(1+c\delta,0)=
(g(0)\cos 0, g(0)\sin0)\in\overline\Omega$$
and~$(-1,0)=
(g(\pi)\cos \pi, g(\pi)\sin\pi)\in\overline\Omega$, we have that the diameter of~$\Omega$
is at least~$2+c\delta$, hence the radius of~$B_*$ is at least~$1+\frac{c\delta}2$.
Thus, using~\eqref{B1hy},
\begin{equation*}
\begin{split}&
|B_*|\ge |B_1|\left(1+\frac{c\delta}2\right)^n\ge |B_1| \left(1+\frac{c\delta}2\right)\ge
\left(1+\frac{c\delta}2\right)\left(1-\varepsilon\right)|\Omega|\\&\qquad=
\left(1+\frac{c\delta}2\right)\left(1-\delta^{\frac{n+1}{2}}\right)|\Omega|=
\left(1+\frac{c\delta}2+O\left(\delta^{\frac{n+1}{2}}\right)\right)|\Omega|\\&\qquad=
\left(1+\frac{c\delta}2+O\left(\delta^{\frac{3}{2}}\right)\right)|\Omega|\ge
\left(1+\frac{c\delta}4\right)|\Omega|.
\end{split}
\end{equation*}
This yields that~\eqref{B2hy} is not satisfied in this case, since otherwise
\begin{eqnarray*}
&&1=\frac{|\Omega|}{|\Omega|}\ge\frac{\big(1-\varepsilon\big)|B_*|}{|B_*|/\left(1+\frac{c\delta}4\right)}=
\left(1+\frac{c\delta}4\right)\left(1-\delta^{\frac{n+1}{2}}\right)\\&&\qquad\qquad=
\left(1+\frac{c\delta}4\right) \left( 1+O\left( \delta^{\frac{3}{2}} \right)\right)=1+\frac{c\delta}4+O\left( \delta^{\frac{3}{2}} \right)\ge
1+\frac{c\delta}8>1,
\end{eqnarray*}
which is a contradiction.

\section{$C^{2,\alpha}$-regularity for $s< 1/2$}\label{AppC2alpha}
For completeness, we present here
an explicit proof of Theorem \ref{thm:solvabLL}
in the case $s\in (0,1/2)$. In this situation
the action of the fractional operator is better behaved since
it does not produce boundary singularities on functions
that are smooth (or even just Lipschitz) up to the boundary
and with zero external datum. This fact makes
the proof technically easier since it allows one to ``reabsorb''
the fractional operator into the source term of the equation.
For this reason, we thought it could be of some interest, at least
for some readers, to find here a self-contained
result with its own proof.
The precise statement is the following:

 \begin{theorem} 
   Let~$s\in (0,1/2)$ and~$\alpha\in (0,1)$ be such that
  \begin{equation} \label{eq.assumptionas}
    \alpha+2s < 1.
   \end{equation}
Suppose that $\de\Omega$ is of class $C^{2,\alpha}$.
  If $f\in C^{\alpha}(\overline{\Omega})$ and if $u_f\in\mathbb{X}(\Omega)$
  denotes the unique weak solution of $(\mathrm{D})_{f}$
  \emph{(}according to Theorem \ref{thm:existenceBasic}\emph{)}, then
  $$u_f\in C_b(\R^n)\cap C^{2,\alpha}(\overline{\Omega}).$$
  In particular, $u_f$ is a \emph{classical solution} of  $(\mathrm{D})_{f}$.
 \end{theorem}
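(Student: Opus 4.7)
The plan is to exploit the structural feature of the regime $s<1/2$ already emphasized in the introductory paragraph of this appendix: the fractional Laplacian maps globally bounded, Lipschitz functions into $C^{0,1-2s}$. This will let me treat $(-\Delta)^s u_f$ as a H\"older-regular source term and reduce the problem to a classical Dirichlet problem for $-\Delta$, for which Schauder theory delivers $C^{2,\alpha}$-regularity up to the boundary.

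I would first upgrade $u_f$ to a globally Lipschitz function on $\R^n$. Since $f\in C^\alpha(\overline{\Omega})\subset L^\infty(\Omega)$ and $\partial\Omega$ is of class $C^{2,\alpha}\subset C^{1,1}$, Theorem~\ref{thm:W2pBL} gives $u_f\in C^{1,\beta}(\overline{\Omega})$ for some $\beta\in(0,1)$. Because $u_f$ vanishes pointwise on $\partial\Omega$ and is identically zero outside $\Omega$, its zero-extension to $\R^n$ is globally bounded and Lipschitz, with Lipschitz constant controlled by $\|u_f\|_{C^{1,\beta}(\overline{\Omega})}$.

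Next I would prove $(-\Delta)^s u_f\in C^{0,1-2s}(\R^n)$ via the standard split-integral argument. Given $x,x'\in\R^n$ with $h:=|x-x'|$, the difference $(-\Delta)^s u_f(x)-(-\Delta)^s u_f(x')$ can be written as a single integral and split at $|y|=h$. On the near region, the Lipschitz second-order bound $|2u_f(z)-u_f(z+y)-u_f(z-y)|\leq 2L|y|$ (valid for any base point $z$) gives an integrand majorized by $L|y|^{1-n-2s}$, whose integral over $\{|y|<h\}$ is comparable to $Lh^{1-2s}$; integrability at the origin is precisely where $s<1/2$ enters. On the far region, rearranging the second-order differences into first-order ones of the form $u_f(x\pm y)-u_f(x'\pm y)$ and applying the Lipschitz bound produces an integrand majorized by $Lh|y|^{-n-2s}$, whose integral over $\{|y|\geq h\}$ is again comparable to $Lh^{1-2s}$. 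Summing, $|(-\Delta)^s u_f(x)-(-\Delta)^s u_f(x')|\leq C h^{1-2s}$. Since $\alpha+2s<1$ forces $\alpha<1-2s$, this yields $(-\Delta)^s u_f\in C^\alpha(\overline{\Omega})$.

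Finally, since $(-\Delta)^s u_f$ is now pointwise defined and H\"older continuous, the standard integration-by-parts identity (valid because the Gagliardo integrand can now be paired classically with test functions $v\in C_0^\infty(\Omega)$) converts the weak equation $\mathcal{B}_{\Omega,s}(u_f,v)=\int_\Omega f v\,dx$ into the statement that $u_f\in H^1_0(\Omega)$ is a weak solution of $-\Delta u=g$ in $\Omega$ with $u=0$ on $\partial\Omega$, where $g$ is an explicit structural combination of $f$ and $(-\Delta)^s u_f$ and belongs to $C^\alpha(\overline{\Omega})$. Classical Schauder theory for the Poisson equation on a domain with $C^{2,\alpha}$-boundary then gives $u_f\in C^{2,\alpha}(\overline{\Omega})$, and Remark~\ref{rem:weakclassicalcfr}(2) promotes $u_f$ to a classical solution of $(\mathrm{D})_f$. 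The only technically delicate step is the H\"older estimate for $(-\Delta)^s u_f$; the subtle point there is that the genuinely globally Lipschitz object is the zero-extension of the interior $C^{1,\beta}$ solution, and this gluing rests on the pointwise vanishing of $u_f$ on $\partial\Omega$.
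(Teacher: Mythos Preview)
Your argument is correct, but it follows a genuinely different route from the paper's proof. The paper does \emph{not} bootstrap from the $C^{1,\beta}$ regularity of Theorem~\ref{thm:W2pBL}. Instead it works directly on the Banach space $\mathbb{B}(\Omega)=\{u\in C(\R^n):u\equiv 0$ outside $\Omega$, $u|_{\overline\Omega}\in C^{2,\alpha}(\overline\Omega)\}$ and proves an a~priori Schauder estimate $\|u\|_{C^{2,\alpha}}\leq C(\|\LL u\|_{C^\alpha}+\sup|u|)$ for the full operator $\LL$, by combining the classical Schauder estimate for $-\Delta$ with a bound $\|(-\Delta)^s u\|_{C^\alpha}\leq \mathbf{c}\,\|u\|_{C^1}$ (obtained via \cite[Prop.~2.1.7]{SILV}) and an interpolation inequality. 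It then runs the \emph{method of continuity} along $\LL_t=\Delta+t(-\Delta)^s$, using surjectivity of $\LL_0=\Delta$ to conclude surjectivity of $\LL_1=\LL$ onto $C^\alpha(\overline\Omega)$, and finally identifies the resulting classical solution with the weak solution $u_f$.

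Your approach is more elementary and more direct: you spend the already-available $C^{1,\beta}$ regularity to make $(-\Delta)^s u_f$ a bona fide $C^\alpha$ source term, and then hand everything to classical Poisson theory. This avoids the continuity-method machinery entirely, but it relies on Theorem~\ref{thm:W2pBL} (hence on the external $W^{2,p}$ theory of \cite{GaMe,BeLi}) as an input. The paper's route is more self-contained in that respect and yields, as a by-product, a Schauder-type estimate for $\LL$ itself; it is also the template one would use to push to more general mixed operators. Both proofs hinge on the same structural fact---that for $s<1/2$ the fractional Laplacian of a globally Lipschitz, bounded function is $C^{0,1-2s}\subset C^\alpha$---which you derive by hand via the split-integral argument and the paper obtains by quoting Silvestre.
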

 \begin{proof}
 We split the proof into different steps.
 \medskip
 
 \textsc{Step I}. We consider the functions space $\mathbb{B}(\Omega)$ defined as follows:
 $$\mathbb{B}(\Omega) := 
  \{u\in C(\R^n):\,\text{$u\equiv 0$ in $\R^n\setminus\Omega$
  and $u|_{\overline{\Omega}}\in C^{2,\alpha}(\overline{\Omega})$}\}
  \subseteq C_b(\R^n).$$
  Then, we claim that there exists
  a constant $\mathbf{c} = \mathbf{c}_{n,s,\alpha,\Omega} > 0$ such that
  \begin{equation} \label{eq.claimCaseI}
   \|(-\Delta)^s u\|_{C^\alpha(\overline{\Omega})}
   \leq \mathbf{c}\|u\|_{C^{1}(\overline{\Omega})}
   <\infty\qquad\forall\,\,u\in \mathbb{B}(\Omega).
  \end{equation}
  In fact, since $u\in \mathbb{B}(\Omega)$ and since, by assumption
  \eqref{eq.assumptionas}, $\beta:=\alpha+2s\in (0,1)$, it 
  is not difficult to recognize that
  $u\in C^{\beta}(\R^n)$, and 
  $$[u]_{C^\beta(\R^n)}
  \leq \mathrm{diam}(\Omega)^{1-\beta} \|u\|_{C^1(\overline{\Omega})}.$$
  As a consequence, since one obviously has
  $\beta = \alpha+2s > 2s$, we are entitled to apply the re\-sult
  in \cite[Prop.\,2.1.7]{SILV}: this gives
  $(-\Delta)^s u\in 
   C^{\alpha}(\R^n)$ and
   \begin{equation} \label{eq.estimseminormDeltas}
   [(-\Delta)^s u]_{C^\alpha(\R^n)}
   \leq c[u]_{C^\beta(\R^n)}
   \leq c\,\mathrm{diam}(\Omega)^{1-\beta}\,\|u\|_{C^1(\overline{\Omega})},
  \end{equation}
  where $c = c_{n,s,\alpha} > 0$ is a constant independent of $u$.
  To complete the proof of \eqref{eq.claimCaseI}, we then turn 
  to estimate the $L^\infty$-norm of $(-\Delta)^s u$
  in terms of
  $\|u\|_{C^{1}(\overline{\Omega})}$. 
  
  First of all we observe that, on account of 
  \eqref{eq.estimseminormDeltas}, for every $x\in\overline{\Omega}$ one has
  \begin{equation} \label{eq.estimsupDeltas}
  \begin{split}
   |(-\Delta)^s u(x)| & \leq [(-\Delta)^s u]_{C^\alpha(\R^n)}\cdot|x-x_0|^\alpha
   + |(-\Delta)^s u(x_0)| \\
   & \leq c\,\mathrm{diam}(\Omega)^{1+\alpha-\beta}\,\|u\|_{C^{1}(\overline{\Omega})}
   + |(-\Delta)^s u(x_0)| \\
   & (\text{setting $\rho := c\,\mathrm{diam}(\Omega)^{1+\alpha-\beta}$}) \\
   & = \rho\,\|u\|_{C^{1}(\overline{\Omega})}
   + |(-\Delta)^s u(x_0)|,
   \end{split}
  \end{equation}
  where $x_0\in\Omega$ is chosen in such a way that
  $$d_0 := \mathrm{dist}(x_0,\de\Omega) = \sup_{x\in\Omega}\big(\mathrm{dist}(x,\de\Omega)\big).$$
  On the other hand, since $u\in\mathbb{B}(\Omega)$
  and $s<1/2$,
  we have the estimate
  \begin{equation} \label{eq.estimDeltasxi}
  \begin{split}
	& |(-\Delta)^s u(x_0)| \leq \int_{\R^n}\frac{|u(x_0)-u(y)|}{|x_0-y|^{n+2s}}\, d y
	= \int_{\R^n}\frac{|u(x_0)-u(x_0-z)|}{|z|^{n+2s}}\, d z
	\\
	& \qquad \leq \int_{\{|z|\leq d_0\}}\frac{1}{|z|^{n+2s}}\,
	|\langle\nabla u(x_0-\tau z),z\rangle|\, d z
	\\
	& \qquad\qquad + 2\|u\|_{L^\infty(\R^n)}\int_{\{|z|>d_0\}}
	\frac{1}{|z|^{n+2s}}\, d z \\
	& \qquad
	\leq \sqrt{n}\|u\|_{C^1(\overline{\Omega})}
	\int_{\{|z|\leq d_0\}}\frac{1}{|z|^{n+2s-1}}\, d z
	\\
	& \qquad\qquad + 2\|u\|_{L^\infty(\R^n)}\int_{\{|z|>d_0\}}
	\frac{1}{|z|^{n+2s}}\, d z \\[0.2cm]
	& \qquad
	= \kappa(d_0^{1-2s}+d_0^{-2s})\|u\|_{C^1(\overline{\Omega})},
  \end{split}
  \end{equation}
  where $\kappa = \kappa_{n,s} > 0$ is another constant which does not depend on
  $u$. Gathering to\-ge\-ther \eqref{eq.estimseminormDeltas}, 
  \eqref{eq.estimsupDeltas} and 
  \eqref{eq.estimDeltasxi}, we finally obtain
  \begin{align*}
   \|(-\Delta)^s u\|_{C^\alpha(\overline{\Omega})}
   & = \|(-\Delta)^s u\|_{L^\infty(\Omega)}
   + [(-\Delta)^s u]_{C^\alpha(\overline{\Omega})}
   \\
   & \leq \big(c\,\mathrm{diam}(\Omega)^{1-\beta}
   + \rho\big)\|u\|_{C^{1}(\overline{\Omega})} \\
   & \qquad\quad
   + \kappa\big(d_0^{2-2s}+d_0^{-2s}\big)\|u\|_{C^1(\overline{\Omega})} \\
   & \leq \mathbf{c}\,\|u\|_{C^1(\overline{\Omega})}
  \end{align*}
  which is exactly the claimed \eqref{eq.claimCaseI}. We explicitly point out that
  the constant $\mathbf{c}$ only depends on $n,s,\alpha$ and $\Omega$ (as the same is true of 
  $c$).
 \medskip
 
 \textsc{Step II}. 
 In this second step, we establish the following facts:
 \begin{enumerate}
   \item $\LL u\in C^{\alpha}(\overline{\Omega})$ for every $u\in\mathbb{B}(\Omega)$;
   \vspace*{0.02cm}
   
   \item there exists a constant $C = C(n,\alpha,s) > 0$ such that
  \begin{equation} \label{eq.schauderLL}
   \|u\|_{C^{2,\alpha}(\overline{\Omega})}
   \leq C\,\big(\|\LL u\|_{C^\alpha(\overline{\Omega})}
   + \sup_{\Omega}|u|\big)\qquad
   \text{for all $u\in\mathbb{B}(\Omega)$}.
  \end{equation}
  \end{enumerate}
  As regards assertion (1), it is a direct consequence
  of \eqref{eq.claimCaseI}, together with the fact that
  $\Delta u\in C^\alpha(\overline{\Omega})$ if 
  $u\in\mathbb{B}(\Omega)\subseteq C^{2,\alpha}(\overline{\Omega})$.
  We now turn to prove assertion (2).
  
  To this end we first notice that, since $\de\Omega$ is of class $C^{2,\alpha}$,
  we are entitled to apply \cite[Thm.\,6.14]{GT}: 
  there exists a constant $C = C(n,\alpha) > 0$
  such that
  \begin{equation} \label{eq.schauderLap}
   \|u\|_{C^{2,\alpha}(\overline{\Omega})}
   \leq C\,\big(\|\Delta u\|_{C^\alpha(\overline{\Omega})}
   + \sup_{\Omega}|u|\big),
  \end{equation}
  for every function $u\in C^{2,\alpha}(\overline{\Omega})$ satisfying
  $u\equiv 0$ on $\de\Omega$.
  Then, by combining
  \eqref{eq.claimCaseI} with \eqref{eq.schauderLap},
  we obtain the following chain of inequalities:
  \begin{equation} \label{eq.penultimaEstimCaseI}
  \begin{split}
   \|u\|_{C^{2,\alpha}(\overline{\Omega})} 
   & 
   \leq C\,\big(\|\Delta u\|_{C^\alpha(\overline{\Omega})}
   + \sup_{\Omega}|u|\big) \\
   & = C\,\big(\|\LL u - (-\Delta)^s u\|_{C^\alpha(\overline{\Omega})}
   + \sup_{\Omega}|u|\big) \\
   & \leq C\,\big(\|\LL u\|_{C^\alpha(\overline{\Omega})}
   + \|(-\Delta)^s u\|_{C^\alpha(\overline{\Omega})}
   + \sup_{\Omega}|u|\big) \\
   & \leq C'\,\big(\|\LL u\|_{C^\alpha(\overline{\Omega})}
   + \|u\|_{C^{1}(\overline{\Omega})}
   + \sup_{\Omega}|u|\big),
  \end{split}
  \end{equation}
  holding true for every $u\in \mathbb{B}(\Omega)$.
  Now, owing again to the regularity of $\de\Omega$,
  we can invoke the
  \emph{global} interpolation
  inequality contained in, e.g., \cite[Chap.\,6]{GT}: there exists a constant
  $\theta > 0$, independent of $u$, such that
  \begin{equation*}
   \|u\|_{C^{1}(\overline{\Omega})}
   \leq \frac{1}{2C'}\|u\|_{C^{2,\alpha}(\overline{\Omega})}
  + \theta\sup_{\Omega}|u|
  \end{equation*}
  This, together with \eqref{eq.penultimaEstimCaseI},
  immediately gives the desired \eqref{eq.schauderLL}.
  \medskip
  
  \textsc{Step III}. In this last step, we complete the proof of the theorem
  by using the so-called \emph{method of continuity}
  (see, e.g., \cite[Thm.\,5.2]{GT}).
  To this end, we first notice that $\mathbb{B}(\Omega)$ is endowed
  with a structure of Banach space by the norm
  $$\|u\|_{\mathbb{B}(\Omega)} := \|u\|_{C^{2,\alpha}(\overline{\Omega})}\qquad
  \forall\,\,u\in\mathbb{B}(\Omega).$$
  Moreover, for every $0\leq t\leq 1$ we define
  $$\LL_t := (1-t)\Delta + t\LL
  = \Delta + t(-\Delta)^s.$$
  Owing to (1)-(2) in Step II, we derive that
  $\LL_t$ maps $\mathbb{B}(\Omega)$ into $C^\alpha(\overline{\Omega})$, and
  \begin{equation} \label{eq.estimGrezza}
  \begin{split}
  \|u\|_{\mathbb{B}(\Omega)}
  & = \|u\|_{C^{2,\alpha}(\overline{\Omega})}
  \leq C\,\big(\|\LL_t u\|_{C^{\alpha}(\overline{\Omega})}
  + \sup_{\Omega}|u|\big),\quad\forall\,\,u\in\mathbb{B}(\Omega),
  \end{split}
  \end{equation}
  where $C > 0$ is a suitable constant \emph{independent of $u$ and $t$}.
  On the other hand, by carefully scrutinizing the proof of
  \cite[Thm.\,4.7]{BDVV}, it is easy to see that
  \begin{equation} \label{eq.estimSupu}
   \sup_{\Omega}|u| = \sup_{\R^n}|u| \leq \kappa\,\|\LL_t u\|_{L^\infty(\Omega)}
   \leq \kappa\,\|\LL_t u\|_{C^{\alpha}(\overline{\Omega})},
  \end{equation}
  where $\kappa > 0$ is another constant independent of $u$ and $t$.
  Thanks to \eqref{eq.estimGrezza}-\eqref{eq.estimSupu}, we are then entitled
  to 
  apply the method of continuity in this setting: indeed,
  \begin{itemize}
  \item $\mathbb{B}(\Omega)$ and $C^{\alpha}(\overline{\Omega})$ are Banach spaces;
  \item $\LL_0,\,\LL_1$ are linear and bounded from $\mathbb{B}(\Omega)$ into 
  $C^{\alpha}(\overline{\Omega})$ (see \eqref{eq.claimCaseI});
  \item there exists a constant $\hat{C} > 0$ such that
  \begin{equation*}
   \|u\|_{\mathbb{B}(\Omega)}\leq \hat{C}\|\LL_t\|_{C^{\alpha}(\overline{\Omega})}
   \qquad\text{for every $u\in\mathbb{B}(\Omega)$ and $t\in[0,1]$},
   \end{equation*}
  \end{itemize}
  As a consequence, since
  $\LL_0 = \Delta$ is surjective, we deduce that also
   $\LL_1 = \LL$  is \emph{surjective}:
   for every $f\in C^\alpha(\overline{\Omega})$ there exists a (unique)
   $\hat{u}_f\in\mathbb{B}(\Omega)$ such that
   \begin{equation} \label{eq:LLhatufpointwise}
    \LL \hat{u}_f = f\quad\text{pointwise in $\Omega$}.
   \end{equation}
  We explicitly notice that, since $\hat{u}_f\in\mathbb{B}(\Omega)$, one has
  $\hat{u}_f\in C_b(\R^n)\cap C^{2,\alpha}(\overline{\Omega})$ and
  $\hat{u}_f\equiv 0$ on $\R^n\setminus\Omega$; thus, by 
  \eqref{eq:LLhatufpointwise} we derive that
  $\hat{u}_f$ is a \emph{classical solution} of 
  $(\mathrm{D})_{f}$.
  In view of these facts, to complete the proof we are left 
  to show that 
  $$\text{$\hat{u}_f = u_f$ a.e.\,in
  $\R^n$}.$$ 
  To this end we first notice that, since $\hat{u}_f\in C^{2,\alpha}(\overline{\Omega})$
  and since $\hat{u}_f\equiv 0$ on $\de\Omega$, we surely have
  $\hat{u}_f\in H_0^1(\Omega)$; this, together with \eqref{eq.indetifyXHzero}, implies that
  \begin{equation} \label{eq:hatufXOmega}
   \hat{u}_f\in\mathbb{X}(\Omega).
   \end{equation}
  Since $\hat{u}_f$ is a classical solution of 
  $(\mathrm{D})_{f}$, from \eqref{eq:hatufXOmega} and Remark \ref{rem:weakclassicalcfr} we
  infer that $\hat{u}_f$ is also a \emph{weak solution} of problem $(\mathrm{D})_{f}$;
  on the other hand, 
  since $u_f$ is the \emph{unique} weak solution this problem, we
  conclude that
  $\hat{u}_f \equiv u_f$ a.e.\,in $\R^n$, as desired.
 \end{proof}
 
 \textbf{Acknowledgments.} We would like to thank Professor
 Lorenzo Brasco for pointing 
 to our attention the paper \cite{Avila} and for some interesting and pleasant
 discussions.
\end{appendix}

\vfill
\end{document}